\newtheorem{theorem}{Theorem}[section]
\newtheorem{lemma}[theorem]{Lemma}
\newtheorem{corollary}[theorem]{Corollary}
\newtheorem{proposition}[theorem]{Proposition}
\theoremstyle{definition}
\theoremstyle{remark}
\newtheorem{remark}[theorem]{Remark}
\numberwithin{equation}{section}
\author{Xiaoyan Zhou}
\author{Junsheng Fang}
\address{Xiaoyan Zhou}\address{School of Mathematical Sciences, Dalian University of Technology. Dalian~{116024}. China}
\email{doctoryan@mail.dlut.edu.cn}
\address{Junsheng Fang}\address{School of Mathematical Sciences, Dalian University of Technology. Dalian~{116024}. China}
\email{junshengfang@hotmail.com}
\keywords{II$_1$ factors, finite von Neumann algebras, relative amenability, trace preserving normal unital completely positive maps, Haagerup property, weak Haagerup property, weak exactness}
\subjclass{Primary 46L10, Secondary 47A58.}
\begin{document}
\title[]{A note on relative amenability of finite von Neumann algebras}

\maketitle

\begin{abstract}
Let $M$ be a finite von Neumann algebra (resp. a type II$_{1}$ factor) and let $N\subset M$ be a II$_{1}$ factor (resp. $N\subset M$ have an atomic part). We prove that the inclusion $N\subset M$ is amenable implies the identity map on $M$ has an approximate factorization through $N\otimes M_m(\mathbb{C})$ via trace preserving normal unital completely positive maps, which is a generalization of a result of Haagerup. We also prove two permanence properties for amenable inclusions. One is weak Haagerup property, the other is weak exactness.
\end{abstract}
\maketitle

\section{Introduction}
To study operator algebras analogue of the rigidity phenomena in representation of groups and ergodic theory, Connes \cite{connes-80, connes-82, connes-jones}
introduced the key concept of correspondences between two von Neumann algebras, which can be thought of as the representation theory for von Neumann algebras.
He also observed that there are many ways to look at these correspondences. For example, we can construct a correspondence $H_{\phi}$ from a normal completely positive map $\phi$ (on a finite von Neumann algebra) using Stinespring dilation and vice versa. Later on, Popa \cite{popa} systematically
developed the theory of correspondences to get new insights in the structure of von Neumann algebras, especially in the study of type II$_1$ factors.

In this paper, we are interested in a relative notion of amenability Popa introduced using the correspondence framework. Recall that for a von Neumann subalgebra $N$ of a finite von Neumann algebra $M$, we say that the inclusion $N\subset M$ is \emph{amenable} (or $M$ is \emph{amenable relative} to $N$, or $N$ is \emph{co-amenable} in $M$) if $H_{id}$ is weakly contained in $H_{E_N}$, where $E_N$ is the trace preserving normal conditional expectation from $M$ onto $N$. Here are some examples of amenable inclusions. If $M$ is a finite von Neumann algebra, then $M$ is amenable if and only if the inclusion $\mathbb{C}1 \subset M$ is amenable. If $N\subset M$ is an inclusion of II$_{1}$ factors, and the Jones' index $[M:N]<\infty$, then the inclusion $N \subset M$ is amenable. If $M$ is a cocycle crossed product of a finite von Neumann algebra $N$ by a cocycle action of a discrete group $G$, then the inclusion $N \subset M$ is amenable if and only if $G$ is an amenable group. If $N$ is a finite von Neumann algebra and $G\curvearrowright N$ is a weakly compact action, then the inclusion L$G\subset N\rtimes G$ is amenable by \cite[Proposition 3.2]{op}.

There are some permanence results for amenable inclusions.  B\'edos \cite{erik}  proved that if $G$ is a discrete amenable group with a free action $\alpha$ on a von Neumann algebra $M$ and $M$ has property $\Gamma$, then $M\rtimes_{\alpha}G$ has property $\Gamma$.  The author also proved that if $G$ is a discrete amenable group with a free action $\alpha$ on a type II$_{1}$ factor $M$ and $M$ is McDuff, then $M\rtimes_{\alpha}G$ is McDuff. Bannon and Fang \cite{fang} proved that if the inclusion of finite von Neumann algebras $N\subset M$ is amenable and $N$ has the Haagerup property, then $M$ also has the Haagerup property.

Just as many other conditions are equivalent to amenability, Popa showed the relative amenability can be characterized by the corresponding ``relative type'' conditions, see \cite[Theorem 3.23]{popa}. Since semidiscreteness is equivalent to amenability for von Neumann algebras, Popa asked whether a good analogue notion exists for relative amenability. This was answered affirmatively by Mingo in \cite{mingo} for finite von Neumann algebras using normal completely positive maps, which is close to the definition of semidiscreteness in spirit. More precisely, he showed that for a
finite von Neumann algebra $M$ and two normal completely positive maps $\phi$, $\varphi: M\rightarrow M$, $H_{\phi}$ is weakly contained in $H_{\varphi}$ if and only if $\phi$ can be approximately factored by $\varphi$. Later on, Anantharaman-Delaroche extended Mingo's result to all von Neumann algebras using correspondences in \cite{anan}.


Applying Mingo's above result and the definition of approximate factorization, it is not difficult to deduce the following proposition.

\begin{proposition}[see Proposition \ref{1st theorem}]\label{starting prop from Mingo}
Let $M$ be a finite von Neumann algebra with a faithful normal trace $\tau$, and let $N\subset M$ be a von Neumann subalgebra. If the inclusion $N\subset M$
is amenable, then there exists a net of normal u.c.p. maps $\varphi_{i}:M\rightarrow M_{n_{i}}(\mathbb{C})
\otimes N$, a net of normal u.c.p. maps $\phi_{i}:M_{n_{i}}(\mathbb{C})\otimes N\rightarrow M$ and
a net of positive elements $h_{i}\in M_{n_{i}}(\mathbb{C})\otimes N$ such that for all $x\in M$, $y\in M_{n_{i}}(\mathbb{C})\otimes N$,
\begin{enumerate}
\item $\phi_{i}\circ\varphi_{i}(x)\rightarrow x$ in the $\|\cdot\|_{2}$-norm topology,
\item $\tau\circ\phi_{i}(y)=(tr_{n_{i}}\otimes\tau)(h_{i}y)$.
\end{enumerate}
\end{proposition}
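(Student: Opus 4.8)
The plan is to chain together the two equivalences \emph{amenability} $\Leftrightarrow$ \emph{weak containment} $\Leftrightarrow$ \emph{approximate factorization}, and then make the resulting factorization explicit so that the intermediate algebra is $M_{n_i}(\mathbb{C})\otimes N$. By definition, amenability of $N\subset M$ means that $H_{id}$ is weakly contained in $H_{E_N}$, where $E_N\colon M\to N$ is the trace-preserving conditional expectation (which exists since $M$ is finite). By Mingo's theorem this is exactly the statement that $id_M$ can be approximately factored by $E_N$. The first step is purely to invoke these two facts; the real work is to read off maps $\varphi_i,\phi_i$ and densities $h_i$ of the required shape from the factorization.

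For the second step I would unwind the weak containment at the level of matrix coefficients. The identity correspondence $H_{id}$ is the cyclic bimodule generated by $\widehat 1$, with coefficients $\langle x\,\widehat 1\,y,\widehat 1\rangle=\tau(xy)$, whereas a vector $\eta=\sum_{k=1}^{n}a_k\otimes b_k\in H_{E_N}$ satisfies $\langle x\eta y,\eta\rangle=\sum_{k,l}\tau\big(a_l^{*}xa_k\,E_N(b_k y b_l^{*})\big)$. Weak containment therefore furnishes, for each finite $F\subset M$ and $\varepsilon>0$, finite families $\{a_k\},\{b_k\}\subset M$ with $\big|\tau(xy)-\sum_{k,l}\tau(a_l^{*}xa_k E_N(b_k y b_l^{*}))\big|<\varepsilon$ for $x,y\in F$. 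I then set $\varphi_i(y)=[E_N(b_k y b_l^{*})]_{k,l}\in M_{n_i}(\mathbb{C})\otimes N$ and $\phi_i([z_{kl}])=\sum_{k,l}a_k z_{kl}a_l^{*}\in M$, both normal completely positive. By trace cyclicity $\tau\big(x\,\phi_i\circ\varphi_i(y)\big)=\sum_{k,l}\tau(a_l^{*}xa_k E_N(b_k y b_l^{*}))\to\tau(xy)$, i.e. $\phi_i\circ\varphi_i(y)\to y$ weakly; a standard Hahn--Banach/convexity argument, passing to convex combinations (which respect the block-diagonal matrix structure and the affine dependence of the densities), then upgrades this to $\|\cdot\|_2$-convergence, giving property (1).

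Property (2) falls out of the same data by a direct computation: for $z=[z_{kl}]\in M_{n_i}(\mathbb{C})\otimes N$ one has $\tau(\phi_i(z))=\sum_{k,l}\tau(a_l^{*}a_k z_{kl})=\sum_{k,l}\tau\big(E_N(a_l^{*}a_k)z_{kl}\big)$, the last equality using $\tau=\tau\circ E_N$ and the $N$-bimodularity of $E_N$ together with $z_{kl}\in N$. Matching this against $(tr_{n_i}\otimes\tau)(h_i z)$ forces $h_i=n_i\,(id_{n_i}\otimes E_N)\big([a_k^{*}a_l]_{k,l}\big)$, which is positive because the Gram matrix $[a_k^{*}a_l]$ is positive and $id_{n_i}\otimes E_N$ is positive, and which lies in $M_{n_i}(\mathbb{C})\otimes N$ precisely because $E_N$ has range $N$. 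Thus $h_i$ is a bounded positive element of the required algebra and property (2) holds.

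The step I expect to be the main obstacle is \emph{not} the factorization itself but the passage from completely positive to genuinely \emph{unital} completely positive maps. The maps produced above satisfy $\varphi_i(1)=[E_N(b_k b_l^{*})]$ and $\phi_i(1)=\sum_k a_k a_k^{*}$, neither of which is automatically the identity, so a normalization is needed to replace $\varphi_i,\phi_i$ by u.c.p. maps without destroying (1)--(2). Here I would exploit that $H_{id}$ is a \emph{tracial} correspondence, so the approximating vectors may be chosen nearly normalized, and that $M$ is finite, which lets one absorb the correction factors (e.g. $\sum_k a_k a_k^{*}\to 1$) by a small perturbation that preserves the density computation. Once this normalization is carried out and combined with the convexity upgrade above, properties (1) and (2) survive and the proposition follows.
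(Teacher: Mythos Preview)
Your plan matches the paper's: invoke Mingo's equivalence to get approximate factorization of $id_M$ by $E_N$, realize each approximant explicitly as a composition through $M_{n_i}(\mathbb{C})\otimes N$, upgrade $\sigma$-weak to $\|\cdot\|_2$-convergence by convexity (the paper cites \cite[Lemma~2.2]{anan} for this step in subunital form), and compute the density $h_i$ by pushing $\tau\circ\phi_i$ through $E_N$. Your formulas for $\varphi_i,\phi_i,h_i$ are correct and essentially those of the paper.

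You correctly identify unitality as the only real obstacle, but your sketch of the fix (``nearly normalized vectors'', ``absorb correction factors'') is not yet a proof, and the paper's argument shows it takes specific maneuvers. The paper first arranges, via Anantharaman-Delaroche's lemma, that the c.p.\ approximants satisfy $\tilde\phi_i\circ\tilde\varphi_i(1)\leq 1$; it then replaces $\tilde\varphi_i$ by $\eta_i\tilde\varphi_i+\epsilon_i$ with small $\epsilon_i>0$ and $\eta_i\to 1$, chosen so that $\tilde\phi_i\circ\tilde\varphi_i(1)<1$ and $\tilde\varphi_i(1)$ is \emph{invertible}, whence $\varphi_i(\cdot):=\tilde\varphi_i(1)^{-1/2}\tilde\varphi_i(\cdot)\tilde\varphi_i(1)^{-1/2}$ is unital; finally it sets $\phi_i(y)=(tr_{n_i}\otimes\tau)(y)\,b_i+\tilde\phi_i\bigl(\tilde\varphi_i(1)^{1/2}y\,\tilde\varphi_i(1)^{1/2}\bigr)$ with $b_i=1-\tilde\phi_i\circ\tilde\varphi_i(1)>0$, which is unital by design and still approximates since $\|b_i\|_2\to 0$. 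The state-type correction term contributes $\tau(b_i)\cdot 1$ to the density, so in fact $h_i$ comes out \emph{invertible} in $M_{n_i}(\mathbb{C})\otimes N$---a strengthening beyond the statement you are proving, but one the paper needs in the sequel.
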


We may try to apply Proposition 1.1 to study permanence properties for amenable inclusions, i.e., we try to prove if some approximation property holds for a von Neumann subalgebra $N$, then it also holds for the finite von Neumann algebra $M$ assuming the inclusion $N\subset M$ is amenable. However, it turns out that in several situations, we need to assume $h_i$ to be the identity; in other words, we expect the normal u.c.p. maps $\phi_i$, $\varphi_i$ can be chosen to be trace preserving. In fact, this issue also appears in Haagerup's proof that semidiscreteness $\Rightarrow$ hyperfiniteness for a II$_1$ factor, see \cite{haagerup}. Under certain assumptions on the two algebras, we show $\phi_i$, $\varphi_i$ could be chosen to be trace preserving.

The following are our main theorems.

\begin{theorem}[see Theorem \ref{diffuse}]
Let $M$ be a finite von Neumann algebra with a faithful normal tracial state $\tau$, and let $N\subset M$ be a type II$_{1}$ factor. Let the inclusion $N\subset M$
be amenable. Let $\{x_{1},\ldots,x_{n}\}$ be a finite set in $M$ and let $\varepsilon>0$.
Then there exists an $m\in \mathbb{N}$, and two normal c.p. maps
$S:M\rightarrow M_{m}(\mathbb{C})\otimes N,$ $T:M_{m}(\mathbb{C})\otimes N\rightarrow M,$
such that
\begin{enumerate}
\item $S$ and $T$ are unital,
\item $(tr_{m}\otimes\tau)\circ S=\tau,\quad \tau\circ T=tr_{m}\otimes\tau$,
\item $\|T\circ S(x_{k})-x_{k}\|_{2}<\varepsilon,~k=1,\ldots,n$.
\end{enumerate}
\end{theorem}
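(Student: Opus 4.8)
The plan is to start from the approximate factorization provided by Proposition~\ref{starting prop from Mingo} and to \emph{repair} it so that the two maps become exactly trace preserving. Fix the finite set $\{x_1,\dots,x_n\}$ and $\varepsilon$, and choose an index $i$ large enough that $\|\phi_i\circ\varphi_i(x_k)-x_k\|_2<\varepsilon/2$ for every $k$. Write $P=M_{n_i}(\mathbb C)\otimes N$, which is a II$_1$ factor because $N$ is, and let $Tr=tr_{n_i}\otimes\tau$ be its normalized trace. The only obstruction to trace preservation is the positive ``density'' $h_i\in P$ with $Tr(h_i)=\tau(\phi_i(1))=1$ appearing in condition (2); the heart of the argument is to absorb $h_i$ into a larger matrix amplification, exploiting the abundance of projections and partial isometries in the II$_1$ factor $P$. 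This is exactly where the hypothesis that $N$ is a II$_1$ factor is used, and it is the von Neumann algebra analogue of Haagerup's trick in the semidiscrete $\Rightarrow$ hyperfinite theorem.

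The key construction is a \emph{flattening} homomorphism. First I approximate $h_i$ in $\|\cdot\|_1$ by a positive element $h'=\sum_{j=1}^r\lambda_j e_j$ with $e_j\in P$ orthogonal projections summing to $1$ and eigenvalues $\lambda_j\in\frac1\ell\mathbb Z_{\ge0}$ for a common denominator $\ell$, normalized so that $Tr(h')=1$; the error $\|h_i-h'\|_1$ is to be absorbed into the $\varepsilon$-budget of condition (3) after a preliminary perturbation of $\phi_i$ making its density \emph{exactly} $h'$. Writing $\lambda_j=k_j/\ell$, I define a normal $*$-homomorphism $\theta:P\to \widetilde P:=M_\ell(\mathbb C)\otimes P=M_{\ell n_i}(\mathbb C)\otimes N$ that spreads each $e_j$ diagonally across $k_j$ matrix slots. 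A direct trace computation, using $Tr(e_jxe_j)=Tr(e_jx)$, gives $\widetilde{Tr}(\theta(x))=\frac1\ell Tr(h'x)$ for all $x\in P$, so $\theta$ converts the $h'$-weighted trace into the genuine trace $\widetilde{Tr}$ on $\widetilde P$ up to the scalar $1/\ell$, where $Q:=\theta(1)$ is a projection of trace $1/\ell$.

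To manufacture the corrected map I assemble $\theta$ with a tiling and a conditional expectation. Because $\widetilde P$ is a II$_1$ factor I may choose partial isometries $w_1,\dots,w_\ell$ with $w_s^*w_s=Q$ and $\sum_s w_sw_s^*=1$, and let $\mathcal E$ be the $\widetilde{Tr}$-preserving conditional expectation of $Q\widetilde PQ$ onto $\theta(P)$. Setting $\Phi(y)=\tfrac1\ell\,\theta^{-1}\!\big(\mathcal E(\sum_{s}w_s^*yw_s)\big)$ produces a normal unital completely positive map $\Phi:\widetilde P\to P$, and the running trace computation then yields $\widetilde{Tr}=Tr(h'\,\Phi(\cdot))$ exactly. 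Consequently $T:=\phi_i\circ\Phi:\widetilde P\to M$ is unital, completely positive, and (once the density is exactly $h'$) satisfies $\tau\circ T=\widetilde{Tr}$, which is condition (2) for $T$. For the second map I take $S:=T^{*}$, the adjoint with respect to $\tau$ and $\widetilde{Tr}$: it is automatically normal and completely positive, it is unital precisely because $T$ is trace preserving, and it is trace preserving precisely because $T$ is unital, so $S$ satisfies (1) and (2) for free. Finally $T\circ S=T\circ T^{*}$, and since the factorizing maps of Proposition~\ref{starting prop from Mingo} may be taken to be mutual adjoints (so that $T\approx\phi_i$ and $T^{*}\approx\varphi_i$ on the relevant elements), one bounds $\|T\circ S(x_k)-x_k\|_2$ by $\|\phi_i\circ\varphi_i(x_k)-x_k\|_2$ plus the perturbation errors, all kept below $\varepsilon$ using $\|\cdot\|_1\le\|\cdot\|_2$ and the $\|\cdot\|_2$-contractivity of u.c.p.\ maps.

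I expect the main obstacle to be achieving \emph{exact} unitality and \emph{exact} trace preservation simultaneously: naive fixes (sandwiching by $h^{-1/2}$ to correct the trace, or by $\phi(1)^{-1/2}$ to correct unitality) repair one condition while destroying the other, and it is precisely the flattening-plus-tiling construction that breaks this see-saw by converting the density into an honest trace inside the amplified II$_1$ factor. The two delicate points will be (i) the preliminary perturbation of $\phi_i$ so that its density is \emph{exactly} the finite-spectrum element $h'$ while keeping $\phi_i$ (and in particular its unitality) close in the relevant seminorm, and (ii) verifying that passing to the adjoint $S=T^{*}$, together with the mutual-adjoint normalization of the original factorization, does not spoil the approximation in (3).
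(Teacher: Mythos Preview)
Your overall shape---amplify to flatten the density $h$, build a trace-preserving $T$, then take $S=T^{*}$---is indeed the paper's strategy, but the proposal contains a genuine gap at its central step and an incorrect claim at the end.

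First, the ``flattening $*$-homomorphism'' $\theta:P\to\widetilde P$ with $\widetilde{Tr}(\theta(x))=\tfrac1\ell Tr(h'x)$ cannot exist. Since $P$ is a II$_1$ factor, any nonzero normal $*$-homomorphism $\theta$ into another finite factor satisfies $\widetilde{Tr}\circ\theta=c\cdot Tr$ for a \emph{constant} $c=\widetilde{Tr}(\theta(1))$; the composition $\widetilde{Tr}\circ\theta$ is a trace on $P$ and $P$ has a unique one. So $\widetilde{Tr}(\theta(x))=\tfrac1\ell Tr(h'x)$ forces $h'$ to be scalar, which is exactly the case you are trying to reach, not the one you start from. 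What you are describing (``spread $e_j$ across $k_j$ slots'') is only multiplicative on the block-diagonal subalgebra $\bigoplus_j e_jPe_j$; on the off-diagonal pieces $e_jPe_k$ it is a genuine completely positive compression, not a homomorphism. In the paper this is precisely Lemma~\ref{4}: the map $T_2(e_{ij}\otimes x)=f_{ij}\otimes x$ is a compression of a representation, and the companion map $S_2$ is \emph{not} a left inverse---one has only $\|S_2\circ T_2(x)-x\|_\phi^{\sharp}\le\|h^{1/2}x-xh^{1/2}\|_2$. Your $\Phi$ would have to satisfy a similar estimate, and you never establish one.

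Second, even granting a correct c.p.\ version of $\theta$, the final paragraph does not close. The assertion that ``the factorizing maps of Proposition~\ref{starting prop from Mingo} may be taken to be mutual adjoints'' is not justified and is in fact incompatible with unitality: if $\varphi_i=\phi_i^{*}$ with respect to $\tau$ and $Tr$, then $\varphi_i(1)=h_i\neq 1$. The paper does \emph{not} assume this. What it does instead is: once the trace-preserving $T$ is built, it exhibits explicit contractions $y_k$ (namely $y_k=T_2(S_1(u_k))$) with $\|T(y_k)-u_k\|_2$ small, and then a Cauchy--Schwarz computation with unitaries gives $\|S(u_k)-y_k\|_2$ small for $S=T^{*}$. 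The reason $\|T(y_k)-u_k\|_2$ is small is exactly the commutator bound above, combined with Lemma~\ref{3}(2), which says $\|T_1(x)\|_2^2\le Tr(h^{1/2}xh^{1/2}x^{*})$ and hence $\|h^{1/2}x_k-x_kh^{1/2}\|_2^2\le 2-2\|T_1(x_k)\|_2^2$ is small when $T_1(x_k)\approx u_k$. This commutator mechanism is the heart of Haagerup's argument and is entirely absent from your sketch; without it there is no control on the ``perturbation errors'' you allude to.
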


\begin{theorem}[see Theorem \ref{atomic}]
Let $M$ be a type II$_{1}$ factor with a faithful normal tracial state $\tau$, and let $N\subset M$ be a von Neumann subalgebra having an atomic part. Let the inclusion $N\subset M$
be amenable. Let $\{x_{1},\ldots,x_{n}\}$ be a finite set in $M$ and let $\varepsilon>0$.
Then there exists an $m\in \mathbb{N}$, and two normal c.p. maps
$S:M\rightarrow M_{m}(\mathbb{C})\otimes N,$ $T:M_{m}(\mathbb{C})\otimes N\rightarrow M,$
such that
\begin{enumerate}
\item $S$ and $T$ are unital,
\item $(tr_{m}\otimes\tau)\circ S=\tau,\quad \tau\circ T=tr_{m}\otimes\tau$,
\item $\|T\circ S(x_{k})-x_{k}\|_{2}<\varepsilon,~k=1,\ldots,n$.
\end{enumerate}
\end{theorem}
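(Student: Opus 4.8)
The plan is to start from Proposition~\ref{starting prop from Mingo} and to \emph{remove the density} $h_i$, which is the sole obstruction to trace preservation. Write $P_i=M_{n_i}(\mathbb{C})\otimes N$ and $\tau_{P_i}=tr_{n_i}\otimes\tau$; unitality of $\phi_i$ forces $\tau_{P_i}(h_i)=1$, so condition (2) says precisely $\tau\circ\phi_i=\tau_{P_i}(h_i\,\cdot)$. Dually, $\tau_{P_i}\circ\varphi_i$ is a normal state on the factor $M$, hence of the form $\tau(g_i\,\cdot)$ for a positive $g_i$ with $\tau(g_i)=1$; so $\varphi_i$ carries an analogous ``co-density'' $g_i$ on the $M$-side, and the theorem asks that \emph{both} defects be killed exactly while (3) is only asked within $\varepsilon$. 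The first reduction is to replace $h_i$ and $g_i$ by nearby positive invertible elements with finite, rational spectrum (functional calculus plus normalization), the change being controllable in $\|\cdot\|_2$ on $\{x_1,\dots,x_n\}$ and thus absorbable into the final $\varepsilon$.

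The engine is an averaging of the densities over unitary conjugates, realized through a matrix amplification so as to stay within maps of the prescribed form and to leave $T\circ S$ untouched. Suppose I can find unitaries $u_1,\dots,u_r$ in $P_i$ with $\tfrac1r\sum_k u_k^* h_i u_k=1$ \emph{exactly} (equal weights, allowing repetitions). Amplifying to $M_r\otimes P_i=M_{rn_i}(\mathbb{C})\otimes N$, define $T$ on diagonal blocks by $T(Y)=\tfrac1r\sum_k\phi_i(u_kY_{kk}u_k^*)$ and $S(x)=\sum_k e_{kk}\otimes u_k^*\varphi_i(x)u_k$. Then $T,S$ are normal u.c.p.; $T$ is unital and, by traciality, exactly trace preserving since $\tau\circ T=\tau_{P_i}\big(\tfrac1r\sum_k u_k^* h_i u_k\,\cdot\big)=\tau_{P_i}$; and the conjugations cancel, $T\circ S(x)=\phi_i\circ\varphi_i(x)$, so (3) survives verbatim. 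The matching averaging of $g_i$ by unitaries in the factor $M$, folded into the same amplification, is what makes $S$ exactly trace preserving; here the test set must be enlarged to the conjugates $w_j^* x_k w_j$ and the pointwise convergence of Proposition~\ref{starting prop from Mingo} controlled uniformly, which is part of the delicate accounting.

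The hypotheses on $M$ and $N$ enter exactly at the solvability of these averaging identities. On the $M$-side, factoriality of $M$ lets Dixmier averaging drive $g_i$ to the scalar $\tau(g_i)1=1$, and the rational-spectrum reduction together with equivalence of equal-trace projections in the $\mathrm{II}_1$ factor makes this exact rather than merely norm-approximate. On the $P_i$-side the atomic hypothesis is used more carefully: since $N$ is not a factor, averaging inside $P_i$ only reaches the centre $1\otimes Z(N)$, so the most one gets from within-block averaging (using the full matrix units of $M_{n_i}(\mathbb{C})\otimes M_{d_k}(\mathbb{C})$ on each atomic block $z_k$, where finite-dimensional Dixmier/Carath\'eodory is exact) is a density $\sum_k c_k z_k$ with possibly unequal block weights $c_k$. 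The remaining cross-block defect is then corrected on the $M$-side: because $M$ is a $\mathrm{II}_1$ factor, mass can be redistributed between the images of distinct atomic blocks by partial isometries of equal trace, reassembling the uniform scalar density while preserving unitality. This cross-block reweighting, carried out \emph{exactly} and compatibly with the amplification, is the main obstacle.

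Finally I would assemble $S$ and $T$ with $m=rn_i$, checking that complete positivity, normality and unitality are preserved at each stage (everything is built from $\phi_i,\varphi_i$, unitary conjugations, equal-weight convex sums and amplifications), that (2) holds on the nose by construction, and that (3) holds within $\varepsilon$ for $i$ large by the approximation $\phi_i\circ\varphi_i\to \mathrm{id}$ together with the spectral and reweighting perturbations controlled in the first three paragraphs. The essential difficulty, and the place where the proof stands or falls, is the exact uniformization of the densities to the global scalar $1$ rather than merely to the centre of $N$ — precisely what the atomic structure of $N$ and the factoriality of $M$ are jointly designed to achieve — while keeping (1) and (2) exact and letting only (3) absorb the error.
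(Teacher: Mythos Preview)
Your approach misreads the hypothesis. In this paper, ``$N$ has an atomic part'' means only that there exists a single nonzero projection $p\in N$ with $pNp=\mathbb{C}p$; it does \emph{not} mean $N$ is atomic (a direct sum of matrix algebras). Your third paragraph, however, assumes a decomposition of $N$ into finite-dimensional factor blocks $z_kNz_k\cong M_{d_k}(\mathbb{C})$ and proposes exact Dixmier/Carath\'eodory averaging on each block. No such decomposition is available: $N$ could be, say, $\mathbb{C}\oplus R$ with $R$ the hyperfinite $\mathrm{II}_1$ factor, or $\mathbb{C}\oplus L^\infty[0,1]$. On the diffuse summand there is no finite unitary average that takes $h_i$ exactly to a scalar, and your ``cross-block reweighting on the $M$-side'' has no finite family of blocks to reweight. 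So the key step of uniformizing $h_i$ to $1$ collapses.

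The paper's argument is both different and far shorter, and it uses the hypothesis in the opposite direction: it never averages at all. Pick the minimal projection $p\in N$; by passing to corners, the amenability of $N\subset M$ gives that $\mathbb{C}p\subset pMp$ is amenable, so $pMp$ is an injective (hence hyperfinite) $\mathrm{II}_1$ factor, and therefore so is $M$. Haagerup's original Proposition~3.5 then provides trace-preserving unital $S_1:M\to M_m(\mathbb{C})$ and $T_1:M_m(\mathbb{C})\to M$ with $\|T_1S_1(x_k)-x_k\|_2<\varepsilon$. One finishes by the trivial inclusion and expectation $S_2:x\mapsto x\otimes 1_N$ and $T_2:y\otimes z\mapsto \tau(z)y$, setting $S=S_2S_1$, $T=T_1T_2$. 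Thus the ``atomic part'' is used only to conclude $M$ is hyperfinite, and all the delicate density-removal you outline is avoided entirely by quoting Haagerup.
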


Since $M$ is amenable if and only if the inclusion $\mathbb{C}1\subset M$ is amenable (c.f. \cite[3.23]{popa} or \cite[Proposition 5]{popa-monod}), Theorem 1.3 generalizes a result of Haagerup \cite[Proposition 3.5]{haagerup}, which corresponds to the case $N=\mathbb{C}1$.

Using these two theorems, we could prove some permanence results for amenable inclusions.
\begin{corollary}[See Corollary \ref{corollary on permanence results}]\label{H under amenable inclusion}
Let $M$ be a finite von Neumann algebra and let $N\subset M$ be a type II$_{1}$ factor. If the inclusion $N\subset M$ is amenable and $N$ has the Haagerup property, then $M$ also has the Haagerup property.
\end{corollary}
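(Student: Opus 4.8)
The plan is to use the trace-preserving factorization from Theorem \ref{diffuse} together with a Haagerup net for $N$ to manufacture a Haagerup net for $M$. Recall that a finite von Neumann algebra $(P,\tau)$ has the Haagerup property if there is a net of normal trace-preserving u.c.p. maps $\psi_i:P\to P$ such that each induced operator on $L^2(P,\tau)$ is compact and $\|\psi_i(a)-a\|_2\to 0$ for every $a\in P$. The key facts I will exploit are that a trace-preserving c.p. map is a $\|\cdot\|_2$-contraction (hence extends to a bounded operator on the $L^2$-space), and that the compact operators form a two-sided ideal.

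First I would record that $M_m(\mathbb{C})\otimes N$ has the Haagerup property whenever $N$ does. Indeed, if $(\psi_i)$ is a Haagerup net for $N$, then $\mathrm{id}_{M_m(\mathbb{C})}\otimes\psi_i$ is a net of normal trace-preserving u.c.p. maps on $M_m(\mathbb{C})\otimes N$ converging pointwise to the identity in $\|\cdot\|_2$, and under the identification $L^2(M_m(\mathbb{C})\otimes N)\cong L^2(M_m(\mathbb{C}))\otimes L^2(N)$ the induced operator has the form $\mathrm{id}_{L^2(M_m(\mathbb{C}))}\otimes K$ with $K$ compact; this is compact because $L^2(M_m(\mathbb{C}))$ is finite-dimensional.

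Next, to produce the Haagerup net for $M$, I would index it by pairs $(F,\varepsilon)$ with $F=\{x_1,\dots,x_n\}\subset M$ finite and $\varepsilon>0$. Given such data, apply Theorem \ref{diffuse} to obtain $m\in\mathbb{N}$ and trace-preserving u.c.p. maps $S:M\to M_m(\mathbb{C})\otimes N$ and $T:M_m(\mathbb{C})\otimes N\to M$ with $\|T\circ S(x_k)-x_k\|_2<\varepsilon$ for all $k$. Using the previous step, choose a compact trace-preserving u.c.p. map $\Psi$ on $M_m(\mathbb{C})\otimes N$ with $\|\Psi(S(x_k))-S(x_k)\|_2<\varepsilon$ for all $k$, and set $\Phi_{F,\varepsilon}=T\circ\Psi\circ S$. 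Then $\Phi_{F,\varepsilon}$ is a normal u.c.p. map; it is trace preserving because $\tau\circ T=tr_m\otimes\tau$, $\Psi$ preserves $tr_m\otimes\tau$, and $(tr_m\otimes\tau)\circ S=\tau$. Its induced $L^2$-operator equals the composite of the induced operators of $S$, $\Psi$ and $T$; since the outer two are contractions and the middle one is compact, the composite is compact. Finally $\|\Phi_{F,\varepsilon}(x_k)-x_k\|_2\le\|T(\Psi(S(x_k)))-T(S(x_k))\|_2+\|T(S(x_k))-x_k\|_2<2\varepsilon$, where the first term is controlled using that $T$ is a $\|\cdot\|_2$-contraction. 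This exhibits $M$ as having the Haagerup property.

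The main obstacle is the bookkeeping ensuring that compactness of the $L^2$-operator genuinely survives the two-sided sandwich by $S$ and $T$. This hinges on the trace-preservation of $S$ and $T$ (so that they extend to $L^2$-contractions and the $L^2$-extension of the composition agrees with the composition of the $L^2$-extensions) and on the ideal property of the compacts; it is precisely here that Theorem \ref{diffuse}, rather than the weaker factorization of Proposition \ref{starting prop from Mingo}, is needed. The remainder is a routine net argument over the directed set of pairs $(F,\varepsilon)$.
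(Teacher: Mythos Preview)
Your proposal is correct and follows essentially the same approach as the paper: apply Theorem \ref{diffuse} to obtain trace-preserving $S$ and $T$, sandwich a Haagerup-type map on $M_m(\mathbb{C})\otimes N$ between them, and verify the subtracial/trace-preserving, compactness, and $\|\cdot\|_2$-approximation conditions, indexing by the directed set of pairs $(F,\varepsilon)$. The only cosmetic difference is that you use the (equivalent) trace-preserving u.c.p.\ formulation of the Haagerup property, whereas the paper works with the subtracial formulation $\tau\circ L\le\tau$ from the preliminaries; your extra paragraph spelling out why $M_m(\mathbb{C})\otimes N$ inherits the Haagerup property and why compactness survives the sandwich makes explicit what the paper leaves as ``easy to check''.
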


\begin{corollary}[See Corollary \ref{weakly}]
Let $M$ be a finite von Neumann algebra and let $N\subset M$ be a type $II_{1}$ factor. If the inclusion $N\subset M$ is amenable and $N$ is weakly exact, then $M$ is also weakly exact.
\end{corollary}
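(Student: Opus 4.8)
The plan is to verify the standard local characterization of weak exactness: a finite von Neumann algebra $(M,\tau)$, represented in standard form on $L^2(M,\tau)$, is weakly exact if and only if the inclusion $\iota_M\colon M\hookrightarrow B(L^2(M))$ is weakly nuclear, i.e. there exist nets of normal u.c.p. maps $\Phi_\lambda\colon M\to M_{k_\lambda}(\mathbb{C})$ and u.c.p. maps $\Psi_\lambda\colon M_{k_\lambda}(\mathbb{C})\to B(L^2(M))$ with $\Psi_\lambda\circ\Phi_\lambda\to\iota_M$ in the point-ultraweak topology. Thus it suffices to assemble such a factorization of $\iota_M$ out of the weak exactness of $N$ and the \emph{trace preserving} approximate factorization of $M$ through $P:=M_m(\mathbb{C})\otimes N$ supplied by Theorem \ref{diffuse}. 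The whole point is that the trace preservation in Theorem \ref{diffuse}, which is unavailable from the raw factorization of Proposition \ref{starting prop from Mingo}, lets us transport data between the two standard representations $B(L^2(P))$ and $B(L^2(M))$.

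First I would record a matrix factorization for $P$. Since $N$ is weakly exact, fix normal u.c.p. maps $\alpha_j\colon N\to M_{k_j}(\mathbb{C})$ and u.c.p. maps $\beta_j\colon M_{k_j}(\mathbb{C})\to B(L^2(N))$ with $\beta_j\circ\alpha_j\to\iota_N$ point-ultraweakly. Amplifying by the identity on the finite-dimensional (hence nuclear) algebra $M_m(\mathbb{C})$ gives normal u.c.p. maps $a_j:=\mathrm{id}_{M_m}\otimes\alpha_j\colon P\to M_{mk_j}(\mathbb{C})$ and u.c.p. maps $b_j:=\mathrm{id}_{M_m}\otimes\beta_j$ into $B(L^2(P))$ with $b_j\circ a_j\to\iota_P$ point-ultraweakly. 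Next, given a finite set $\{x_1,\dots,x_n\}\subset M$ and $\varepsilon>0$, Theorem \ref{diffuse} produces $m$ and trace preserving unital c.p. maps $S\colon M\to P$, $T\colon P\to M$ with $\|T\circ S(x_k)-x_k\|_2<\varepsilon$. The Kadison--Schwarz inequality gives $(tr_m\otimes\tau)(S(x)^*S(x))\le(tr_m\otimes\tau)(S(x^*x))=\tau(x^*x)$, so $S$ extends to a contraction $\hat S\colon L^2(M)\to L^2(P)$, $\hat x\mapsto\widehat{S(x)}$, and likewise $T$ gives a contraction $\hat T$. I then set $\Phi:=a_j\circ S\colon M\to M_{mk_j}(\mathbb{C})$, a normal u.c.p. map, and $\Psi\colon M_{mk_j}(\mathbb{C})\to B(L^2(M))$, $\Psi(a):=\hat S^{\,*}\,b_j(a)\,\hat S$, a completely positive contraction, so that the composite $\Psi\circ\Phi$ factors through matrices and lands in the correct algebra $B(L^2(M))$.

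The crux is to show $\Psi\circ\Phi\to\iota_M$ point-ultraweakly along the combined net. For fixed $S$, letting $j\to\infty$ yields $b_j\circ a_j(S(x))\to S(x)$ ultraweakly, hence $\Psi\circ\Phi(x)\to\hat S^{\,*}\,S(x)\,\hat S$; it remains to see that $\hat S^{\,*}\,S(x)\,\hat S$ recovers $x$ as the stage of Theorem \ref{diffuse} refines. The trace preserving estimate forces $\hat S$ to be asymptotically isometric, since $\|x\|_2-\varepsilon<\|\widehat{T\circ S(x)}\|_2=\|\hat T\,\widehat{S(x)}\|_2\le\|\widehat{S(x)}\|_2=\|\hat S\,\hat x\|_2\le\|x\|_2$. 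Consequently the positive defect $S(x^*x)-S(x)^*S(x)$, whose trace is $\tau(x^*x)-\|\hat S\hat x\|_2^2\to0$, tends to $0$ in $\|\cdot\|_1$; a standard Cauchy--Schwarz argument then makes $S$ asymptotically multiplicative in $\|\cdot\|_2$ and shows $\hat S^{\,*}\hat S\to1$ strongly, so that $(tr_m\otimes\tau)\big(S(z)^*S(x)S(y)\big)\to\tau(z^*xy)$ for all $x,y,z\in M$, which is precisely $\hat S^{\,*}\,S(x)\,\hat S\to x$ ultraweakly. An iterated limit over the two nets yields a single net with $\Psi\circ\Phi\to\iota_M$ point-ultraweakly, and the customary rank correction by a fixed state (whose contribution vanishes because $\hat S^{\,*}\hat S\to1$) renders the $\Psi$ unital, establishing weak exactness of $M$.

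The main obstacle is exactly this transport-and-convergence step: moving the matrix factorization of $\iota_P$ from $B(L^2(P))$ down to the standard representation $B(L^2(M))$ and checking that the transported maps still approximate the inclusion $\iota_M$ rather than some compression of it. This rests entirely on the asymptotic isometry of $\hat S$ and the resulting asymptotic multiplicativity of $S$ in the tracial $2$-norm, both of which are consequences of $S$ and $T$ being trace preserving. This is the same mechanism that drives Corollary \ref{H under amenable inclusion} for the Haagerup property; the only structural difference is that here the second leg of the factorization is permitted to take values in $B(L^2(M))$ instead of in $M$, reflecting the distinction between weak exactness and relative amenability.
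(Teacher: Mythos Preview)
Your route differs substantially from the paper's and is considerably more involved. The paper works entirely inside the von Neumann algebras, using Ozawa's local characterization exactly as stated in Theorem~2.1: for a finite-dimensional operator system $E\subset M$, one needs u.c.p.\ maps $\phi:E\to M_n(\mathbb{C})$ and $\psi:\phi(E)\to M$ with $\psi\circ\phi$ approximating $\mathrm{id}_E$. Given the trace-preserving $S,T$ from Theorem~\ref{diffuse}, the paper simply notes that $S(E)$ sits in a finite-dimensional operator system $\widetilde E\subset M_m(\mathbb{C})\otimes N$, invokes weak exactness of $M_m(\mathbb{C})\otimes N$ to get $S'_j:\widetilde E\to M_{n_j}(\mathbb{C})$ and $T'_j:S'_j(\widetilde E)\to M_m(\mathbb{C})\otimes N$, and composes: $S'_j\circ S$ and $T\circ T'_j$ do the job. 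The only estimate needed is $\|T(y)\|_2\le\|y\|_2$, immediate from trace preservation and Kadison--Schwarz.

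By contrast, you pass to the standard representation, use the weak-nuclearity formulation with the second leg landing in $B(L^2(M))$, and then have to transport from $B(L^2(P))$ to $B(L^2(M))$ by conjugating with $\hat S$; this forces you to prove asymptotic multiplicativity of $S$ in $\|\cdot\|_2$. That argument is correct in spirit, though a couple of points are loose: your amplification $\mathrm{id}_{M_m}\otimes\beta_j$ lands in $M_m(\mathbb{C})\otimes B(L^2(N))\cong B(\mathbb{C}^m\otimes L^2(N))$, not in $B(L^2(P))$, so the identification with the space carrying $\hat S$ needs adjustment; and the equivalence you invoke at the start (weak exactness $\Leftrightarrow$ $\iota_M$ weakly nuclear with \emph{normal} first legs defined on all of $M$) is not the characterization recorded in the paper and would itself need justification---in particular, your appeal to it for $N$ to obtain globally defined normal $\alpha_j$ is stronger than what Ozawa's theorem provides. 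None of this is fatal, since you only ever evaluate on finitely many elements, but the paper's approach sidesteps all of it: keeping the second leg inside $M$ via $T$ makes the asymptotic-multiplicativity and Hilbert-space-transport machinery unnecessary.
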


\begin{corollary}[See Corollary \ref{weak}]
Let $M$ be a finite von Neumann algebra and let $N\subset M$ be a type II$_{1}$ factor. If the inclusion $N\subset M$ is amenable and $N$ has the weak Haagerup property, then $M$ also has the weak Haagerup property.
\end{corollary}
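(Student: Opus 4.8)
The plan is to mimic the argument used for the Haagerup property (Corollary \ref{H under amenable inclusion}), replacing the completely positive witnessing maps by completely bounded ones with uniformly controlled cb-norm. Recall that $N$ having the weak Haagerup property supplies a constant $C\ge 1$ and a net of normal, trace preserving, completely bounded maps $\psi_{j}:N\rightarrow N$ with $\|\psi_{j}\|_{cb}\le C$, each inducing a compact operator on $L^{2}(N,\tau)$, and with $\|\psi_{j}(a)-a\|_{2}\rightarrow 0$ for every $a\in N$. The goal is to produce, for each finite set $\{x_{1},\dots,x_{n}\}\subset M$ and each $\varepsilon>0$, a normal completely bounded map $\Phi:M\rightarrow M$ with $\|\Phi\|_{cb}\le C$, inducing a compact operator on $L^{2}(M,\tau)$, and with $\|\Phi(x_{k})-x_{k}\|_{2}<\varepsilon$; assembling these over the directed set of pairs $(\{x_{1},\dots,x_{n}\},\varepsilon)$ then yields the net witnessing the weak Haagerup property of $M$.

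First I would invoke Theorem \ref{diffuse}: given the finite set and $\varepsilon>0$, it provides $m\in\mathbb{N}$ together with unital normal c.p. maps $S:M\rightarrow M_{m}(\mathbb{C})\otimes N$ and $T:M_{m}(\mathbb{C})\otimes N\rightarrow M$, trace preserving in both directions and with $\|T\circ S(x_{k})-x_{k}\|_{2}<\varepsilon/2$. Next I would amplify the witnessing maps to the middle algebra and set
\[
\Phi_{j}=T\circ(\mathrm{id}_{M_{m}}\otimes\psi_{j})\circ S:M\rightarrow M.
\]
The cb-norm bound is immediate: $S$ and $T$ are unital and completely positive, so $\|S\|_{cb}=\|T\|_{cb}=1$, while amplification leaves the cb-norm unchanged, giving $\|\Phi_{j}\|_{cb}\le\|\mathrm{id}_{M_{m}}\otimes\psi_{j}\|_{cb}=\|\psi_{j}\|_{cb}\le C$. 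Trace preservation is equally direct: using $\tau\circ T=tr_{m}\otimes\tau$, the identity $(tr_{m}\otimes\tau)\circ(\mathrm{id}_{M_{m}}\otimes\psi_{j})=tr_{m}\otimes\tau$ (from trace preservation of $\psi_{j}$), and $(tr_{m}\otimes\tau)\circ S=\tau$, one obtains $\tau\circ\Phi_{j}=\tau$.

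The main point, and the reason Theorem \ref{diffuse} rather than Proposition \ref{1st theorem} is needed, is the compactness of the induced $L^{2}$-operator. Because $S$ is unital, completely positive and satisfies $(tr_{m}\otimes\tau)\circ S=\tau$, the Kadison--Schwarz inequality gives $(tr_{m}\otimes\tau)(S(x)^{*}S(x))\le(tr_{m}\otimes\tau)(S(x^{*}x))=\|x\|_{2}^{2}$, so $S$ extends to a contraction $\widehat{S}$ between the $L^{2}$-spaces; the same argument, using $\tau\circ T=tr_{m}\otimes\tau$, extends $T$ to an $L^{2}$-contraction $\widehat{T}$. On the middle factor the $L^{2}$-extension of $\mathrm{id}_{M_{m}}\otimes\psi_{j}$ is $\mathrm{id}_{L^{2}(M_{m}(\mathbb{C}))}\otimes\widehat{\psi_{j}}$ acting on $L^{2}(M_{m}(\mathbb{C}))\otimes L^{2}(N)$; since $L^{2}(M_{m}(\mathbb{C}))$ is finite dimensional and $\widehat{\psi_{j}}$ is compact, this amplification is a finite direct sum of copies of $\widehat{\psi_{j}}$, hence compact. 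As the compacts form an ideal, $\widehat{\Phi_{j}}=\widehat{T}\,(\mathrm{id}\otimes\widehat{\psi_{j}})\,\widehat{S}$ is compact.

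It remains to control the approximation and package the result. For the fixed factorization coming from Theorem \ref{diffuse}, the elements $S(x_{k})\in M_{m}(\mathbb{C})\otimes N$ are fixed, and $\|(\mathrm{id}_{M_{m}}\otimes\psi_{j})(S(x_{k}))-S(x_{k})\|_{2}\rightarrow 0$ since $\psi_{j}\rightarrow\mathrm{id}_{N}$ pointwise in $\|\cdot\|_{2}$ and $M_{m}(\mathbb{C})$ is finite dimensional; applying the $L^{2}$-contraction $\widehat{T}$ gives $\|\Phi_{j}(x_{k})-T\circ S(x_{k})\|_{2}\rightarrow 0$, so choosing $j$ large yields $\|\Phi_{j}(x_{k})-x_{k}\|_{2}<\varepsilon$ by the triangle inequality. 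Setting $\Phi:=\Phi_{j}$ for such a $j$ finishes the construction. I expect no serious obstacle here beyond Theorem \ref{diffuse} itself, which has already absorbed the one delicate feature, namely the trace preservation that makes $S$ and $T$ act boundedly on $L^{2}$; within this corollary the only care needed is the bookkeeping that collapses the two indices — the factorization $(m,S,T)$ depending on $(\{x_{k}\},\varepsilon)$, and then $j$ depending on that factorization — into a single net indexed by $(\{x_{k}\},\varepsilon)$.
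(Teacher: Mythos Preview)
Your argument has a genuine gap: you have overlooked condition~(2) in the definition of the weak Haagerup property, namely the \emph{self-adjointness} requirement $\langle \phi_i(x),y\rangle_\tau=\langle x,\phi_i(y)\rangle_\tau$ for all $x,y\in M$. You describe the witnessing maps $\psi_j$ on $N$ as ``trace preserving'', but that is not what the definition provides; it provides maps that are symmetric with respect to the trace inner product. More importantly, your candidate $\Phi_j=T\circ(\mathrm{id}_{M_m}\otimes\psi_j)\circ S$ has no reason to be symmetric: even if $\mathrm{id}_{M_m}\otimes\psi_j$ is symmetric on $L^2(M_m(\mathbb{C})\otimes N)$, pre- and post-composing with $S$ and $T$ destroys this, since $S$ and $T$ are not each other's Hilbert-space adjoints. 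Thus the net you build does not witness the weak Haagerup property as defined.

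The paper's proof addresses exactly this point. Using \cite[Lemma~2.5]{anan7} it produces the $L^2$-adjoint maps $S':M_m(\mathbb{C})\otimes N\to M$ and $T':M\to M_m(\mathbb{C})\otimes N$ (which are again normal u.c.p.\ because $S,T$ are trace preserving), and then \emph{symmetrizes} by setting $\widetilde{T}=\tfrac12\bigl(T\circ L\circ S+S'\circ L\circ T'\bigr)$. A short computation using the adjoint relations and the symmetry of $L$ shows $\langle\widetilde{T}(x),y\rangle_\tau=\langle x,\widetilde{T}(y)\rangle_\tau$. The price is that the approximation estimate must now be carried out in the weak form $|\langle\widetilde{T}(x_i)-x_i,x_j\rangle_\tau|<2\varepsilon$ (invoking \cite[Remark~7.5]{Knudby}), since one also has to control the $S'\circ L\circ T'$ summand. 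Your cb-norm and compactness checks are fine and carry over to this symmetrized map, but the symmetrization step is the missing idea.
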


Note that Bannon and Fang \cite{fang} proved a permanence result for the Haagerup property for amenable inclusions for finite von Neumann algebras in the framework of correspondences. In this paper, we prove Corollary \ref{H under amenable inclusion} from the point of view of normal u.c.p. maps.


This paper is organised as follows. In Section 2, we present some preliminaries.
In Section 3, we prove that the amenability of the inclusion $N\subset M$ of finite von Neumann algebras implies that the identity map on $M$ has an approximate factorization through $N\otimes M_m(\mathbb{C})$ via normal unital completely positive maps.
In Section 4, we use some matrix techniques and the results in Section 3 to show that the above normal unital completely positive maps can be chosen to be trace preserving in two cases: when $M$ is a finite von Neumann algebra and $N\subset M$ is a II$_1$ factor, and when $M$ is a II$_1$ factor and $N\subset M$ has an atomic part. In the last section, we present three permanence properties for some amenable inclusions.

\section{Preliminaries}
In this section, we recall briefly some basic concepts that will be used later. For more details and results on correspondences, relative amenability, and completely positive maps, we refer the reader to \cite{anan4, anan3, anan, anan5, anan6,mingo2, mingo, popa}.

\subsection*{Correspondences}
Let $M$ and $N$ be von Neumann algebras. Recall that \emph{a correspondence from $M$ to $N$} is a $*$-representation of $N\otimes M^{op}$ on a Hilbert space $H$, which is normal when restricted to both $N=N\otimes1$ and $M^{op}=1\otimes M^{op}$.

\subsection*{Correspondences associated to completely positive maps}
Let $M$ be a finite von Neumann algebra with a faithful normal trace $\tau$. Given a normal completely positive map $\phi:M\rightarrow M$, we can use the Stinespring dilation to construct a correspondence which is denoted by $H_{\phi}$. Define on the linear space $H_{0}=M\otimes M$ a sesquilinear form
$\langle x_{1}\otimes y_{1},x_{2}\otimes y_{2} \rangle_{\phi}=\tau(\phi(x^{*}_{2}x_{1})y_{1}y_{2}^{*})$, $\forall x_{1}, y_{1},x_{2},y_{2}\in M$.
It is easy to check that the complete positivity of $\phi$ is equivalent to the positivity of $\langle \cdot,\cdot\rangle_{\phi}$. Let $H_{\phi}$
be the completion of $H_{0}/\sim$, where $\sim$ is the equivalence modulo the null space of $\langle \cdot,\cdot\rangle_{\phi}$. Then $H_{\phi}$ is a correspondence of $M$ and the bimodule structure is given by $x(x_{1}\otimes y_{1})y=xx_{1}\otimes y_{1}y$. We call $H_{\phi}$ the correspondence of $M$
associated to $\phi$, see \cite{popa}.

\subsection*{Relative amenability}
If we regard correspondences as $*$-representations, we can define a topology on these correspondences which is just the usual topology on the set of equivalent classes of representations of $N\otimes M^{op}$. Under this topology, we say that a correspondence \emph{$H_{1}$ is weakly contained in $H_{2}$} if $H_{1}$ is in the closure of $H_{2}$.

Let $M$ be a finite von Neumann algebra with a trace $\tau$, and let $N$ be a von Neumann subalgebra of $M$. Then the inclusion \emph{$N\subset M$ is amenable} if $H_{id}$ is weakly contained in $H_{E_{N}}$, where $id$ is the identity map from $M$ to $M$ and $E_{N}$ is the faithful normal conditional expectation from $M$ onto $N$ preserving trace $\tau$. Popa has given several equivalent conditions for relative amenability in \cite[3.23]{popa} and \cite[Proposition 5]{popa-monod}.

Here are some examples of amenable inclusions. If $M$ is a finite von Neumann algebra, then $M$ is amenable if and only if the inclusion $\mathbb{C}1 \subset M$ is amenable. If $N\subset M$ is an inclusion of II$_{1}$ factors, and the Jones' index $[M:N]<\infty$, then the inclusion $N \subset M$ is amenable. If $M$ is a cocycle crossed product of a finite von Neumann algebra $N$ by a cocycle action of a discrete group $G$, then the inclusion $N \subset M$ is amenable if and only if $G$ is an amenable group. If $N$ is a finite von Neumann algebra and $G\curvearrowright N$ is a weakly compact action, then the inclusion L$G\subset N\rtimes G$ is amenable by \cite[Proposition 3.2]{op}.

\subsection*{Approximate factorization}
Let $\psi:M\rightarrow M$ be completely positive and $a_{1},\ldots$, $a_{n}$, $b_{1},\ldots,b_{n}\in M$. Define
$$\Theta:M\rightarrow M,$$
$$x\mapsto\sum^{n}_{i,j=1}b_{i}^{*}\psi(a_{i}^{*}x a_{j})b_{j}.$$

Let $$ A=\left(\begin{array}{ccc}
a_{1} \ldots a_{n}\\
\end{array}
\right),B=\left(\begin{array}{c}
b_{1}\\
\vdots\\
b_{n}\\
\end{array}
\right).$$
Then $\Theta$ is completely positive by the commutativity of the diagram
$$ \xymatrix{
    M \ar[rr]^{\Theta}\ar[dr]_{\varphi} & & M  \\
     & M_{n}(\mathbb{C})\otimes M\ar[ur]_{\phi} &
    },
$$
where $\varphi(x)=(id_{n}\otimes\psi)(A^{*}xA)$, $\phi(y)=B^{*}yB$, $x\in M$ and
$y \in M_{n}(\mathbb{C})\otimes M$.

We shall say that \emph{a c.p. map $\Theta$ can be factored by $\psi$} if it is of the above form, see \cite{mingo}.
We shall denote by $F_{\psi}$ the set of finite sums of such maps.

Let $\phi,\varphi: M\rightarrow M$ be normal c.p. maps.  that \emph{$\varphi$ may be approximately
factored by $\phi$} if there is a bounded net $(\phi_{r}(x))\subset F_{\phi}$ such that for each $x\in M$,
${\phi_{r}}(x)$ converges to $\varphi(x)$ $\sigma$-weakly for all $x\in M$, see \cite{mingo}.

\subsection*{Haagerup property}
Let $M$ be a finite von Neumann algebra with a faithful normal trace $\tau$. For each $x\in M$, denote by $\|x\|^{2}_2=\tau(x^*x)$.

A finite von Neumann algebra $M$ with a faithful normal trace $\tau$ has the \emph{Haagerup property} if there exists a net $(\phi_{i})_{i\in I}$ of normal
completely positive maps from $M$ to $M$ which satisfy the following conditions,
\begin{enumerate}
\item  $\tau\circ\phi_{i}\leq\tau$,
\item each $\phi_{i}$ induces a compact bounded operator on $L^{2}(M)$,
\item for every $x\in M$, $\lim_{i}\|\phi_{i}(x)-x\|_{2}=0$.
\end{enumerate}
Note that a normal c.p. map $\phi_{i}:M\rightarrow M$ with  $\tau\circ\phi_{i}\leq\tau$ can induce a bounded linear operator on $L^{2}(M)$. To see this,  $\|\phi_{i}(x)\|_{2}^{2}=\tau(\phi_{i}(x)^{*}\phi_{i}(x))\leq\tau(\phi_{i}(x^{*}x))\leq\tau(x^{*}x)=\|x\|_{2}^{2}$. Thus $\phi_{i}$ can be extended to a bounded linear operator on $L^{2}(M)$.

\subsection*{Weak Haagerup property \cite{Knudby}}
Let $M$ be a von Neumann algebra with a faithful normal trace $\tau$.  $M$ has the \emph{weak Haagerup property} if there exist a constant $C>0$ and a net $(\phi_{i})_{i\in I}$ of normal completely bounded maps on $M$ such that
\begin{enumerate}
\item $\|\phi_{i}\|_{c.b.}\leq C$ for every $i$,
\item $\langle \phi_{i}(x),y\rangle_{\tau}=\langle x,\phi_{i}(y)\rangle_{\tau}$ for every $x,y\in M$;
\item each $\phi_{i}$ induces a compact bounded operator on $L^{2}(M)$,
\item for every $x\in M$, $\lim_{i}\|\phi_{i}(x)-x\|_{2}=0$.
\end{enumerate}

\subsection*{Weakly exact von Neumann algebras \cite{ozawabook}}
Let $B$ be an arbitrary unital C*-algebra and $J\lhd B$ be a non-unital closed two-sided ideal. The canonical quotient map will be denoted by $Q:B\rightarrow B/J$.

A von Neumann algebra $M$ is said to be \emph{weakly exact} if for any ideal $J\lhd B$ and any $*$-representation $\pi:M\otimes B \rightarrow B(H)$ with $M\otimes J\subset\textrm{ker}\pi$ and $\pi|_{M\otimes \mathbb{C}1}$ being normal, the induced representation $\tilde{\pi}:M\odot(B/J)\rightarrow B(H)$ is continuous with respect to the minimal tensor norm.

\begin{theorem}[\cite{ozawa}]
Let $M$ be a von Neumann algebra. The following conditions are equivalent.
\begin{enumerate}
\item $M$ is weakly exact.
\item For any finite dimensional operator system $E$ in $M$, there exist two nets of u.c.p. maps $\phi_{i}:E\rightarrow M_{n}(\mathbb{C})$ and $\psi_{i}:\phi_{i}(E)\rightarrow M$ such that the net $(\psi_{i}\circ\phi_{i})_{i\in I}$ converges to $id_{E}$ in the point-$\sigma$-weak operator topology.
\end{enumerate}
\end{theorem}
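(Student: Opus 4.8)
The plan is to prove the two implications separately, treating $(2)\Rightarrow(1)$ as the routine direction and expecting $(1)\Rightarrow(2)$ to carry the real work. Throughout I read $M\otimes B$ as the spatial (minimal) tensor product, so that the given $\pi$ is automatically continuous for the minimal norm.

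For $(2)\Rightarrow(1)$, fix a short exact sequence $0\to J\to B\to B/J\to0$ with quotient map $Q$, and a $*$-representation $\pi:M\otimes_{\min}B\to B(H)$ with $M\otimes J\subset\ker\pi$ and $\pi|_{M\otimes\mathbb{C}1}$ normal; write $\rho(m)=\pi(m\otimes1)$, a normal representation of $M$. Given a finite-dimensional operator system $E\subset M$ and $\xi=\sum_k m_k\otimes Q(b_k)\in E\odot(B/J)$, I take the nets $\phi_i:E\to M_n(\mathbb{C})$ and $\psi_i:\phi_i(E)\to M$ from hypothesis (2). The key point is that, since $\psi_i$ is u.c.p., the map $\psi_i\otimes\mathrm{id}_B:\phi_i(E)\otimes_{\min}B\to M\otimes_{\min}B$ is completely contractive, so composing with $\pi$ yields a completely contractive map $\Xi_i:\phi_i(E)\otimes_{\min}B\to B(H)$, $\Xi_i(w\otimes b)=\pi(\psi_i(w)\otimes b)$, which annihilates $\phi_i(E)\otimes_{\min}J$ because $\psi_i(w)\otimes j\in M\otimes J\subset\ker\pi$. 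As $\phi_i(E)$ sits in the nuclear algebra $M_n(\mathbb{C})$, exactness of $M_n(\mathbb{C})$ guarantees that the quotient map $\phi_i(E)\otimes_{\min}B\to\phi_i(E)\otimes_{\min}(B/J)$ has kernel exactly $\phi_i(E)\otimes_{\min}J$, so $\Xi_i$ descends to a completely contractive $\bar\Xi_i:\phi_i(E)\otimes_{\min}(B/J)\to B(H)$. Consequently $\tilde\pi\big((\psi_i\circ\phi_i\otimes\mathrm{id})(\xi)\big)=\bar\Xi_i\big((\phi_i\otimes\mathrm{id})(\xi)\big)$ has norm at most $\|(\phi_i\otimes\mathrm{id})(\xi)\|_{\min}\le\|\xi\|_{\min}$, the last step again because $\phi_i$ is u.c.p. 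Finally, normality of $\rho$ forces $\tilde\pi((\psi_i\phi_i\otimes\mathrm{id})(\xi))\to\tilde\pi(\xi)$ $\sigma$-weakly, and weak-$*$ lower semicontinuity of the operator norm gives $\|\tilde\pi(\xi)\|\le\|\xi\|_{\min}$; this is exactly the minimal-tensor continuity of $\tilde\pi$, i.e. weak exactness.

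For $(1)\Rightarrow(2)$, I would argue by contradiction via Hahn--Banach separation. Fix a finite-dimensional operator system $E\subset M$ and let $\mathcal{S}$ be the set of maps $\psi\circ\phi:E\to M$ arising from u.c.p. $\phi:E\to M_n(\mathbb{C})$ and u.c.p. $\psi:\phi(E)\to M$, over all $n$. A block-diagonal (direct sum) construction shows $\mathcal{S}$ is convex, and every element is u.c.p., hence contractive. If $\mathrm{id}_E$ were not in the point-$\sigma$-weak closure of $\mathcal{S}$, then, since for finite-dimensional $E$ the continuous dual of $\mathcal{L}(E,M)$ for the point-$\sigma$-weak topology is spanned by the functionals $T\mapsto\sum_k\omega_k(T(x_k))$ with $x_k\in E$, $\omega_k\in M_{*}$, separation would produce $x_1,\dots,x_r\in E$ and $\omega_1,\dots,\omega_r\in M_{*}$ with
$$\mathrm{Re}\sum_k\omega_k(x_k)\;>\;c:=\sup_{\phi,\psi}\;\mathrm{Re}\sum_k\omega_k\big(\psi(\phi(x_k))\big).$$
The plan is then to convert this strict gap into a violation of weak exactness. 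Using the Choi--Stinespring correspondence, for each fixed $\phi$ the inner supremum over u.c.p. $\psi$ is the support function of a weak-$*$ compact convex set of maps into $M$, which I would identify with a minimal-tensor quantity. Concretely, I would take $B=\prod_\phi M_{n_\phi}(\mathbb{C})$ and $J=\bigoplus_\phi M_{n_\phi}(\mathbb{C})$, the product and $c_0$-sum over all u.c.p. $\phi:E\to M_{n_\phi}(\mathbb{C})$, assemble the maps $\phi$ into a single u.c.p. map $\Phi:E\to B$, and build a $*$-representation $\pi$ of $M\otimes_{\min}B$ from a weak-$*$ limit (along an ultrafilter $\mathcal{U}$) of the data $(\omega_k,\psi)$, arranged so that $\pi|_{M\otimes\mathbb{C}1}$ recovers the given normal action of $M$ and $M\otimes J\subset\ker\pi$. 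The separating inequality is engineered so that, after passing to $B/J$, it reads $\|\tilde\pi(\sum_k x_k\otimes Q(b_k))\|>\|\sum_k x_k\otimes Q(b_k)\|_{\min}$, contradicting the minimal-tensor continuity supplied by hypothesis (1).

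The main obstacle is precisely this last construction: manufacturing $\pi$ from the separating functional so that $\pi|_M$ is \emph{normal} and $M\otimes J\subset\ker\pi$, while at the same time matching the supremum $c$ with the minimal tensor norm on $M\odot(B/J)$. The normality requirement is what distinguishes weak exactness from ordinary C*-exactness, and securing it inside the GNS/ultrafilter limit, together with the Choi--Stinespring identification of $c$ as a minimal-tensor norm, is the delicate heart of the argument. By contrast, $(2)\Rightarrow(1)$ needs only that u.c.p. maps tensor to minimal-tensor contractions, the exactness of the matrix blocks $M_n(\mathbb{C})$, normality of $\pi|_M$, and lower semicontinuity of the norm.
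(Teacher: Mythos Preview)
The paper does not supply a proof of this theorem at all: it is simply quoted, with attribution, from Ozawa's paper \cite{ozawa} (see also \cite[Theorem~14.1.4]{ozawabook}), and is used as a black box in the proof of Corollary~\ref{weakly}. So there is nothing in the paper to compare your proposal against.

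That said, a brief assessment of your write-up on its own terms: your $(2)\Rightarrow(1)$ argument is essentially the standard one and is correct. The key ingredients---that u.c.p.\ maps tensor to contractions on minimal tensor products, that $M_n(\mathbb{C})$ is exact so $\phi_i(E)\otimes_{\min}(\cdot)$ preserves the short exact sequence, and that normality of $\pi|_{M\otimes\mathbb{C}1}$ lets you pass the $\sigma$-weak limit through $\tilde\pi$---are exactly what is needed, and you invoke them in the right order. For $(1)\Rightarrow(2)$ you have the right architecture (Hahn--Banach separation, the universal $C^*$-algebra $B=\prod_\phi M_{n_\phi}(\mathbb{C})$ with ideal $J=\bigoplus_\phi M_{n_\phi}(\mathbb{C})$, and an induced $\tilde\pi$ contradicting min-continuity), but as you candidly acknowledge, you have not actually carried out the construction of $\pi$ with \emph{normal} restriction to $M$. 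This is exactly the technical crux in Ozawa's original proof, and your proposal stops short of it; what you have written for this direction is an outline rather than a proof. If you want to complete it, the place to look is \cite[\S2]{ozawa} or \cite[Chapter~14]{ozawabook}, where the normality is arranged via a careful choice of representation built from the predual data in the separating functional.
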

\begin{remark}\label{different topologies coincide}
Assume that $M$ is a finite von Neumann algebra with a trace $\tau$. Note that the above $(\psi_{i}\circ\phi_{i})_{i\in I}$ are u.c.p. maps. Then the choice of topology in which the net $(\psi_{i}\circ\phi_{i})_{i\in I}$ converges to the identity map on $E$ could be one of many topologies without affecting the results. The topologies are the point-weak operator topology, the point-$\sigma$-weak operator topology, the point-strong operator topology and the pointwise $\|\cdot\|_{2}$-norm topology.
\end{remark}

\section{Approximate factorization of the identity map via normal unital completely positive maps}

As the main result of this section, we prove Proposition \ref{1st theorem}. It is based on a result of Mingo \cite{mingo} on the relation between approximate factorization and weak containment of correspondences.
\begin{theorem}[\cite{mingo}]\label{mingo's theorem}
Let $M$ be a finite von Neumann algebra with a trace $\tau$ and let $\phi,\varphi: M\rightarrow M$ be normal c.p. maps. Then $\varphi$ can be approximately factored by $\phi$
if and only if $H_{\varphi}$ is weakly contained in $H_{\phi}$.
\end{theorem}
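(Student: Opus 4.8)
The plan is to reinterpret both sides as statements about a single distinguished state on the $*$-algebra $M\odot M^{op}$ and then to invoke Fell's characterization of weak containment. The starting observation is that every correspondence $H_\phi$ is cyclic: the class $\xi_\phi$ of $1\otimes 1$ is a cyclic vector, since the span of $(a\otimes b^{op})\xi_\phi=a\otimes b$ is the dense subspace $M\otimes M$ (here $a\otimes b^{op}\in M\otimes M^{op}$ acts by $(a\otimes b^{op})(x_1\otimes y_1)=ax_1\otimes y_1b$). Evaluating the defining form at this vector gives
\[
\langle (a\otimes b^{op})\xi_\phi,\ \xi_\phi\rangle_\phi=\tau\big(\phi(a)b\big),
\]
so $H_\phi$ is exactly the GNS representation of the state $\omega_\phi(a\otimes b^{op})=\tau(\phi(a)b)$, and likewise $H_\varphi$ is the GNS representation of $\omega_\varphi(a\otimes b^{op})=\tau(\varphi(a)b)$.

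The key step is to compute the coefficient at a general vector $\eta=\sum_i a_i\otimes b_i$. Using the trace property to cycle the right-hand factors one finds
\[
\langle (x\otimes y^{op})\eta,\ \eta\rangle_\phi=\sum_{i,j}\tau\!\big(\phi(a_j^{*}xa_i)\,b_iyb_j^{*}\big)=\tau\big(\Theta(x)\,y\big),
\]
where $\Theta(x)=\sum_{i,j}b_i^{*}\phi(a_i^{*}xa_j)b_j$ is precisely a map factored by $\phi$ in the sense of the definition of $F_\phi$. Thus the vector states of $H_\phi$ correspond exactly to the elements $\Theta\in F_\phi$ via $\Theta\mapsto(x\otimes y^{op}\mapsto\tau(\Theta(x)y))$, finite sums of vector states again giving elements of $F_\phi$; in particular $\varphi$ itself corresponds to the cyclic coefficient $\omega_\varphi$.

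With this dictionary the equivalence becomes transparent. For the forward direction I would invoke Fell's theorem: since $H_\varphi$ is cyclic with cyclic vector $\xi_\varphi$, it is weakly contained in $H_\phi$ exactly when $\omega_\varphi$ lies in the weak-$*$ closure of the finite sums of vector states of $H_\phi$; translating through the dictionary, this says there is a net $\Theta_r\in F_\phi$ with $\tau(\Theta_r(x)y)\to\tau(\varphi(x)y)$ for all $x,y\in M$, which—because a bounded net converges $\sigma$-weakly iff it converges against the $L^1$-dense family $\{\tau(\,\cdot\,y):y\in M\}$ of normal functionals—is precisely approximate factorization. The converse is the same reading run backwards: a bounded approximating net $\Theta_r$ yields coefficient states converging weak-$*$ to $\omega_\varphi$, whence $H_\varphi$ is weakly contained in $H_\phi$.

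The main obstacle I anticipate is not the algebra but the functional-analytic bookkeeping. First, weak containment of correspondences must be read in the \emph{binormal} class of representations, those normal on $M\otimes1$ and on $1\otimes M^{op}$; one must therefore check that the GNS picture above and Fell's reduction to a single cyclic state remain valid under these normality constraints, rather than merely for arbitrary representations of $M\odot M^{op}$. Second, and more delicately, the definition of approximate factorization demands a \emph{bounded} net, whereas weak-$*$ approximation of $\omega_\varphi$ a priori controls only $\omega_{\Theta_r}(1\otimes1)=\tau(\Theta_r(1))$, an $L^1$-type quantity rather than $\|\Theta_r\|_{c.b.}=\|\Theta_r(1)\|$. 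Extracting a genuinely bounded net from the weak-$*$ convergence—presumably through a convexity or truncation argument exploiting complete positivity and the fixed target $\varphi$—is the step that will require real care.
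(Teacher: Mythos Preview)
The paper does not supply its own proof of this statement: it is quoted from Mingo and used as a black box to launch Lemma~3.2 and Proposition~3.3. There is therefore nothing in the present paper to compare your argument against.

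That said, your outline is the correct one and follows Mingo's original route: the dictionary between vector states of $H_\phi$ at vectors $\eta=\sum_i a_i\otimes b_i$ and maps $\Theta\in F_\phi$ is exactly the bridge, and Fell's criterion for weak containment of a cyclic representation is the right reduction. The two obstacles you flag are genuine and are precisely what the cited references handle. The binormal constraint is dealt with by working from the outset in the $C^*$-completion of $M\odot M^{op}$ for the binormal norm, so that Fell's theorem applies without modification. The boundedness issue is the substantive technical step; its resolution uses exactly the two structural features you anticipate---convexity of $F_\phi$ and its stability under $\Theta\mapsto b^*\Theta(\cdot)b$---via a lemma of Anantharaman-Delaroche (the same Lemma~2.2 of \cite{anan} that the present paper invokes in its proof of Lemma~3.2) to pass from weak-$*$ approximation of $\omega_\varphi$ to a net satisfying $\Theta_r(1)\le 1$ after a suitable normalization. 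Your instinct that this requires a convexity argument is correct.
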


For a finite von Neumann algebra $M$ with a faithful normal trace $\tau$, denote by $L^{1}(M)$ the completion of $M$ with respect to the norm $\|x\|_{1}=\tau(|x|)$, $x\in M$.  Note that for the above normal c.p. map $\phi:M_{n}(\mathbb{C})\otimes N\rightarrow M$, we have $\tau\circ\phi(x)=(tr_{n}\otimes\tau)(hx)$, where $tr_{n}\otimes\tau$ is the normal trace on $M_{n}(\mathbb{C})\otimes N$, and $h$ is a positive element in $L^{1}(M_{n}(\mathbb{C})\otimes N)$.

Note that the convergent topology in approximate factorization is the $\sigma$-weak operator topology. The aim of this section is to show that the normal completely positive maps $\phi$ and $\varphi$ in Proposition \ref{1st theorem} can be chosen to be unital, the convergent topology can be the pointwise $\|\cdot\|_{2}$-norm topology, and, the positive element $h$ can be chosen to be invertible in $M_{n}(\mathbb{C})\otimes N$.

We first need the following lemma.
\begin{lemma}\label{lemma 2.4}
Let $M$ be a finite von Neumann algebra with a trace $\tau$ and let $N\subset M$ be a von Neumann subalgebra. Then the inclusion $N\subset M$
is amenable if and only if there exists a net of normal c.p. maps $\varphi_{i}:M\rightarrow M_{n_{i}}(\mathbb{C})\otimes N$ and
a net of normal c.p. maps $\phi_{i}:M_{n_{i}}(\mathbb{C})\otimes N\rightarrow M$ such that
\begin{enumerate}
\item $\varphi_{i}(x)=\bigoplus^{l_{i}}_{j=1}(id_{i_{j}}\otimes E)(A_{ij}^{*}xA_{ij})$ for $x\in M$, $l_{i},i_{j} \in \mathbb{N}$, $A_{ij}\in M_{1\times i_{j}}(M)$,
$\sum_{j=1}^{l_{i}}i_{j}=n_{i}$ and $E$ is the trace preserving normal conditional expectation from $M$ onto $N$,
\item $\phi_{i}(y)=B_{i}^{*}y B_{i}$ for $y\in M_{n_{i}}(\mathbb{C})\otimes N$, $B_{i}\in M_{n_{i}\times 1}(M)$,
\item $\phi_{i}\circ\varphi_{i}(1)\leq 1$,
\item $\phi_{i}\circ\varphi_{i}(x)\rightarrow x$ in the $\|\cdot\|_{2}$-norm topology for all $x\in M$.
\end{enumerate}
\end{lemma}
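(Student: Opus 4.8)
The plan is to prove both directions by unwinding the definition of relative amenability through Mingo's theorem and the explicit form of approximate factorization. Recall that $N\subset M$ is amenable means precisely that $H_{id}$ is weakly contained in $H_{E_N}$, where $E_N$ is the trace-preserving conditional expectation. By Theorem \ref{mingo's theorem}, this is equivalent to saying that $id_M$ can be approximately factored by $E_N$, i.e.\ there is a bounded net in $F_{E_N}$ converging pointwise $\sigma$-weakly to $id_M$. So the heart of the argument is to translate between ``a single finite-sum element of $F_{E_N}$'' and ``a pair of maps $(\varphi_i,\phi_i)$ of the stated form factoring through $M_{n_i}(\mathbb{C})\otimes N$.''

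For the forward direction, I would start from the approximate factorization of $id_M$ by $E_N$. A single summand of $F_{E_N}$ has the form $x\mapsto B^*(\mathrm{id}_k\otimes E_N)(A^*xA)B$ for row/column matrices $A,B$ over $M$; a finite sum of $l_i$ such summands of sizes $i_j$ is then naturally assembled into a single map by taking the direct sum over $j$. This is exactly what the displayed formula in (1) encodes: setting $n_i=\sum_{j=1}^{l_i} i_j$, define $\varphi_i(x)=\bigoplus_{j=1}^{l_i}(\mathrm{id}_{i_j}\otimes E)(A_{ij}^*xA_{ij})$ as a map into $M_{n_i}(\mathbb{C})\otimes N$, and define $\phi_i(y)=B_i^*yB_i$ with $B_i$ the column obtained by stacking the corresponding $B_{ij}$. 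Then $\phi_i\circ\varphi_i$ is the given finite sum, so (4) follows from the convergence in the approximate factorization, after passing from $\sigma$-weak to $\|\cdot\|_2$-convergence (legitimate for a bounded net of c.p.\ maps converging to the identity, as noted in Remark \ref{different topologies coincide}). Condition (3), $\phi_i\circ\varphi_i(1)\le 1$, can be arranged by a standard normalization: since the net converges to $id_M$ with $id_M(1)=1$, one rescales each $\phi_i$ (equivalently each $B_i$) by a suitable contraction so that the unit is not overshot, without disturbing the pointwise limit.

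For the converse, I would simply observe that the maps in (1) and (2) exhibit $\phi_i\circ\varphi_i$ as an element of $F_{E_N}$: each direct summand of $\varphi_i$ composed with the corresponding block of $\phi_i$ is precisely a map of the canonical form $x\mapsto B^*(\mathrm{id}\otimes E_N)(A^*xA)B$, and the direct sum over $j$ becomes the finite sum defining $F_{E_N}$. Hypothesis (3) gives boundedness of the net (the maps are c.p.\ with $\phi_i\circ\varphi_i(1)\le 1$, hence contractive in the relevant sense), and (4) gives $\|\cdot\|_2$-convergence to $id_M$, which implies $\sigma$-weak convergence. Thus $id_M$ is approximately factored by $E_N$, and by Theorem \ref{mingo's theorem} the inclusion is amenable.

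I expect the main obstacle to be the bookkeeping in the forward direction: one must verify that an arbitrary finite sum in $F_{E_N}$, whose summands involve conditional expectations $(\mathrm{id}_k\otimes E_N)$ with a priori different block sizes and arbitrary matrices $A$ over $M$, can be uniformly repackaged as a single composition through $M_{n_i}(\mathbb{C})\otimes N$ of the prescribed block-diagonal shape, and that the normalization achieving (3) is compatible with preserving both the c.p.\ structure and the limit. The equivalence between the $\sigma$-weak and $\|\cdot\|_2$ topologies for this bounded net, guaranteed by Remark \ref{different topologies coincide}, is what makes the passage between the definition-level statement and the clean form in (1)--(4) routine rather than delicate.
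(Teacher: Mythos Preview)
Your outline matches the paper's approach: both directions go through Mingo's theorem, and the bookkeeping that repackages a finite sum in $F_{E}$ as a block-diagonal $\varphi_i$ followed by $\phi_i(y)=B_i^*yB_i$ is exactly what the paper does. The converse direction is fine as you wrote it.

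There is, however, a genuine gap in your forward direction at the normalization step for condition~(3). You write that one ``rescales each $\phi_i$ \dots\ by a suitable contraction so that the unit is not overshot, without disturbing the pointwise limit.'' This is not a standard rescaling. Mingo's theorem only gives you a \emph{bounded} net $\Theta_i\in F_E$ with $\Theta_i\to id$ $\sigma$-weakly; from $\Theta_i(1)\to 1$ $\sigma$-weakly you cannot conclude $\|\Theta_i(1)\|\to 1$, so neither dividing by $\|\Theta_i(1)\|$ nor conjugating by $\Theta_i(1)^{-1/2}$ will obviously preserve the limit. The paper handles this by invoking \cite[Lemma~2.2]{anan}, which uses the fact that $F_E$ is convex and stable under $b^*(\cdot)b$ to produce a new net in $F_E$ satisfying $\Theta_i(1)\le 1$ while still converging to $id$. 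You should either cite that lemma or supply the argument; it is the one non-formal ingredient in the proof. A minor related point: your appeal to Remark~\ref{different topologies coincide} for the passage from $\sigma$-weak to $\|\cdot\|_2$ convergence is slightly off-target, since that remark concerns u.c.p.\ maps. The paper instead argues directly that $F_E'=\{\Theta\in F_E:\Theta(1)\le 1\}$ is convex and bounded, so its point-$\sigma$-weak and point-$\sigma$-strong closures coincide, which is the cleanest justification here.
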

\begin{proof}
By Theorem \ref{mingo's theorem}, we know that the inclusion $N\subset M$
is amenable if and only if the identity map $id$ can be approximately factored by the normal conditional expectation $E$.

For each element $\Theta$ in $F_{E}$, $\Theta(x)=\sum^{n}_{k=1}\theta_{k}(x)$, where
$$\theta_{k}(x)=\sum^{m^{}_{k}}_{i,j=1}b_{ki}^{*}E(a_{ki}^{*}xa_{kj})b_{kj}, a_{ki}, b_{kj}\in M.$$
For simplicity, we may assume $n=2$. Let
$$ A_{1}=\left(\begin{array}{ccc}
a_{11}&\dots&a_{1m_{1}}\\
\end{array}\right),A_{2}=\left(\begin{array}{ccc}
a_{21}&\dots&a_{2m_{2}}\\
\end{array}
\right),B=\left(\begin{array}{c}
b_{11}\\
\vdots\\
b_{1m_{1}}\\
b_{21}\\
\vdots\\
b_{2m_{2}}
\end{array}
\right).$$
Let $$\varphi(x)=\oplus^{2}_{i=1}(id_{m_{i}}\otimes E)(A_{i}^{*}xA_{i}), x\in M,$$
$$\psi(y)=B^{*}yB,\quad y\in M_{m_{1}+m_{2}}(\mathbb{C})\otimes N.$$
Note that $\varphi$ and $\psi$ are normal c.p. maps from $M$ to $M_{m_{1}+m_{2}}(\mathbb{C})\otimes N$
and $M_{m_{1}+m_{2}}(\mathbb{C})\otimes N$ to $M$ respectively with $\Theta(x)=\psi\circ\varphi(x)$.

It is clear that $F_{E}$ is a convex set and $b^{*}\Theta(\cdot) b\in F_{E}$ for $b\in M$, $\Theta\in F_{E}$.
Then by \cite[Lemma 2.2]{anan} and Theorem \ref{mingo's theorem}, we can choose a net $(\Theta_{i})\subset F_{E}$
such that $\Theta_{i}(1)\leq 1$ and $\Theta_{i}(x)\rightarrow x$ $\sigma$-weakly for all $x\in M$.
Let $F_{E}'=\{\Theta\in F_E: \Theta(1)\leq 1\}$. Obviously, $F_{E}'$ is convex. Note that for a convex set of $CP(M)$, where $CP(M)$ denotes the set of c.p. maps on $M$, the closure in the point-$\sigma$-weak operator topology and the closure in the point-$\sigma$-strong operator topology are the same. And since $F_{E}'$ is bounded, we deduce that $||\Theta_{i}(x)-x||_2\to 0$ for all $x\in M$ for a net $(\Theta_i)\subset F_E'$. Actually, the choice of topology in which the net $(\Theta_{i})$ converges to the identity
map on $M$ could be one of many topologies without affecting the results. The topologies are the point-weak operator topology, the point-$\sigma$-weak operator topology, the point-strong operator topology and the point-wise $\|\cdot\|_{2}$-norm topology.
\end{proof}

\begin{proposition}\label{1st theorem}
Let $M$ be a finite von Neumann algebra with a trace $\tau$ and let $N\subset M$ be a von Neumann subalgebra. If the inclusion $N\subset M$
is amenable, then there exists a net of normal u.c.p. maps $\varphi_{i}:M\rightarrow M_{n_{i}}(\mathbb{C})
\otimes N$, a net of normal u.c.p. maps $\phi_{i}:M_{n_{i}}(\mathbb{C})\otimes N\rightarrow M$ and
a net of positive invertible elements $h_{i}\in M_{n_{i}}(\mathbb{C})\otimes N$ such that for all $x\in M$, $y\in M_{n_{i}}(\mathbb{C})\otimes N$,
\begin{enumerate}
\item $\phi_{i}\circ\varphi_{i}(x)\rightarrow x$ in the $\|\cdot\|_{2}$-norm topology,
\item $\tau\circ\phi_{i}(y)=(tr_{n_{i}}\otimes\tau)(h_{i}y)$.
\end{enumerate}
\end{proposition}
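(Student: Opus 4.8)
The plan is to take the net $(\varphi_i,\phi_i)$ furnished by Lemma~\ref{lemma 2.4} and repair it in three moves so that both maps become unital and the density of $\tau\circ\phi_i$ becomes invertible, all while preserving the pointwise $\|\cdot\|_2$-convergence of the composition $\Theta_i:=\phi_i\circ\varphi_i$ to the identity. Throughout I will use that $\Theta_i(1)\le 1$ and $\Theta_i(x)\to x$ in $\|\cdot\|_2$; taking $x=1$ gives $\Theta_i(1)\to 1$ in $\|\cdot\|_2$. Write $P_i=M_{n_i}(\mathbb{C})\otimes N$ and let $T_i=tr_{n_i}\otimes\tau$ be its trace. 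The guiding idea is that conjugating $\varphi_i$ and $\phi_i$ by the same element leaves $\Theta_i$ exactly unchanged, so it unitalizes $\varphi_i$ for free; the remaining defects are then absorbed by vanishing perturbations.

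First I would unitalize $\varphi_i$. Fix $\epsilon_i>0$, put $a_i=\varphi_i(1)+\epsilon_i 1\in P_i$ (positive invertible), and set
\[
\hat\varphi_i(x)=a_i^{-1/2}\big(\varphi_i(x)+\epsilon_i\tau(x)1\big)a_i^{-1/2},\qquad \hat\phi_i(y)=\phi_i\big(a_i^{1/2}\,y\,a_i^{1/2}\big).
\]
Both are normal c.p.\ and $\hat\varphi_i$ is unital by construction, while the conjugation cancels in the composition to give $\hat\phi_i\circ\hat\varphi_i(x)=\Theta_i(x)+\epsilon_i\tau(x)\phi_i(1)$. Choosing $\epsilon_i$ small enough that $\eta_i:=\epsilon_i\|\phi_i(1)\|\to 0$, the extra term vanishes in $\|\cdot\|_2$ and moreover $\hat\phi_i(1)=\Theta_i(1)+\epsilon_i\phi_i(1)\le(1+\eta_i)1$.

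Next I would unitalize the second map by filling its defect. Rescale $\phi_i'=(1+\eta_i)^{-1}\hat\phi_i$, so that $\phi_i'(1)\le 1$; since $\Theta_i(1)\to1$ and $\eta_i\to0$, the defect $s_i=1-\phi_i'(1)\ge 0$ satisfies $\|s_i\|_2\to 0$. Define $\tilde\phi_i(y)=\phi_i'(y)+T_i(y)\,s_i$, which is normal c.p.\ and unital because $\tilde\phi_i(1)=\phi_i'(1)+s_i=1$; the correction $T_i(\hat\varphi_i(x))s_i$ it adds to the composition is bounded in $\|\cdot\|_2$ by $\|x\|\,\|s_i\|_2\to0$. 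Finally, to force the density to be invertible, note that because $\phi_i$ has the form $y\mapsto B_i^*yB_i$ the functional $\tau\circ\tilde\phi_i$ has a \emph{bounded} positive $T_i$-density $h_{\tilde\phi_i}$ (the term $T_i(\cdot)s_i$ contributing $\tau(s_i)1$), so for $\delta_i>0$ I set
\[
\phi_i''=\frac{1}{1+\delta_i}\Big(\tilde\phi_i(\cdot)+\delta_i\,T_i(\cdot)1\Big),
\]
which is unital c.p.\ with density $h_i=(1+\delta_i)^{-1}(h_{\tilde\phi_i}+\delta_i 1)\ge\frac{\delta_i}{1+\delta_i}1$, hence positive, bounded and invertible, at the cost of a further error at most $\delta_i\|x\|$ in the composition.

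Taking $\varphi_i:=\hat\varphi_i$, $\phi_i:=\phi_i''$ and reindexing the net by the extra parameters $\epsilon_i,\delta_i\to0$, all the accumulated error terms tend to $0$ in $\|\cdot\|_2$, so $\phi_i''\circ\hat\varphi_i(x)\to x$, giving (1), and (2) holds with the invertible $h_i$ above. I expect the genuine difficulty to lie in unitalizing \emph{both} maps simultaneously: dividing by $\varphi_i(1)^{1/2}$ is the natural unitalization, but it is only legitimate after the $\epsilon_i$-perturbation makes $\varphi_i(1)+\epsilon_i1$ invertible, and one must check that the induced perturbation of the composition stays controllable \emph{without any uniform norm bound on $\phi_i$ or $\varphi_i$}, which is exactly why $\epsilon_i$ has to be chosen relative to $\|\phi_i(1)\|$ along the net. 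The defect-filling step is the other delicate point, and it succeeds precisely because the convergence $\Theta_i(1)\to1$ drives the defect $s_i$ to $0$ in $\|\cdot\|_2$, allowing the map to be made unital without enlarging the matrix size $n_i$.
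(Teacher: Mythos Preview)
Your proposal is correct and follows essentially the same strategy as the paper: perturb $\varphi_i(1)$ to be invertible, conjugate to unitalize $\varphi_i$ while compensating in $\phi_i$, fill the resulting defect of $\phi_i$ with a trace term to make it unital, and use the explicit form $\phi_i(y)=B_i^*yB_i$ from Lemma~\ref{lemma 2.4} to identify a bounded density in $M_{n_i}(\mathbb{C})\otimes N$. The only notable difference is that the paper arranges the defect $b_i=1-\tilde\phi_i(\tilde\varphi_i(1))$ to be \emph{strictly} positive (by scaling with an $\eta_i<1$) so that $\tau(b_i)>0$ already forces $h_i$ to be invertible, whereas you add a separate $\delta_i$-perturbation at the end; both devices work.
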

\begin{proof}
By Lemma \ref{lemma 2.4}, there exists a net of normal c.p. maps $\tilde{\psi}_{i}:M\rightarrow M_{n_{i}}(\mathbb{C})\otimes N$
and a net of normal c.p. maps $\tilde{\phi}_{i}:M_{n_{i}}(\mathbb{C})\otimes N\rightarrow M$ such that
$\tilde{\phi}_{i}\circ\tilde{\psi}_{i}(x)\rightarrow x$ in the $\|\cdot\|_{2}$-norm topology for all $x\in M$ and
$\tilde{\phi}_{i}\circ\tilde{\psi}_{i}(1)\leq 1$.

We can choose $(\eta_{i}), (\epsilon_{i})\subset \mathbb{R}_{+}$, such that
$\eta_{i}\rightarrow 1$, $\epsilon_{i}\tilde{\phi}_{i}(1)\rightarrow 0$ in the operator norm topology,
and $0<\epsilon_{i}\tilde{\phi}_{i}(1)+\eta_{i}<1$. Then we have
$\tilde{\phi}_{i}\circ(\eta_{i}\tilde{\psi}_{i}(x)+\epsilon_{i})\rightarrow x$ in the $\|\cdot\|_{2}$-norm topology for all $x\in M$ and
$\tilde{\phi}_{i}\circ(\eta_{i}\tilde{\psi}_{i}(1)+\epsilon_{i})< 1$. Define $\tilde{\varphi}_{i}(x):=\eta_{i}\tilde{\psi}_{i}(x)+\epsilon_{i}$
and $\varphi_{i}(x):=\tilde{\varphi}_{i}(1)^{-\frac{1}{2}}\tilde{\varphi}_{i}(x)\tilde{\varphi}_{i}(1)^{-\frac{1}{2}}$. Then $\varphi_{i}$ is a normal u.c.p.
map from $M$ to $M_{n_{i}}(\mathbb{C})\otimes N$.

Let $b_{i}=1-\tilde{\phi}_{i}\circ\tilde{\varphi}_{i}(1)$. Since $\tilde{\phi}_{i}\circ\tilde{\varphi}_{i}(1)<1$,
we have $b_{i}> 0$ and $b_{i}\rightarrow 0$ in the $\|\cdot\|_{2}$-norm topology.

Define linear maps
$\phi_{i}:M_{n_{i}}(\mathbb{C})\otimes N\rightarrow M$ by $$\phi_{i}(y)=(tr_{n_{i}}\otimes\tau)(y)b_{i}+ \tilde{\phi}_{i}(\tilde{\varphi}_{i}(1)^{\frac{1}{2}}y\tilde{\varphi}_{i}(1)^{\frac{1}{2}}).$$
Then the $\phi_{i}'$s are normal u.c.p. maps. Since $b_{i}\rightarrow 0$, it follows that
$\phi_{i}\circ\varphi_{i}(x)\rightarrow x$ in the $\|\cdot\|_{2}$-norm topology.

By Lemma \ref{lemma 2.4}, $\tilde{\phi}_{i}(y)=B_{i}^{*}y B_{i}$ for $y\in M_{n_{i}}(\mathbb{C})\otimes N$, $B_{i}\in M_{n_{i}\times 1}(M)$.

For simplicity, write $n=n_i$ and $\tilde{\phi}_{i}$ from $M_{n}(\mathbb{C})\otimes N$ to $M$ in the following form
$$\tilde{\phi}_{i}(y)=\left(\begin{array}{c}
b_{1}\\
\vdots\\
b_{n}
\end{array}
\right)^{*}
\left(\begin{array}{ccc}
y_{11}&\dots&y_{1n}\\
\vdots&&\vdots\\
y_{n1}&\dots&y_{nn}
\end{array}
\right)\left(\begin{array}{c}
b_{1}\\
\vdots\\
b_{n}
\end{array}
\right)=\sum^{n}_{i,j=1}b_{i}^{*}y_{ij}b_{j},$$
where $b_{i}$ is in $M$ and $y=(y_{ij})_{n\times n}$ is in $M_{n}(\mathbb{C})\otimes N$.

Let $h_{ij}=nb_{j}b_{i}^{*}\in M$ and put $\tilde{h}=(h_{ij})_{n\times n}\in M_{n}(\mathbb{C})\otimes M$. Then we have $\tilde{h}\geq0$ and
$$(tr_{n}\otimes\tau)(\tilde{h}y)=\sum^{n}_{i,j=1}\tau(\dfrac{h_{ij}}{n}y_{ji})=\sum^{n}_{i,j=1}\tau(b_{j}b_{i}^{*}y_{ij})=\tau\circ\tilde{\phi}_{i}(y).$$
Since conditional expectation preserves the trace and $y$ is in $M_{n}(\mathbb{C})\otimes N$, we have
$$(tr_{n}\otimes\tau)(\tilde{h}y)=(tr_{n}\otimes\tau)(E_{M_{n}(\mathbb{C})\otimes N}(\tilde{h}y))=(tr_{n}\otimes\tau)(E_{M_{n}(\mathbb{C})\otimes N}(\tilde{h})y).$$

Note that
\begin{align*}
\tau\circ\phi_{i}(y)=&\tau(b_{i})(tr_{n}\otimes\tau)(y)+ \tau\circ\tilde{\phi}_{i}(\tilde{\varphi}_{i}(1)^{\frac{1}{2}}y\tilde{\varphi}_{i}(1)^{\frac{1}{2}})\\
=&(tr_{n}\otimes\tau)(\tau(b_{i})y+\tilde{\varphi}_{i}(1)^{\frac{1}{2}}E_{M_{n}(\mathbb{C})\otimes N}(\tilde{h})\tilde{\varphi}_{i}(1)^{\frac{1}{2}}y).
\end{align*}

Let $h=\tau(b_{i})+\tilde{\varphi}_{i}(1)^{\frac{1}{2}}E_{M_{n}(\mathbb{C})\otimes N}(\tilde{h})\tilde{\varphi}_{i}(1)^{\frac{1}{2}}$.
Since $\tilde{\varphi}_{i}(1)\in M_{n}(\mathbb{C})\otimes N$, $\tilde{h}\geq0$ and $b_{i}>0$, we have that $h\in M_{n}(\mathbb{C})\otimes N$ is positive and invertible. Hence, we finish the proof.
\end{proof}

\section{Main results}

In this section, we extend Haagerup's result \cite[Proposition 3.5]{haagerup} to amenable inclusions in two cases, either the subalgebra $N$ has an atomic part  and the ambient algebra $M$ is a II$_1$ factor or $N$ is a II$_1$ factor.

The first case follows quite easily from \cite[Proposition 3.5]{haagerup}, while the second case is quite involved.

Recall that a von Neumann algebra $N$ has an atomic part means that there exists a nonzero projection $p\in N$ such that $pNp=\mathbb{C}p$.
\begin{theorem}\label{atomic}
Let $M$ be a type II$_{1}$ factor with a faithful normal tracial state $\tau$, and let $N\subset M$ be a von Neumann subalgebra having an atomic part. Let the inclusion $N\subset M$
be amenable. Let $\{x_{1},\ldots,x_{n}\}$ be a finite set in $M$ and let $\varepsilon>0$.
Then there exists an $m\in \mathbb{N}$, and two normal c.p. maps
$S:M\rightarrow M_{m}(\mathbb{C})\otimes N,$ $T:M_{m}(\mathbb{C})\otimes N\rightarrow M,$
such that
\begin{enumerate}
\item $S$ and $T$ are unital,
\item $(tr_{m}\otimes\tau)\circ S=\tau,\quad \tau\circ T=tr_{m}\otimes\tau$,
\item $\|T\circ S(x_{k})-x_{k}\|_{2}<\varepsilon,~k=1,\ldots,n$.
\end{enumerate}
\end{theorem}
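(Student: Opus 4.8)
The plan is to use the single atom of $N$ to force the ambient factor $M$ to be amenable, and then to obtain the two maps by gluing Haagerup's factorization through matrices to the elementary trace-preserving maps relating $M_m(\mathbb{C})$ and $M_m(\mathbb{C})\otimes N$. Let $p\in N$ be a nonzero projection with $pNp=\mathbb{C}p$; since $\tau$ is faithful, $\tau(p)>0$, and $pMp$ is again a II$_1$ factor. The first step is to show that the compressed inclusion $pNp\subset pMp$ is still amenable. This is where the atom does the work: because $pNp=\mathbb{C}p$ is just the scalars of $pMp$, amenability of $\mathbb{C}p\subset pMp$ is, by the first example of amenable inclusions recalled in the introduction ($pMp$ amenable iff $\mathbb{C}p\subset pMp$ amenable), exactly the injectivity of the factor $pMp$. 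That $N\subset M$ amenable descends to the corner follows from Popa's equivalent conditions in \cite[3.23]{popa}: an $M$-central hypertrace on the basic construction $\langle M,e_N\rangle$ compresses by $p$ to a $pMp$-central hypertrace on $p\langle M,e_N\rangle p=\langle pMp,e_{\mathbb{C}p}\rangle$, which witnesses injectivity of $pMp$. Since injectivity is a stable-isomorphism invariant and $pMp$ is a corner of $M$, the factor $M$ is itself injective, hence $M$ is the hyperfinite II$_1$ factor.

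Once $M$ is hyperfinite (equivalently, semidiscrete), Haagerup's \cite[Proposition 3.5]{haagerup}, which is precisely the case $N=\mathbb{C}1$ of the present statement, supplies an integer $k$ and unital, trace-preserving normal c.p. maps $S_0\colon M\to M_k(\mathbb{C})$ and $T_0\colon M_k(\mathbb{C})\to M$ with $\|T_0\circ S_0(x_j)-x_j\|_2<\varepsilon$ for $j=1,\dots,n$. It then remains only to route these through $M_k(\mathbb{C})\otimes N$. I would set $m=k$ and define $S=(\,\cdot\,\otimes 1)\circ S_0$ and $T=T_0\circ(id_k\otimes\tau)$, where $a\mapsto a\otimes 1$ is the unital embedding $M_k(\mathbb{C})\hookrightarrow M_k(\mathbb{C})\otimes N$ and $id_k\otimes\tau$ is the slice map $M_k(\mathbb{C})\otimes N\to M_k(\mathbb{C})$. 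Both auxiliary maps are unital and normal c.p., and the identities $(tr_m\otimes\tau)(a\otimes 1)=tr_k(a)$ and $tr_k\big((id_k\otimes\tau)(y)\big)=(tr_m\otimes\tau)(y)$ show that $S$ and $T$ inherit conditions (1) and (2) from $S_0$ and $T_0$. Finally $(id_k\otimes\tau)\big(S_0(x)\otimes 1\big)=S_0(x)$, so $T\circ S=T_0\circ S_0$ and condition (3) holds verbatim.

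The only genuinely nontrivial point is the reduction in the first paragraph, namely that an amenable inclusion together with a single atom in $N$ already forces $M$ to be amenable; everything afterwards is bookkeeping with the elementary maps $\,\cdot\,\otimes 1$ and $id_k\otimes\tau$, which is why this case ``follows quite easily'' from \cite[Proposition 3.5]{haagerup}. The delicate part is thus justifying that relative amenability is inherited by the corner $pNp\subset pMp$. I would argue this through the hypertrace picture above, or equivalently through the correspondence definition: the atom makes $L^2(M)p\otimes_{pNp}pL^2(M)$ a multiple of the coarse $pMp$-bimodule, so the weak containment $H_{id}\prec H_{E_N}$ compresses to weak containment of the trivial $pMp$-correspondence in the coarse one, i.e.\ injectivity of $pMp$. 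If one wished to bypass the compression lemma altogether, an alternative would be to perform the density-removal of Proposition \ref{1st theorem} directly, using the atom $e_{11}\otimes p$ of $M_{n_i}(\mathbb{C})\otimes N$ in place of the rational blow-up in Haagerup's matrix argument; but the route through amenability of $M$ is the shortest.
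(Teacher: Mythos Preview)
Your proposal is correct and follows essentially the same route as the paper: use the atom $p$ to pass to the amenable inclusion $\mathbb{C}p\subset pMp$ via \cite[Theorem 3.23]{popa}, conclude $pMp$ (hence $M$) is hyperfinite, apply \cite[Proposition 3.5]{haagerup}, and then sandwich with the maps $x\mapsto x\otimes 1$ and $id_k\otimes\tau$. The only cosmetic differences are that the paper cites \cite[Theorem 3.23]{popa} directly for the corner inheritance rather than sketching the hypertrace argument, and passes hyperfiniteness from $pMp$ to $M$ by choosing a subprojection $e\leq p$ with $\tau(e)=1/k$ and writing $M\cong M_k(\mathbb{C})\otimes eMe$, rather than invoking stable-isomorphism invariance of injectivity.
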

\begin{proof}
Assume $p$ is a projection in $N$ such that $pNp=\mathbb{C}p$.  By \cite[Theorem 3.23]{popa}, we have that $\mathbb{C}p\subset pMp$ is amenable, which shows that $pMp$ is a hyperfinite type II$_1$ factor.
We can find a projection $e$ in $M$ such that $e\leq p$ and $\tau(e)=\frac{1}{k}$ for some positive integer $k$.
It follows that $M$ is a hyperfinite type II$_1$ factor, since $M=M_{k}(\mathbb{C})\otimes eMe$ and $eMe$ is a
hyperfinite type II$_1$ factor.

Let $\{x_{1},\ldots,x_{n}\}$ be a finite set in $M$ and let $\varepsilon>0$. By \cite[Proposition 3.5]{haagerup},
there exists an $m\in \mathbb{N}$, and two normal u.c.p. maps
$S_{1}:M\rightarrow M_{m}(\mathbb{C}),\quad T_{1}:M_{m}(\mathbb{C})\rightarrow M,$
such that $tr_{m}\circ S_{1}=\tau,\quad \tau\circ T_{1}=tr_{m}$
and $\|T_{1}\circ S_{1}(x_{k})-x_{k}\|_{2}<\varepsilon,~k=1,\ldots, n.$

Define two normal unital c.p. maps $S_{2}$ from $M_{m}(\mathbb{C})$ to $M_{m}(\mathbb{C})\otimes N$ and $T_{2}$ from
$M_{m}(\mathbb{C})\otimes N$ to $M_{m}(\mathbb{C})$ respectively by
$$S_{2}(x)=x\otimes 1,\quad T_{2}(y\otimes z)=\tau(z)y,\quad x,y\in M_{m}(\mathbb{C}), z\in M.$$
Put $S=S_{2}\circ S_{1}$, $T=T_{1}\circ T_{2}$. Then
$S:M\rightarrow M_{m}(\mathbb{C})\otimes N,\quad T:M_{m}(\mathbb{C})\otimes N\rightarrow M$
are two normal unital c.p. maps.

Note that for $x\in M$, $y\in M_{m}(\mathbb{C})$ and $z\in N$,
\begin{align*}
(tr_{m}\otimes\tau)(S(x))&=(tr_{m}\otimes\tau)(S_{1}(x)\otimes 1)\\
&=tr_{m}\circ S_{1}(x)\\
&=\tau(x)
\end{align*}
and
\begin{align*}
\tau\circ T(y\otimes z)&=\tau\circ T_{1}(y\tau(z))\\
&=\tau(z)\tau(T_{1}(y))\\
&=(tr_{m}\otimes\tau)(y\otimes z).
\end{align*}
Moreover,
$\|T\circ S(x)-x\|_{2}=\|T_{1}\circ S_{1}(x)-x\|_{2}.$
Hence we finish the proof.
\end{proof}

\begin{theorem}\label{diffuse}
Let $M$ be a finite von Neumann algebra with a faithful normal tracial state $\tau$, and let $N\subset M$ be a type II$_{1}$ factor. Let the inclusion $N\subset M$
be amenable. Let $\{x_{1},\ldots,x_{n}\}$ be a finite set in $M$ and let $\varepsilon>0$.
Then there exists an $m\in \mathbb{N}$, and two normal c.p. maps
$S:M\rightarrow M_{m}(\mathbb{C})\otimes N,$ $T:M_{m}(\mathbb{C})\otimes N\rightarrow M,$
such that
\begin{enumerate}
\item $S$ and $T$ are unital,
\item $(tr_{m}\otimes\tau)\circ S=\tau,\quad \tau\circ T=tr_{m}\otimes\tau$,
\item $\|T\circ S(x_{k})-x_{k}\|_{2}<\varepsilon,~k=1,\ldots,n$.
\end{enumerate}
\end{theorem}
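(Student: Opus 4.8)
The plan is to start from Proposition \ref{1st theorem}. Fixing $\{x_1,\dots,x_n\}$ and $\varepsilon$, I would choose an index $i$ far enough along the net so that, writing $Q=M_{n}(\mathbb{C})\otimes N$, $\varphi=\varphi_i$, $\phi=\phi_i$, $h=h_i$, one has $\|\phi\circ\varphi(x_k)-x_k\|_2<\varepsilon/2$ together with $\tau\circ\phi=(tr_{n}\otimes\tau)(h\,\cdot)$ for a positive invertible $h\in Q$. Since $\phi$ is unital and $\tau$ a state, $(tr_n\otimes\tau)(h)=\tau\circ\phi(1)=1$; dually $\varphi$ is unital but only co-trace-preserving up to a density, $(tr_n\otimes\tau)\circ\varphi=\tau(g\,\cdot)$ with $g=\varphi^{*}(1)\geq 0$, $\tau(g)=1$. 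Thus the sole obstructions to conditions (1)-(2) of the theorem are $h\neq 1$ and $g\neq 1$, and the whole task is to absorb these two densities into the maps, at the cost of enlarging the matrix size from $n$ to some $m$, while keeping everything unital and completely positive.

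The heart of the argument is a flattening of the density $h$ carried out inside the II$_1$ factor $Q=M_n(\mathbb{C})\otimes N$ (it is a II$_1$ factor precisely because $N$ is). First I would replace $h$ by a finite-spectrum approximant $h'=\sum_{k=1}^{K}\mu_k e_k$, where the $e_k$ are pairwise orthogonal spectral projections with $\sum_k e_k=1$ and $\sum_k\mu_k\,(tr_n\otimes\tau)(e_k)=1$, chosen close enough to $h$ that the induced perturbation of $\phi$ moves each $\phi\circ\varphi(x_k)$ by less than $\varepsilon/2$ in $\|\cdot\|_2$. Using that $Q$ and $M_m(\mathbb{C})\otimes N$ are II$_1$ factors, I would then choose an integer $m$ and pairwise orthogonal projections $f_k\in M_m(\mathbb{C})\otimes N$ with $\sum_k f_k=1$ and $(tr_m\otimes\tau)(f_k)=\mu_k\,(tr_n\otimes\tau)(e_k)$; the existence of projections with these prescribed real traces is exactly where continuous dimension is used. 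Next I would build a completely positive, trace-preserving map $D:Q\to M_m(\mathbb{C})\otimes N$ carrying each corner $e_kQe_k$ into $f_k(M_m(\mathbb{C})\otimes N)f_k$ with normalization $D(e_k)=\mu_k^{-1}f_k$, so that $D(h')=\sum_k f_k=1$ while $(tr_m\otimes\tau)\circ D=(tr_n\otimes\tau)$ on each corner. Setting $T:=\phi\circ D^{*}$, where $D^{*}$ is the (unital) trace-adjoint of $D$, one gets a unital completely positive map with $\tau\circ T(y)=(tr_n\otimes\tau)(h\,D^{*}(y))=(tr_m\otimes\tau)(y\,D(h))=(tr_m\otimes\tau)(y)$, i.e. $T$ is trace-preserving.

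For $S$ I would apply the same device dually to the co-trace density $g$ of $\varphi$: the identical subdivision-and-spreading argument, performed on the target side of the enlarged factor $M_m(\mathbb{C})\otimes N$, produces a unital completely positive map that is co-trace-preserving, and I would take $S$ to be the composition of $\varphi$ with this spreading map, renormalized by the $\tilde\varphi(1)^{-1/2}(\cdot)\tilde\varphi(1)^{-1/2}$ trick of Proposition \ref{1st theorem} to restore unitality. The two enlargements must be arranged inside a single common $M_m(\mathbb{C})\otimes N$ so that $D^{*}$ admits a unital section compatible with the $S$-side spreading, ensuring $T\circ S$ still approximates $\phi\circ\varphi$, and hence each $x_k$, to within $\varepsilon$; verifying $(tr_m\otimes\tau)\circ S=\tau$, $\tau\circ T=tr_m\otimes\tau$, and the error estimate is then routine bookkeeping once the spreading maps are in place.

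The step I expect to be the main obstacle is precisely this exact flattening of a density to the identity. In Haagerup's original hyperfinite case \cite{haagerup} one is forced to approximate the eigenvalues of $h$ by rationals and pad with integer block sizes, which yields only approximate trace-preservation; the essential new ingredient here—and the reason the hypothesis that $N$ be a II$_1$ factor is exactly what is needed—is that continuous dimension lets one subdivide spectral projections into pieces of any prescribed trace, so the Haagerup staircase can be realized exactly rather than approximately. Carrying this out simultaneously for the two dual densities $h$ and $g$ within one matrix amplification $M_m(\mathbb{C})\otimes N$, without destroying unitality, complete positivity, or the approximation $T\circ S(x_k)\approx x_k$, is the delicate part at the core of the proof.
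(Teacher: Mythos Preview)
Your approach diverges from the paper's and has a genuine gap at precisely the point you flag as delicate. The paper does not flatten the density $h$ exactly by a spreading map; instead it (i) uses the II$_1$ factor hypothesis on $N$ to reduce $h$ to a rational ``good'' simple operator of \emph{scalar form} in $M_n(\mathbb{C})\otimes N$ (Lemmas~\ref{1 lemma} and~\ref{2}---this, not exact subdivision, is where continuous dimension enters), then (ii) applies Haagerup's integer-block staircase (Lemma~\ref{4}), which already gives \emph{exact} trace preservation $\phi\circ S=tr_q\otimes\tau$, $(tr_q\otimes\tau)\circ T=\phi$, but at the cost of an error $\|S\circ T(x)-x\|_\phi^\sharp\leq\|h^{1/2}x-xh^{1/2}\|_2$. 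The heart of the argument is that this commutator is small when $x=S_1(u_k)$ for a unitary $u_k$ with $\|T_1(x)-u_k\|_2$ small, via Lemma~\ref{3}(2) and the estimate $\|h^{1/2}x_k-x_kh^{1/2}\|_2^2\leq 2-2(tr_m\otimes\tau)(h^{1/2}x_kh^{1/2}x_k^*)\leq 2-2\|T_1(x_k)\|_2^2$. Crucially, $S$ is never built by flattening a second density: it is the trace-adjoint of the already trace-preserving $T$ (Lemma~\ref{3}(1)), which makes it automatically unital, c.p., and trace-preserving, and the approximation $T\circ S(u_k)\approx u_k$ is recovered by a separate duality estimate.

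Your spreading map $D$ with $D(h')=1$ faces two concrete obstructions. First, to define $D$ blockwise you need isomorphisms $e_kQe_k\cong f_k(M_m(\mathbb{C})\otimes N)f_k$ between corners of possibly different trace, which is a fundamental-group condition on $N$ not guaranteed by the hypotheses. Second, even granting such a $D$ built from corner isomorphisms $\theta_k$ scaled by $\mu_k^{-1}$, one computes $D^*(y)=\sum_k\theta_k^{-1}(f_kyf_k)$ and hence $D^*\circ D(x)=\sum_k\mu_k^{-1}e_kxe_k$, which is far from the identity; so $T=\phi\circ D^*$ composed with any $S$ factoring through $D$ no longer approximates $\phi\circ\varphi$. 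Your fallback of correcting the co-trace density $g=\varphi^*(1)\in M$ by post-composing $\varphi$ with a map $E$ on the \emph{target} side cannot work either: if $(tr_m\otimes\tau)\circ E=(tr_n\otimes\tau)(k\,\cdot)$ for some $k\in Q$, then $(tr_m\otimes\tau)\circ E\circ\varphi=\tau$ forces $\varphi^*(k)=1$ in $L^1(M)$, an equation in $M$ that a choice of $k\in Q$ has no reason to solve, let alone in a way compatible with $D^*$. The missing idea is Haagerup's: do not try to kill the density without error---accept a commutator-controlled error and show it is small because you are approximating unitaries.
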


For the sake of proving Theorem \ref{diffuse}, we introduce the following definitions.

For any normal state $\phi$ on a von Neumann algebra $M$, we put
$$\|x\|_{\phi}^{\sharp}=\phi(\frac{x^{*}x+xx^{*}}{2})^{\frac{1}{2}}, \quad \mbox{for~~} x \in M.$$
\emph{A ``good" simple operator} in a type II$_{1}$ factor means an operator with the form $\sum_{i=1}^{n}\lambda_{i}e_{i}$,
where $\lambda_{i}\in \mathbb{C}$ and $e^{}_{1},\ldots, e^{}_{n}$ are equivalent mutually orthogonal projections with $\sum_{i=1}^{n}e_{i}=1$. \emph{A rational positive ``good'' simple operator} is a positive ``good'' simple operator with rational numbers as coefficients.  a ``good'' simple operator $h$ in $M_{m}(\mathbb{C})\otimes N$ is of \emph{``scalar form''} if $h=\sum_{1\leq i\leq m}f_{ii}\otimes \lambda_{i, i}1_{N}$, where $\{f_{ij}\}_{1\leq i,j\leq m}$ are the matrix units in $M_{m}(\mathbb{C})$, $\lambda_{i, i}\in \mathbb{C}$ and $1_{N}$ is the identity operator in $N$.

Our strategy to prove Theorem \ref{diffuse} is to mimic Haagerup's proof of \cite[Proposition 3.5]{haagerup}. To use Haagerup's techniques, we first need Lemma \ref{1 lemma} and Lemma \ref{2}.

 Using Proposition \ref{1st theorem} in our paper, we deduce that for any $\varepsilon>0$, there exist
 two normal u.c.p. maps $S:M\rightarrow M_{n}(\mathbb{C})
 \otimes N$, $T:M_{n}(\mathbb{C})\otimes N\rightarrow M$ such that for all $x\in M$, $\|T\circ S(x)-x\|_{2}< \varepsilon$ and  $\tau\circ T(x)=(tr_{n}\otimes\tau)(hx)$, where $h$ is a positive invertible element in $M_{n}(\mathbb{C})\otimes N$. Then, using a result of Kadison in \cite{Kadison}, we can assume $h$ is of diagonal form in $M_{n}(\mathbb{C})\otimes N$. In Haagerup's situation, $N=\mathbb{C}$, so $h$ is always of scalar form, but in general, this $h$ may not be of scalar form.
 Note that in Haagerup's assumptions, he dealt with $h\in M_{m}(\mathbb{C})$, which is of scalar form.
 If $N$ is a diffuse finite factor, then we can assume that $h$ is a ``good'' simple operator and we can also make a perturbation of $h$ to assume its coefficients to be rational, this is our Lemma \ref{1 lemma}. In Lemma \ref{2}, we amplify $M_{n}(\mathbb{C})\otimes N$ to $M_{k}(\mathbb{C})\otimes M_{n}(\mathbb{C})\otimes N$, and in this larger algebra, $h$ can be written in scalar form. 

\begin{lemma}\label{1 lemma}
Let $M$ be a finite von Neumann algebra with a faithful normal trace $\tau_{M}$, and let $N$ be
 a type II$_{1}$ factor with trace $\tau_{N}$. Let
$T:N\rightarrow M$ be a normal u.c.p. map such that $$\tau_{M}\circ T(y)=\tau_{N}(yh),~~\forall~y\in N,$$
and let $h$ be an invertible positive operator in $N$.
For any $y_{1},\ldots,y_{n}\in N$ and any $\varepsilon >0$, there exists a normal u.c.p. map
$T'$ from $N$ to $M$ such that $$\|T(y_{i})-T'(y_{i})\|_{2}<\varepsilon \mbox{~~and~~}\tau_{M}\circ T'(y)=\tau_{N}(h'y)$$ for $1\leq i\leq n$
 and all $y\in N$, where $h'$ is an invertible rational positive ``good'' simple operator in $N$.
\end{lemma}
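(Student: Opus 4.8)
The plan is to perturb the weight $h$ to a rational positive ``good'' simple operator $h''$ lying in a diffuse abelian subalgebra of $N$, and to absorb the resulting discrepancy into $T$ by conjugating $T$ by a suitable element of $N$ together with a correction term that restores unitality. Throughout I use that $T$ unital forces $\tau_N(h)=\tau_M(T(1))=1$, and that Kadison--Schwarz gives $\|T(z)\|_2^2\le\tau_M(T(z^*z))=\tau_N(z^*zh)\le\|h\|\,\|z\|_2^2$, so $T$ is $\|\cdot\|_2$-bounded with constant $\|h\|^{1/2}$.

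First I would choose the approximant. Fix a diffuse abelian subalgebra $A\subset N$ with $h\in A$ (e.g. a masa containing $h$) and identify $(A,\tau_N|_A)\cong(L^\infty[0,1],\int\cdot\,dt)$, so that $h$ corresponds to a function $f$ with $\delta\le f\le R$ for some $0<\delta\le R$. Partitioning $[0,1]$ into the level sets of $f$ at scale $\epsilon$ and then re-cutting into $n$ measurable pieces $B_1,\dots,B_n$ of equal measure $1/n$, I set $e_i=\chi_{B_i}$ and $h'=\sum_{i=1}^n\lambda_i e_i$, where $\lambda_i$ is a positive rational just below $\inf_{B_i} f$. Since the $e_i$ are mutually orthogonal equal-trace projections summing to $1$ (hence equivalent in the factor $N$) and the $\lambda_i$ are positive rationals, $h'$ is an invertible rational positive ``good'' simple operator, and by construction $h'\le h$. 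Only the pieces straddling a level boundary carry large oscillation; their total measure is $\le L/n$ (with $L$ the number of levels), so for $\epsilon$ small and $n$ large one makes $\|h-h'\|_2$ as small as desired. Note that operator-norm closeness is in general \emph{unavailable} (if $h$ has a spectral projection of irrational trace, no ``good'' simple operator is operator-norm close to it), which is why I keep everything inside $A$: there $h$ and $h'$ commute, so the square-root estimates below reduce to scalar calculus and the inequality $h'\le h$ is arranged pointwise.

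Then I would define the perturbed map. Put $g=h'^{1/2}h^{-1/2}\in A$ and $b=1_M-T(g^*g)$. Because $h'\le h$ we have $g^*g=h^{-1}h'\le 1$, so $b\ge 0$, and I define
$$T'(y)=T(g^*yg)+\tau_N(y)\,b,\qquad y\in N.$$
This is a normal c.p. map (a composition of the c.p. conjugation $y\mapsto g^*yg$ with $T$, plus the c.p. map $y\mapsto\tau_N(y)b$), it is unital since $T'(1)=T(g^*g)+b=1_M$, and a direct computation using $ghg^*=h'$ and $\tau_M(b)=1-\tau_N(h')$ gives
$$\tau_M\circ T'(y)=\tau_N(yh')+\tau_N(y)\bigl(1-\tau_N(h')\bigr)=\tau_N\bigl(y\,h''\bigr),\quad h'':=h'+\bigl(1-\tau_N(h')\bigr)1_N.$$
Since $h''=\sum_i\bigl(\lambda_i+1-\tau_N(h')\bigr)e_i$ with $\tau_N(h')=\tfrac1n\sum_i\lambda_i$ rational and $0\le 1-\tau_N(h')$, the operator $h''$ is again an invertible rational positive ``good'' simple operator; this is the $h'$ of the statement.

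Finally I would estimate the perturbation. Writing $\|T(y_i)-T'(y_i)\|_2\le\|T(y_i-g^*y_ig)\|_2+|\tau_N(y_i)|\,\|b\|_2$, the first term is at most $\|h\|^{1/2}\|y_i-g^*y_ig\|_2$ by the Kadison--Schwarz bound, and $\|y_i-g^*y_ig\|_2\to 0$ since $\|g\|\le(R/\delta)^{1/2}$ stays bounded while $\|1-g\|_2\le\delta^{-1/2}\|h^{1/2}-h'^{1/2}\|_2\to0$ (a scalar Lipschitz estimate for $\sqrt{\cdot}$ on $[\delta,R]$, legitimate since $h,h'$ commute); the second term is controlled by $0\le b\le 1$, whence $\|b\|_2^2\le\tau_M(b)=1-\tau_N(h')\to 0$. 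Choosing $\|h-h'\|_2$ small enough at the outset makes both terms $<\varepsilon$ for every $i$. The one genuinely delicate point, and the main obstacle, is the approximation step: one must settle for $\|\cdot\|_2$-approximation rather than operator-norm approximation, yet simultaneously secure the operator inequality $h'\le h$ needed for $b\ge0$; working inside a diffuse abelian $A\ni h$ and taking the $\lambda_i$ below the infima of $f$ resolves both at once.
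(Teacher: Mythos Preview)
Your argument is correct and follows essentially the same route as the paper: perturb $h$ to a commuting ``good'' simple operator $h'\le h$, set $g=(h'/h)^{1/2}$, and define $T'(y)=T(g\,y\,g)+\tau_N(y)\,(1-T(g^2))$, which is exactly the paper's map $T_k(y)=T(b_k^{1/2}yb_k^{1/2})+\tau_N(y)T(1-b_k)$ with $b_k=h'/h$. The only cosmetic difference is that the paper carries out the approximation in two passes (first a good simple $h_k\le h$, then a second perturbation to rationalize the coefficients), whereas you pick rational $\lambda_i$ below $\inf_{B_i}f$ at the outset and get rationality and the inequality $h'\le h$ in one stroke; both yield the same final density $h''=h'+(1-\tau_N(h'))1$.
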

\begin{proof}
Since $h$ is an invertible positive operator in the type II$_{1}$ factor $N$, we can identify $h$ with a positive
function $h(t)$,
$0\leq t\leq 1$ and assume that $h(t)\geq \delta>0$ for all $t$. Since $N$ is a type II$_{1}$ factor, there exists a sequence of ``good'' simple operators
$h_{k}=h_{k}(t)$ with the property that
\begin{enumerate}
\item $\delta\leq h_{k}(t)\leq h(t)$ for all $t$, $0\leq t \leq 1$;
\item $lim_{k\rightarrow\infty}h_{k}(t)=h(t)$ for almost all $t$, $0\leq t\leq 1$.
\end{enumerate}

Assume $\|h-h_{k}\|_{1}<\varepsilon$ for some $\varepsilon>0$. Let $b_{k}=b_{k}(t)=\frac{h_{k}(t)}{h(t)}$. Then $0<b_{k}(t)\leq 1$ for all $0\leq t\leq 1$.  Note that
$$\|1-b_{k}\|_{1}=\tau(1-b_{k})=\int_{0}^{1}\frac{h(t)-h_{k}(t)}{h(t)}dt\leq\frac{1}{\delta}\|h-h_{k}\|_{1}<\frac{\varepsilon}{\delta},$$
and $$\|1-b_{k}\|_{2}^{2}=\tau((1-b_{k})^{2})=\int_{0}^{1}\frac{(h(t)-h_{k}(t))^{2}}{(h(t))^{2}}dt\leq\frac{2\|h\|}{\delta^{2}}
\|h-h_{k}\|_{1}<\frac{2\|h\|}{\delta^{2}}
\varepsilon.$$

Define $T_{k}:N\rightarrow M$ by
$$T_{k}(y)=T(b_{k}^{\frac{1}{2}}yb_{k}^{\frac{1}{2}})+\tau_{N}(y)T(1-b_{k}),\quad \mbox{for~~} y\in N.$$
Then $T_{k}$ is a normal u.c.p. map. Note that $b_{k}$ commutes with $h$, so for $y\in N$, we deduce
\begin{align*}
\tau_{M}\circ T_{k}(y)=&\tau_{M}\circ T(b_{k}^{\frac{1}{2}}yb_{k}^{\frac{1}{2}})+\tau_{N}(y)\tau_{M}(T(1-b_{k}))\\
=&\tau_{N}(hb_{k}^{\frac{1}{2}}yb_{k}^{\frac{1}{2}})+\tau_{N}(y)\tau_{N}(h(1-b_{k}))\\
=&\tau_{N}(h'_{k}y),
\end{align*}
where $h'_{k}=hb_{k}+\tau_{N}(h(1-b_{k}))1=h_{k}+\tau_{N}(h(1-b_{k}))1$
is an invertible positive ``good'' simple operator.

By the Schwartz inequality for c.p. maps, we have for $y\in N$,
$$\|T(y)\|_{2}=\tau_{M}(T(y^{*})T(y))^{\frac{1}{2}}\leq\tau_{M}(T(y^{*}y)))^{\frac{1}{2}}=\tau_{N}(hy^{*}y)^{\frac{1}{2}}\leq\|
h\|^{\frac{1}{2}}\|y\|_{2}.$$
By \cite[Proposition 1.2.1]{cones}, we have $\|1-b_{k}^{\frac{1}{2}}\|_{2}\leq \|1-b_{k}\|^{\frac{1}{2}}_{1}$.
Moreover, for $1\leq i\leq n$,
\begin{align*}
\|T_{k}(y_{i})-T(y_{i})\|_{2}&\leq\|T(y_{i}-b_{k}^{\frac{1}{2}}y_ib_{k}^{\frac{1}{2}})\|_{2}+|\tau_{N}(y_{i})|\|T(1-b_{k})\|_{2}\\
&\leq\|T(y_{i}(1-b_{k}^{\frac{1}{2}}))\|_{2}+\|T((1-b_{k}^{\frac{1}{2}})y_{i}b_{k}^{\frac{1}{2}})\|_{2}+|\tau_{N}(y_{i})|\|T(1-b_{k})\|_{2}\\
&\leq\|h\|^{\frac{1}{2}}(\|y_{i}(1-b_{k}^{\frac{1}{2}})\|_{2}+\|(1-b_{k}^{\frac{1}{2}})y_{i}b_{k}^{\frac{1}{2}}\|_{2}+|\tau_{N}(y_{i})|\|1-b_{k}\|_{2})\\
&\leq2\|h\|^{\frac{1}{2}}\|y_{i}\|\|1-b_{k}\|^{\frac{1}{2}}_{1}+|\tau_{N}(y_{i})|\|h\|^{\frac{1}{2}}\|1-b_{k}\|_{2}\\
&\rightarrow 0.
\end{align*}

Next we want to make a perturbation of the invertible positive ``good'' simple operator $h_{k}'$ to get rational coefficients.

Note that $h_{k}'\in N$ is an invertible positive ``good'' simple operator and $\tau_{M}\circ T_{k}(1)=\tau_{N}(h'_{k})=1$. Let $\lambda_{1},\ldots,\lambda_{m}$ be the diagonal elements of $h_{k}'$. Then we have $\lambda_{i}>0\mbox{~~and~~} \sum_{i=1}^{m}\lambda_{i}=m.$

Choose rational numbers $q_{1},\ldots,q_{m}$ such that $(1-\varepsilon)\lambda_{i}<q_{i}<\lambda_{i}$. Put $u_{i}=\frac{q_{i}}{\lambda_{i}}$ for $i=1,\ldots,m$.
Moreover, let $s$ be the diagonal matrix with the diagonal elements $u_{1},\ldots,u_{m}$. Then $1-\varepsilon <s<1$.
Define a map $T'$ from $N$ to $M$
by $$T'(x)=T_{k}(s^{\frac{1}{2}}xs^{\frac{1}{2}})+\tau_{N}(x)T_{k}(1-s).$$ Then $T'$ is a normal u.c.p. map and
\begin{align*}
\|T_{k}(x)-T'(x)\|&\leq\|s^{\frac{1}{2}}xs^{\frac{1}{2}}-x\|+\|1-s\|\|x\|\\
&=\frac{1}{2}\|(1+s^{\frac{1}{2}})x(1-s^{\frac{1}{2}})+(1-s^{\frac{1}{2}})x(1+s^{\frac{1}{2}})\|+\|1-s\|\|x\|\\
&\leq(\|1+s^{\frac{1}{2}}\|\|1-s^{\frac{1}{2}}\|+\|1-s\|)\|x\|\\
&<3\varepsilon\|x\|.
\end{align*}
We have $\|T'-T_{k}\|\rightarrow 0 \mbox{~and~}(\tau\circ T')(x)=\tau_{N}(h'x),$ where $h'=s^{\frac{1}{2}}h_{k}'s^{\frac{1}{2}}+\tau_{N}((h_{2}(1-s))$.
Let $l_{1},\ldots,l_{m}$ be the diagonal elements of $h'$.
Note that $\tau_{N}(h_{k}'s)=\sum_{i=1}^{m}\frac{q_{i}}{m}$. Then we have $l_{i}=q_{i}+(1-\sum_{i=1}^{m}\frac{q_{i}}{m})>0$ and is rational.

Then for $1\leq i\leq n$, we get
\begin{align*}
\|T(y_{i})-T'(y_{i})\|_{2}&\leq \|T(y_{i})-T_{k}(y_{i})\|_{2}+\|T_{k}(y_{i})-T'(y_{i})\|_{2}\\
& \rightarrow 0.
\end{align*}Hence we finish the proof.
\end{proof}

\begin{lemma}\label{2}
Let $M$ be a finite von Neumann algebra with a faithful normal tracial state $\tau$, and let $N\subset M$ be a type II$_{1}$ factor . Let the inclusion $N\subset M$
be amenable. Let $\{x_{1},\ldots,x_{m}\}$ be a finite set in $M$ and let $\varepsilon>0$.
Then there exists an $n\in \mathbb{N}$, and two normal u.c.p. maps
$S:M\rightarrow M_{n}(\mathbb{C})\otimes N,$ $T:M_{n}(\mathbb{C})\otimes N\rightarrow M,$
such that
\begin{enumerate}
\item $\tau\circ T(y)=(tr_{n}\otimes\tau)(hy)$, where $y,h\in M_{n}(\mathbb{C})\otimes N$ and $h$ is an invertible rational positive ``good'' simple operator, furthermore, it is of ``scalar form'',
\item $\|T\circ S(x_{i})-x_{i}\|_{2}<\varepsilon,~i=1,\ldots,m$.
\end{enumerate}
\end{lemma}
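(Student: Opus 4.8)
The plan is to start from the approximate factorization produced by Proposition \ref{1st theorem} and then successively improve the positive element implementing the trace of the ``backward'' map until it reaches scalar form, keeping the composition essentially unchanged at each step. Concretely, applying Proposition \ref{1st theorem} to the amenable inclusion $N\subset M$ and choosing an index far enough along the net, I first obtain normal u.c.p. maps $S_{0}:M\rightarrow M_{r}(\mathbb{C})\otimes N=:P$ and $T_{0}:P\rightarrow M$ with $\tau\circ T_{0}(y)=(tr_{r}\otimes\tau)(h_{0}y)$ for a positive invertible $h_{0}\in P$ and $\|T_{0}\circ S_{0}(x_{i})-x_{i}\|_{2}<\varepsilon/2$ for $i=1,\ldots,m$. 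The crucial observation is that $P=M_{r}(\mathbb{C})\otimes N$ is again a type II$_{1}$ factor, so Lemma \ref{1 lemma} applies verbatim to $T_{0}:P\rightarrow M$ with $P$ in the role of the factor there (this is why no separate diagonalization of $h_{0}$ is needed). Feeding in the finite set $\{S_{0}(x_{1}),\ldots,S_{0}(x_{m})\}\subset P$, I obtain a normal u.c.p. map $T_{1}:P\rightarrow M$ with $\tau\circ T_{1}(y)=(tr_{r}\otimes\tau)(h_{1}y)$, where $h_{1}$ is an invertible rational positive ``good'' simple operator in $P$ and $\|T_{0}(S_{0}(x_{i}))-T_{1}(S_{0}(x_{i}))\|_{2}<\varepsilon/2$; setting $S_{1}=S_{0}$ this keeps $\|T_{1}\circ S_{1}(x_{i})-x_{i}\|_{2}<\varepsilon$.

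Next I would carry out the amplification that turns the ``good'' simple operator into a scalar-form one. Write $h_{1}=\sum_{\ell=1}^{s}\lambda_{\ell}e_{\ell}$ with $\lambda_{\ell}>0$ rational and $e_{1},\ldots,e_{s}$ equivalent mutually orthogonal projections summing to $1$, so $\tau_{P}(e_{\ell})=1/s$. Choosing $k$ with $s\mid kr$ (for instance $k=s$), I amplify to $Q:=M_{k}(\mathbb{C})\otimes P=M_{kr}(\mathbb{C})\otimes N$ via the unital trace preserving inclusion $\pi(z)=1_{k}\otimes z$ and the unital trace preserving conditional expectation $\mathcal{E}=tr_{k}\otimes id_{P}$, and set $\hat{S}=\pi\circ S_{1}$, $\hat{T}=T_{1}\circ\mathcal{E}$. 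Since $\mathcal{E}\circ\pi=id_{P}$ one has $\hat{T}\circ\hat{S}=T_{1}\circ S_{1}$, so the approximation is untouched, while a direct computation gives $\tau\circ\hat{T}(w)=(tr_{kr}\otimes\tau)(\hat{h}w)$ with $\hat{h}=1_{k}\otimes h_{1}=\sum_{\ell}\lambda_{\ell}(1_{k}\otimes e_{\ell})$. In the II$_{1}$ factor $Q$ each projection $1_{k}\otimes e_{\ell}$ has trace $1/s=(kr/s)/(kr)$, hence the same trace as a rank-$(kr/s)$ matrix projection $q_{\ell}\otimes 1_{N}$ with $q_{\ell}\in M_{kr}(\mathbb{C})$; choosing the $q_{\ell}$ mutually orthogonal and summing to $1$, comparison of projections in $Q$ produces a unitary $w\in Q$ with $w(1_{k}\otimes e_{\ell})w^{*}=q_{\ell}\otimes 1_{N}$. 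Absorbing into $w$ a scalar unitary diagonalizing $\sum_{\ell}\lambda_{\ell}q_{\ell}\in M_{kr}(\mathbb{C})$, I obtain $w\hat{h}w^{*}=\sum_{j}\mu_{j}g_{jj}\otimes 1_{N}$ in scalar form, the $\mu_{j}$ being among the rational numbers $\lambda_{\ell}$ and hence positive and rational.

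Finally I conjugate: define $T(w')=\hat{T}(w^{*}w'w)$ and $S(x)=w\hat{S}(x)w^{*}$. Both are normal u.c.p. (being compositions of normal u.c.p. maps with the $*$-automorphism $\mathrm{Ad}(w)$), the identity $T\circ S=\hat{T}\circ\hat{S}$ preserves the estimate $\|T\circ S(x_{i})-x_{i}\|_{2}<\varepsilon$, and $\tau\circ T(w')=(tr_{kr}\otimes\tau)(w\hat{h}w^{*}\,w')=(tr_{kr}\otimes\tau)(hw')$ with $h=w\hat{h}w^{*}$ the desired invertible rational positive ``good'' simple operator of scalar form; renaming $n:=kr$ completes the argument. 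The main obstacle is precisely this amplification step: one must convert a ``good'' simple operator whose spectral projections live over $N$ into one whose spectral projections are honest matrix units over $\mathbb{C}$, and the only reason this is possible is the rationality of the coefficients secured in Lemma \ref{1 lemma}, which makes the traces $1/s$ match integer-rank matrix projections after amplifying by a suitable $k$. Checking that the trace identity for the backward map transforms correctly under both the amplification and the unitary conjugation, while unitality, normality, complete positivity, and the two-sided approximation all survive, is where the care is needed.
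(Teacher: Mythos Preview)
Your proof is correct and follows essentially the same route as the paper's: apply Proposition \ref{1st theorem}, upgrade $h$ via Lemma \ref{1 lemma} (applied to the II$_1$ factor $M_r(\mathbb{C})\otimes N$), then amplify by a matrix algebra and conjugate by a unitary to reach scalar form. One minor remark: what makes the amplification step go through is the good-simple structure (each $e_\ell$ has trace $1/s$, hence matches an integer-rank matrix projection after amplifying by $k=s$), not the rationality of the $\lambda_\ell$---that rationality is carried along here but is only really used later, in Lemma \ref{4}.
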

\begin{proof}
By Proposition \ref{1st theorem}, for any $\varepsilon>0$ we can find two normal u.c.p. maps $S_{1}:M\rightarrow M_{n}(\mathbb{C})\otimes N,$ $T_{1}:M_{n}(\mathbb{C})\otimes N\rightarrow M,$
such that $\tau\circ T_{1}(y)=(tr_{n}\otimes\tau)(h_{1}y)$, $\|T_{1}\circ S_{1}(x_{i})-x_{i}\|_{2}<\varepsilon$, where $h_{1},y\in M_{n}(\mathbb{C})\otimes N$, $h_{1}$ is an invertible positive operator and $i=1,\ldots,m$.
By Lemma \ref{1 lemma}, we have a normal u.c.p. map $T_{2}:M_{n}(\mathbb{C})\otimes N\rightarrow M,$ with $\tau\circ T_{2}(y)=(tr_{n}\otimes\tau)(h_{2}y)$,
where $h_{2},y\in M_{n}(\mathbb{C})\otimes N$ and $h_{2}$ is an invertible rational positive ``good'' simple operator.

By the definition of ``good'' simple operators, assume $h_{2}=\sum_{i=1}^{k}\lambda_{i}e_{i},$ where $\{\lambda_{i}\}$ are positive rational numbers and $\{e_{i}\}$ are equivalent mutually orthogonal projections with $\sum_{i=1}^{k}e_{i}=1$. Note that there exists a transform $U$ of $M_{k}(\mathbb{C})\otimes M_{n}(\mathbb{C})\otimes N$ which turns $I_{k}\otimes h_{2}$ into a ``scalar form''. Write $U(z)=vzv^{*}$, where $v,z\in M_{k}(\mathbb{C})\otimes M_{n}(\mathbb{C})\otimes N$, $v$ is some unitary element, and $h:=U(I_{k}\otimes h_{2})$. Then $h$ is an invertible rational positive ``good'' simple operator; furthermore, it is of ``scalar form''.

Define $T=T_{2}\circ (tr_{k}\otimes id_{M_{n}(\mathbb{C})\otimes N})\circ U^{-1}$ and $S=U\circ (id_{k}\otimes id_{M_{n}(\mathbb{C})\otimes N})\circ S_{1}$, where $id_{M_{n}(\mathbb{C})\otimes N}$ is the identity map on $M_{n}(\mathbb{C})\otimes N$, $id_{k}$ is the identity map on $M_{k}(\mathbb{C})$. It is clear that $\|T\circ S(x_{i})-x_{i}\|_{2}<\varepsilon$, $i=1,\ldots,m$.

Let $v=\sum_{1\leq i,j\leq k}e_{ij}\otimes x_{ij}$, where $\{e_{ij}\}_{1\leq i,j\leq k}\subset  M_{k}(\mathbb{C})$ are the matrix units and $x_{ij}\in M_{n}(\mathbb{C})\otimes N$. Then for  $a\in M_{k}(\mathbb{C}), x\in M_{n}(\mathbb{C})\otimes N$, we have
\begin{align*}
\tau\circ T(a\otimes x)&=(tr_{n}\otimes \tau)(h_{2}(tr_{k}\otimes id_{M_{n}(\mathbb{C})\otimes N}) U^{-1}(a\otimes x))\\
&=(tr_{n}\otimes \tau)(h_{2}(tr_{k}\otimes id_{M_{n}(\mathbb{C})\otimes N})(\sum_{i,j,s,t}e_{ji}ae_{st}\otimes x^{*}_{ij}xx_{st}))\\
&=(tr_{n}\otimes \tau)(h_{2}\sum_{i,j,s}tr_{k}(e_{si}a)x^{*}_{ij}xx_{sj})\\
&=\sum_{i,j,s}tr_{k}(e_{si}a)(tr_{n}\otimes \tau)(h_{2}x^{*}_{ij}xx_{sj}),
\end{align*}

\begin{align*}
(tr_{k}\otimes tr_{n}\otimes \tau)(h(a\otimes x))&=(tr_{k}\otimes tr_{n}\otimes \tau)(v(I_{k}\otimes h_{2})v^{*}(a\otimes x))\\
&=(tr_{k}\otimes tr_{n}\otimes \tau)(\sum_{i,j,s}e_{is}\otimes x_{ij}h_{2}x^{*}_{sj}(a\otimes x))\\
&=\sum_{i,j,s}tr_{k}(e_{si}a)(tr_{n}\otimes \tau)(h_{2}x^{*}_{ij}xx_{sj}).
\end{align*}
Thus we have $\tau(T(a\otimes x))=(tr_{k}\otimes tr_{n}\otimes \tau)(h( a\otimes x))$, where $a\in M_{k}(\mathbb{C})$, $x\in M_{n}(\mathbb{C})\otimes N$. Let $m=nk$.
Hence we finish the proof.
\end{proof}

With the help of the above two lemmas, we will mimic \cite[Lemma 3.1, Lemma 3.2]{haagerup} to prove the following two lemmas which also generalise \cite[Lemma 3.1, Lemma 3.2]{haagerup}. We should mention that the proofs are not trivial. We have to overcome some new difficulties since under our assumptions we deal with $M_{m}(\mathbb{C})\otimes N$ where $N$ is a von Neumann algebra, while Haagerup dealt with $M_{m}(\mathbb{C})$.

 The difficulty of Lemma \ref{3} is Claim A, i.e., $S$ maps $M$ into $M_{m}(\mathbb{C})\otimes N$, and it is normal.

\begin{lemma}\label{3}
Let $M$ be a finite von Neumann algebra with a faithful normal trace $\tau$ and $N\subset M$ be a von Neumann subalgebra.
Let $m\in \mathbb{N}$ and $T$ be a normal u.c.p. map from $M_{m}(\mathbb{C})\otimes N$ to $M$
such that $(\tau\circ T)(x)=(tr_{m}\otimes \tau)(hx),$ where $h$ is an invertible positive element in $M_{m}(\mathbb{C})\otimes N$.
Put $\phi(x)=\tau\circ T(x),\mbox{~~for~~} x \in M_{m}(\mathbb{C})\otimes N.$
Then
\begin{enumerate}
\item There is a unique normal u.c.p. map $S$ from $M$ to $M_{m}(\mathbb{C})\otimes N$
such that $$(tr_{m}\otimes\tau)(h^{\frac{1}{2}}S(y)h^{\frac{1}{2}}x^{*})=\tau(yT(x)^{*})$$
for all $y\in$ $M$ and all $x\in M_{m}(\mathbb{C})\otimes N$. Moreover,
$\phi\circ S(y)=\tau(y)~ \mbox{for~} y\in M.$
\item For all $x\in M_{m}(\mathbb{C})\otimes N$, $\|T(x)\|_{2}^{2}\leq (tr_{m}\otimes\tau)
(h^{\frac{1}{2}}x h^{\frac{1}{2}}x^{*}).$
\end{enumerate}
\end{lemma}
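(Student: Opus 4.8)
The plan is to realize $S$ as a ``rotated predual'' of $T$ and to isolate the genuine difficulty, which is \emph{Claim A}: that this a priori $L^{1}$-valued candidate actually takes values in $B:=M_m(\mathbb{C})\otimes N$. Write $\sigma=tr_m\otimes\tau$. Since $T$ is normal, I would first pass to its predual $T_{*}:L^{1}(M)\to L^{1}(B)$, characterized by $\sigma(T_{*}(z)b)=\tau(zT(b))$ for $z\in L^{1}(M)$, $b\in B$. The hypothesis $\tau\circ T=\sigma(h\cdot)$ says exactly $T_{*}(1)=h$, while positivity, normality and unitality of $T$ make $T_{*}$ positive and trace preserving. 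I would then \emph{define} $S(y):=h^{-1/2}T_{*}(y)h^{-1/2}$ for $y\in M$. A direct computation using the defining property of $T_{*}$ and $T(x^{*})=T(x)^{*}$ gives the asserted identity $\sigma(h^{1/2}S(y)h^{1/2}x^{*})=\tau(yT(x)^{*})$ (abbreviate $\langle a,b\rangle_{h}:=\sigma(h^{1/2}ah^{1/2}b^{*})$, so this reads $\langle S(y),x\rangle_{h}=\tau(yT(x)^{*})$), and uniqueness follows at once since $h$ is invertible and $B$ is $\|\cdot\|_{2}$-dense. Unitality $S(1)=h^{-1/2}T_{*}(1)h^{-1/2}=1$ and the relation $\phi\circ S(y)=\sigma(hS(y))=\sigma(T_{*}(y))=\tau(y)$ are then immediate.

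The hard part is Claim A, since $S(y)$ lies only in $L^{1}(B)$ and must be shown bounded. Here I would first record the order lemma: \emph{if $C\in L^{1}(B)_{+}$ satisfies $C\le 1$, then $C\in B$ and $\|C\|\le 1$}. This is proved by testing $1-C\ge 0$ against the spectral projections $p_{\varepsilon}=E_{C}((1+\varepsilon,\infty))$, where $0\le\sigma((1-C)p_{\varepsilon})\le-\varepsilon\,\sigma(p_{\varepsilon})$ together with faithfulness of $\sigma$ forces $\sigma(p_{\varepsilon})=0$ for all $\varepsilon>0$. Granting this, for $0\le y\le 1$ positivity of $T_{*}$ gives $0\le T_{*}(y)\le T_{*}(1)=h$, hence $0\le S(y)=h^{-1/2}T_{*}(y)h^{-1/2}\le 1$, so $S(y)\in B$; linearity extends this to all of $M$, yielding a bounded positive unital map $S:M\to B$. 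Complete positivity then follows by applying the entire construction to $id_{k}\otimes T$, whose weight is $1_{k}\otimes h$ and whose associated map is exactly $id_{k}\otimes S$; its positivity is the $k$-positivity of $S$. Normality follows from the relation: for each $c\in B$, choosing $b$ with $c=h^{1/2}bh^{1/2}$ shows $y\mapsto\sigma(S(y)c)=\tau(yT(b))$ is a normal functional, and since such $c$ are $\|\cdot\|_{1}$-dense and $S$ is bounded, $S$ is normal.

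For part (2) I would deliberately avoid Kadison--Schwarz for $T$ directly: it only yields $\|T(x)\|_{2}^{2}\le\min(\sigma(hxx^{*}),\sigma(hx^{*}x))$, which can be far larger than the claimed bound, so the estimate genuinely needs $S$. The key elementary inequality is, for any $a\in B$,
$$\sigma(h^{1/2}ah^{1/2}a^{*})=\langle h^{1/2}a,\,ah^{1/2}\rangle_{\sigma}\le\sigma(haa^{*})^{1/2}\sigma(ha^{*}a)^{1/2}\le\tfrac12\big(\sigma(haa^{*})+\sigma(ha^{*}a)\big),$$
by Cauchy--Schwarz in $L^{2}(\sigma)$ followed by the arithmetic--geometric mean inequality. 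Applying this with $a=S(z)$, and combining it with Kadison--Schwarz for the u.c.p.\ map $S$ (so $S(z)S(z)^{*}\le S(zz^{*})$, $S(z)^{*}S(z)\le S(z^{*}z)$), positivity of $\phi$, and the crucial identity $\phi\circ S=\tau$, gives $\langle S(z),S(z)\rangle_{h}\le\tfrac12(\tau(zz^{*})+\tau(z^{*}z))=\|z\|_{2}^{2}$. In other words $S$ is a contraction from $L^{2}(M)$ into the completion of $B$ under $\|a\|_{h}^{2}=\langle a,a\rangle_{h}$.

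Finally, putting $z=T(x)$ in the relation and using Cauchy--Schwarz for $\langle\cdot,\cdot\rangle_{h}$ yields
$$\|T(x)\|_{2}^{2}=\tau(T(x)T(x)^{*})=\langle S(T(x)),x\rangle_{h}\le\|S(T(x))\|_{h}\,\|x\|_{h}\le\|T(x)\|_{2}\,\sigma(h^{1/2}xh^{1/2}x^{*})^{1/2},$$
which is exactly statement (2) after dividing by $\|T(x)\|_{2}$. To summarize, the main obstacle throughout is Claim A, the passage from $L^{1}(B)$ to $B$; once that $L^{1}$-boundedness is secured via the order lemma, all the structural properties of $S$ and the estimate (2) fall out of the predual formula $S(y)=h^{-1/2}T_{*}(y)h^{-1/2}$, the trace-preservation $\phi\circ S=\tau$, and the interpolation inequality above.
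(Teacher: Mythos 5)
Your proposal is correct, and the map you construct is the same one the paper builds: your $S(y)=h^{-1/2}T_{*}(y)h^{-1/2}$ is exactly the restriction to $M$ of the adjoint $T_{0}^{*}$ of $T$ viewed as a bounded operator from the Hilbert space completion of $M_{m}(\mathbb{C})\otimes N$ under $s(x_{1},x_{2})=(tr_{m}\otimes\tau)(h^{1/2}x_{1}h^{1/2}x_{2}^{*})$ into $L^{2}(M,\tau)$. The differences lie in how the two key steps are justified. For Claim A the paper fixes $y\in M_{+}$, bounds the normal functional $x\mapsto\tau(yT(x))$ by $\|y\|\,\|h\|\,(tr_{m}\otimes\tau)$, and invokes the Radon--Nikodym theorem for normal functionals (Kadison--Ringrose, Theorem 7.3.6) to produce $z\in(M_{m}(\mathbb{C})\otimes N)_{+}$ with $\tau(yT(x))=(tr_{m}\otimes\tau)(xz)$, whence $S(y)=h^{-1/2}zh^{-1/2}$; your route via $0\le T_{*}(y)\le T_{*}(1)=h$ plus the elementary order lemma that a positive $L^{1}$ element dominated by $1$ lies in the algebra is an equivalent domination argument, and is if anything a bit cleaner since it gives $0\le S(y)\le 1$ directly. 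Complete positivity is handled by amplification in both proofs. For part (2) the paper is shorter: $T\circ S=T_{0}T_{0}^{*}$ is u.c.p. and trace-preserving, hence an $L^{2}$-contraction, so $\|T_{0}\|^{2}=\|T_{0}T_{0}^{*}\|\le1$, which is the claim; you instead prove $\|T_{0}^{*}\|\le1$ directly via Kadison--Schwarz for $S$ together with the interpolation inequality $\sigma(h^{1/2}ah^{1/2}a^{*})\le\tfrac{1}{2}\bigl(\sigma(haa^{*})+\sigma(ha^{*}a)\bigr)$, and then recover the bound on $T$ by Cauchy--Schwarz for $\langle\cdot,\cdot\rangle_{h}$ --- a correct, more hands-on rendering of the same operator-norm identity $\|T_{0}\|=\|T_{0}^{*}\|$. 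Your side remark that Kadison--Schwarz applied to $T$ alone cannot yield (2) is accurate, and is precisely why the paper also routes the estimate through $S$.
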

\begin{proof}
\begin{enumerate}
\item If $S_{1}$, $S_{2}$ satisfy the condition in (1), then for $y\in$ $M$,
$$(tr_{m}\otimes\tau)(h^{\frac{1}{2}}S_{1}(y)h^{\frac{1}{2}}x^{*})=(tr_{m}\otimes\tau)
(h^{\frac{1}{2}}S_{2}(y)h^{\frac{1}{2}}x^{*})$$
for all $x \in M_{m}(\mathbb{C})\otimes N$. This implies that
$h^{\frac{1}{2}}S_{1}(y)h^{\frac{1}{2}}=h^{\frac{1}{2}}S_{2}(y)h^{\frac{1}{2}}$ and consequently
$S_{1}(y)=S_{2}(y)$ since $h$ is invertible.

Let $s$ be the inner product on $M_{m}(\mathbb{C})\otimes N$ defined by for $x_{1}, x_{2} \in M_{m}(\mathbb{C})\otimes N,$
$s(x_{1},x_{2})=(tr_{m}\otimes\tau)(h^{\frac{1}{2}}x_{1} h^{\frac{1}{2}}x^{*}_{2}).$

Note that $s$ is positive definite because
$$s(x_{1},x_{2})=(tr_{m}\otimes\tau)((h^{\frac{1}{4}}x_{1} h^{\frac{1}{4}})(h^{\frac{1}{4}}x_{2} h^{\frac{1}{4}})^{*}).$$

For $x\in M_{m}(\mathbb{C})\otimes N$, we have
$$\|T(x)\|_{2}^{2}=\tau(T^{*}(x)T(x))\leq\tau(T(x^{*}x))=(tr_{m}\otimes\tau)(hx^{*}x).$$
Moreover,
\begin{align*}
(tr_{m}\otimes\tau)(hx^{*}x)&=(tr_{m}\otimes\tau)(h^{\frac{1}{2}}x^{*}h^{\frac{1}{4}}h^{-\frac{1}{2}}h^{\frac{1}{4}}xh^{\frac{1}{2}})\\
&\leq\|h^{-\frac{1}{2}}\|(tr_{m}\otimes\tau)(h^{\frac{1}{2}}
x^{*}h^{\frac{1}{4}}h^{\frac{1}{4}}xh^{\frac{1}{2}})\\
&=\|h^{-\frac{1}{2}}\|(tr_{m}\otimes\tau)(h^{\frac{1}{2}}
h^{\frac{1}{4}}x^{*}h^{\frac{1}{4}}h^{\frac{1}{4}}xh^{\frac{1}{4}})\\
&\leq\|h^{-\frac{1}{2}}\|\|h^{\frac{1}{2}}\|(tr_{m}\otimes\tau)(
h^{\frac{1}{4}}x^{*}h^{\frac{1}{4}}h^{\frac{1}{4}}xh^{\frac{1}{4}})\\
&=\|h^{-\frac{1}{2}}\|\|h^{\frac{1}{2}}\|\|x\|^{2}_{s}.\\
\end{align*}
Denote by $(M_{m}(\mathbb{C})\otimes N,s)$ the completion of $M_{m}(\mathbb{C})\otimes N$ with respect to the norm induced by the inner product $s$.
Thus there exists a bounded linear map $T_{0}$ from the Hilbert space
$(M_{m}(\mathbb{C})\otimes N,s)$ to the Hilbert space $L^{2}(M,\tau)$ with the restriction to be $T$ on $M_{m}(\mathbb{C})\otimes N$.

Let
$T_{0}^{*}:L^{2}(M,\tau)\rightarrow (M_{m}(\mathbb{C})\otimes N,s)$ be the adjoint operator
and let $S$ be the restriction of $T_{0}^{*}$ to $M$.

Claim A: $S$ is a normal map which maps $M$ into $M_{m}(\mathbb{C})\otimes N$.

\emph{Proof of Claim A}.
For $x \in (M_{m}(\mathbb{C})\otimes N)_{+},~ y \in M_{+}$,
\begin{align*}
\tau(yT(x))&=\tau(T(x)^{\frac{1}{2}}yT(x)^{\frac{1}{2}})\\
&\leq\|y\|\tau\circ T(x)\\
&=\|y\|(tr_{m}\otimes\tau)(hx)\\
&=\|y\|(tr_{m}\otimes\tau)(x^{\frac{1}{2}}hx^{\frac{1}{2}})\\
&\leq\|y\|\|h\|(tr_{m}\otimes\tau)(x).
\end{align*}

Note that for any fixed $y$ in $M_{+}$, $\tau(yT(x))$ and $(tr_{m}\otimes\tau)(x)$ are normal positive linear functionals on $M_{m}(\mathbb{C})\otimes N$.
By \cite[Theorem 7.3.6]{Kadison book}, there exists a positive element $z$ in $M_{m}(\mathbb{C})\otimes N$ such that
$\tau(yT(x))=(tr_{m}\otimes\tau)(xz)$.
Besides, since $h$ is invertible, we have
$$(tr_{m}\otimes\tau)(xz)=(tr_{m}\otimes\tau)(h^{\frac{1}{2}}h^{-\frac{1}{2}}zh^{-\frac{1}{2}}h^{\frac{1}{2}}x)=s(h^{-\frac{1}{2}}zh^{-\frac{1}{2}},x^{*}).$$
For $x \in M_{m}(\mathbb{C})\otimes N, y \in M$,
$$s(S(y),x)=s(T_{0}^{*}(y),x)=(y,T_{0}(x))_{\tau}=\tau(yT(x^{*})).$$
Then we can obtain that for $x \in (M_{m}(\mathbb{C})\otimes N)_{+}, y \in M_{+}$,
$$s(S(y),x)=\tau(yT(x))=s(h^{-\frac{1}{2}}zh^{-\frac{1}{2}},x),$$
which implies $S(y)=h^{-\frac{1}{2}}zh^{-\frac{1}{2}}$ and hence $S$ is normal.
Since $h$ and $z$ are both in $M_{m}(\mathbb{C})\otimes N$, $S$ maps all the elements of $M$ into $M_{m}(\mathbb{C})\otimes N$. This ends the proof of Claim A.

It is clear that $$(tr_{m}\otimes\tau)(h^{\frac{1}{2}}S(1) h^{\frac{1}{2}}x^{*})=s(S(1),x)=\tau(T(x)^{*})=(tr_{m}\otimes\tau)(hx^{*}),$$
hence $S(1)=1$ since $h$ is invertible. For $y\in N$, we have $$\phi\circ S(y)=\tau\circ T\circ S(y)=
(tr_{m}\otimes\tau)(hS(y))=s(S(y),1)=\tau(y).$$

To prove that $S$ is completely positive, we will need the fact that an operator $x$ in a finite von Neumann algebra $B$ is positive
if and only if $\tau_{B}(xy)\geq 0$ for any $y\in B_{+}$.
Here, $\tau_{B}$ is a faithful normal tracial state on $B$.

Let $n\in$ $\mathbb{N}$, $(e_{ij})_{i,j=1,\ldots,n}$ be the matrix units in $M_{n}(\mathbb{C})$. Let
$I_{n}$ be the identity in $M_{n}(\mathbb{C})$. Put $S^{(n)}=I_{n}\otimes S$, $T^{(n)}=I_{n}\otimes T$.
We shall prove that $S^{(n)}$ is a positive map for all $n\in$ $\mathbb{N}$. Let
$a=\sum_{i,j=1}^{n}e_{ij}\otimes a_{ij}\in M_{n}(\mathbb{C})\otimes M,$ and
$b=\sum_{i,j=1}^{n}e_{ij}\otimes b_{ij}\in M_{n}(\mathbb{C})\otimes(M_{m}(\mathbb{C})\otimes N).$

Then
\begin{align*}
&(tr_{n}\otimes(tr_{m}\otimes\tau))((I_{n}\otimes h^{\frac{1}{2}})S^{(n)}(a)(I_{n}\otimes h^{\frac{1}{2}})b^{*})\\
=&(tr_{n}\otimes(tr_{m}\otimes\tau))((\sum_{i,j=1}^{n}e_{ij}\otimes h^{\frac{1}{2}}S(a_{ij})h^{\frac{1}{2}})(\sum_{s,t=1}^{n}e_{ts}\otimes b_{st}^{*} ))\\
=&\frac{1}{n}(tr_{m}\otimes\tau)(\sum_{i,j=1}^{n}h^{\frac{1}{2}}S(a_{ij})h^{\frac{1}{2}}b_{ij}^{*})\\
=&\frac{1}{n}\sum_{i,j=1}^{n}s(S(a_{ij}),b_{ij})\\
=&\frac{1}{n}\sum_{i,j=1}^{n}\tau(a_{ij}T(b_{ij}^{*}))\\
=&(tr_{n}\otimes\tau)(aT^{(n)}(b)^{*}).\\
\end{align*}
For all $a\in( M_{n}(\mathbb{C})\otimes M)_{+}$ and $b\in (M_{n}(\mathbb{C})\otimes(M_{m}(\mathbb{C})\otimes N))_{+}$, we have $(I_{n}\otimes h^{\frac{1}{2}} )S^{(n)}(a)(I_{n}\otimes h^{\frac{1}{2}})\in  (M_{n}(\mathbb{C})
\otimes(M_{m}(\mathbb{C})\otimes N))_{+} $ since $T^{(n)}$ is positive.
Hence $S^{(n)}$ is a positive map.
\item
The composed map $T\circ S$ is a normal u.c.p. map from $M$ to $M$ and $\tau\circ(T\circ S)=\phi\circ S=\tau$.
Then $\|T\circ S(x)\|_{2}\leq\|x\|_{2}$ using the Schwartz inequality for c.p. maps. Hence $\|T_{0}\circ T_{0}^{*}\|\leq 1$,
where $T_{0}$ is the map $T$ considered as a linear map from the Hilbert space $(M_{m}(\mathbb{C})\otimes N,s)$ to $L^{2}(N,\tau)$.
Thus $\|T_{0}\|^{2}=\|T_{0}\circ T_{0}^{*}\|\leq1$, i.e. $\|T(x)\|_{2}^{2}\leq s(x,x)=(tr_{m}\otimes\tau)
(h^{\frac{1}{2}}x h^{\frac{1}{2}}x^{*})$, $x \in M_{m}(\mathbb{C})\otimes N$.
\end{enumerate}
 \end{proof}

To prove Lemma \ref{4}, we first use the same method as Haagerup did to prove Claim A. The difficulty in our proof is Claim B. In Haagerup's proof, he first constructed a u.c.p. map $T:M_{m}(\mathbb{C})\rightarrow M_{q}(\mathbb{C})$ which is claim A in our proof, then he used \cite[Lemma 3.1]{haagerup} to get a u.c.p. map $S: M_{q}(\mathbb{C})\rightarrow M_{m}(\mathbb{C})$. Since this $S$ is defined abstractly, to estimate $S\circ T(e_{ij})$, he used the fact that $x\in M_m(\mathbb{C})$ is determined once we know $tr_{m}(xe_{ij})$ for all the matrix units $\{e_{ij}\}_{1\leq i,j\leq m}$ in $M_{m}(\mathbb{C})$.
However in our situation, this method does not work. Instead, to prove claim B, we directly construct a normal u.c.p. map $S:M_{q}(\mathbb{C})\otimes N\rightarrow M_{m}(\mathbb{C})\otimes N$ such that for $x_{ij}\in N$, $S\circ T(e_{ij}\otimes x_{ij})$ can be estimated.

\begin{lemma}\label{4}
Let $M$ be a finite von Neumann algebra with a faithful normal trace $\tau$ and let $N\subset M$ be a von Neumann subalgebra.
Let $\phi$ be a normal state on $M_{m}(\mathbb{C})\otimes N$ of the form $$\phi(x)=(tr_{m}\otimes\tau)(hx),$$
where $h$ is an invertible rational positive ``good'' simple operator, and it is of ``scalar form'' in $M_{m}(\mathbb{C})\otimes N$. Then there exists a $q\in \mathbb{N}$, and two normal u.c.p. maps $T:M_{m}(\mathbb{C})\otimes N\rightarrow M_{q}(\mathbb{C})\otimes N$,
$S:M_{q}(\mathbb{C})\otimes N\rightarrow M_{m}(\mathbb{C})\otimes N$
such that
\begin{enumerate}
\item $\phi\circ S=tr_{q}\otimes\tau,~(tr_{q}\otimes\tau)\circ T=\phi$,
\item $\|S\circ T(x)-x\|_{\phi}^{\sharp}\leq\|h^{\frac{1}{2}}x-xh^{\frac{1}{2}}\|_{2},~x\in M_{m}(\mathbb{C})\otimes N$.
\end{enumerate}
\end{lemma}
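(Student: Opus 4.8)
The plan is to follow the blueprint of Haagerup's \cite[Lemma 3.2]{haagerup}, but carried out inside $M_m(\mathbb{C})\otimes N$ rather than $M_m(\mathbb{C})$. The hypothesis that $h$ is of ``scalar form'', $h=\sum_{i=1}^m f_{ii}\otimes\lambda_{i,i}1_N$ with rational positive $\lambda_{i,i}$, is what makes the construction possible: after clearing denominators we may write $\lambda_{i,i}=p_i/q$ for a common denominator $q$ and positive integers $p_i$ with $\sum_i p_i = q\cdot m$ coming from the normalization $\phi(1)=1$, i.e.\ $(tr_m\otimes\tau)(h)=1$. The first step is to use these integers to define the amplification dimension $q$ and to build an explicit isometric-type embedding of $M_m(\mathbb{C})\otimes N$ into $M_q(\mathbb{C})\otimes N$ that ``spreads out'' each diagonal block $f_{ii}$ into $p_i$ copies, so that the weighted trace $\phi$ on the small algebra becomes the \emph{unweighted} trace $tr_q\otimes\tau$ on the large one.

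\textbf{Constructing $T$.} I would define $T:M_m(\mathbb{C})\otimes N\to M_q(\mathbb{C})\otimes N$ by a block-diagonal rescaling: write $M_q(\mathbb{C})$ with its matrix units $\{g_{kl}\}$ grouped into $m$ consecutive blocks of sizes $p_1,\ldots,p_m$, and send $f_{ij}\otimes x$ to a suitably normalized sum of matrix units connecting the $i$-th and $j$-th blocks, tensored with $x\in N$. The normalization constants are dictated precisely by the $p_i$ (equivalently the $\lambda_{i,i}$) and by the requirement $(tr_q\otimes\tau)\circ T=\phi$; this is a direct computation. Checking that $T$ is normal, unital, and completely positive is then routine, since $T$ is essentially an amplification followed by a conjugation by a fixed partial-isometry-type element of $M_{q\times m}(\mathbb{C})\otimes 1_N$, and these operations preserve complete positivity.

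\textbf{Constructing $S$ and proving the estimate.} For $S$, the natural route is to apply Lemma \ref{3} to the map $T$ just constructed: Lemma \ref{3}(1) produces a unique normal u.c.p. map $S:M_q(\mathbb{C})\otimes N\to M_m(\mathbb{C})\otimes N$ adjoint to $T$ with respect to the inner products weighted by $h$ and by $tr_q\otimes\tau$, and it delivers $\phi\circ S = tr_q\otimes\tau$ for free, completing part (1). Part (2) is the heart of the lemma. The idea is to compute $\|S\circ T(x)-x\|_\phi^\sharp$ by expanding the inner product $s(\cdot,\cdot)$ from Lemma \ref{3} and using the adjoint relation $s(S(y),x)=\tau(yT(x^*))$ together with Lemma \ref{3}(2), $\|T(x)\|_2^2\le (tr_m\otimes\tau)(h^{1/2}xh^{1/2}x^*)$, in the amplified setting. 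As explained in the remark preceding the lemma, the crucial novelty over Haagerup is \emph{Claim B}: because $N$ is genuinely noncommutative, I cannot recover an element of $M_m(\mathbb{C})\otimes N$ merely from its pairings against the scalar matrix units $\{f_{ij}\}$, so rather than defining $S$ abstractly I must verify directly that my explicit $T$ and the resulting $S$ satisfy $\|S\circ T(f_{ij}\otimes x_{ij})-f_{ij}\otimes x_{ij}\|_\phi^\sharp$ is controlled by $\|h^{1/2}(f_{ij}\otimes x_{ij})-(f_{ij}\otimes x_{ij})h^{1/2}\|_2$ on each elementary tensor, and then assemble the general estimate by the triangle inequality and the symmetric nature of the $\sharp$-norm.

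\textbf{Main obstacle.} I expect the decisive difficulty to be exactly this Claim B estimate. The commutator $h^{1/2}x - x h^{1/2}$ measures the failure of $x$ to respect the grading induced by $h$, and one must show that the amplification $T$ followed by its $h$-adjoint $S$ distorts $x$ by no more than this commutator in the $\sharp$-norm. In Haagerup's scalar case this follows from a finite computation over matrix units; here the $N$-entries $x_{ij}$ ride along passively through $T$ and $S$ only because $h$ is of scalar form (its $N$-part is $1_N$), so the commutator with $h^{1/2}$ sees only the $M_m(\mathbb{C})$-indices. The plan is therefore to exploit scalar form to factor the whole estimate through the $M_m(\mathbb{C})$-level computation, treating the $N$-valued entries as coefficients, and to reduce to Haagerup's inequality block by block. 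Verifying that the explicit $S$ obtained from Lemma \ref{3} coincides on elementary tensors with the rescaling one would write down by hand—so that the block-diagonal computation is legitimate—is the step requiring the most care.
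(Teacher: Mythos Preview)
Your overall strategy matches the paper's proof closely: construct $T$ by sending $e_{ij}\otimes x$ to $f_{ij}\otimes x$ where $f_{ij}$ is the block partial isometry connecting the $i$-th and $j$-th blocks of sizes $p_i,p_j$; obtain $S$ as the $h$-adjoint of $T$ via Lemma~\ref{3}; and exploit the scalar form of $h$ so that the $N$-entries are passive throughout. The paper does exactly this, with the additional step of writing down an explicit candidate $S'$ and verifying, via the uniqueness clause of Lemma~\ref{3}, that $S'=S$; this yields the clean formula $S\circ T(e_{ij}\otimes x_{ij})=\frac{\min\{p_i,p_j\}}{\sqrt{p_ip_j}}\,e_{ij}\otimes x_{ij}$, from which part~(2) follows by a direct $\ell^2$ computation.

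There is one genuine gap in your plan: you propose to establish the inequality on each elementary tensor $e_{ij}\otimes x_{ij}$ and then ``assemble the general estimate by the triangle inequality.'' This does not work. The triangle inequality in $\|\cdot\|_\phi^\sharp$ would bound $\|S\circ T(x)-x\|_\phi^\sharp$ by an $\ell^1$-sum over $(i,j)$, whereas the right-hand side $\|h^{1/2}x-xh^{1/2}\|_2$ is an $\ell^2$-sum, so you would lose the inequality. What you must use instead is that, because $h$ is diagonal of scalar form, the off-diagonal pieces $e_{ij}\otimes x_{ij}$ are \emph{orthogonal} for the inner products underlying both $\|\cdot\|_\phi^\sharp$ and $\|\cdot\|_2$. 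Concretely, the paper computes
\[
(\|S\circ T(x)-x\|_\phi^\sharp)^2=\frac{1}{2q}\sum_{i,j}(p_i+p_j)\Bigl(1-\tfrac{\min\{p_i,p_j\}}{\sqrt{p_ip_j}}\Bigr)^2\|x_{ij}\|_2^2
\]
and
\[
\|h^{1/2}x-xh^{1/2}\|_2^2=\frac{1}{q}\sum_{i,j}(p_i^{1/2}-p_j^{1/2})^2\|x_{ij}\|_2^2,
\]
and then compares the two sums term by term via the elementary inequality $(1-\sqrt{p_i/p_j})^2\le \frac{2}{p_i+p_j}(p_i^{1/2}-p_j^{1/2})^2$. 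This is the ``block by block'' reduction you allude to in your final paragraph, but it is a Pythagorean identity, not a triangle inequality. Also, Lemma~\ref{3}(2) plays no role in the proof of Lemma~\ref{4}; it is used only later in the proof of Theorem~\ref{diffuse}.
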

\begin{proof}
Claim A: there exists a normal u.c.p. map $T:M_{m}(\mathbb{C})\otimes N\rightarrow M_{q}(\mathbb{C})\otimes N$ such that $(tr_{q}\otimes\tau)\circ T=\phi$.

\emph{Proof of Claim A}.
Assume $h$ is of the diagonal form with diagonal elements $\lambda_{1},\ldots,\lambda_{m}$, where $\lambda'_{i}s$ are strictly
positive rational numbers. Then we can choose positive integers $p_{1},\ldots,p_{m}$ and $q$ such that
$\frac{\lambda_{i}}{m}=\frac{p_{i}}{q},~~i=1,\ldots,m.$
Since $(tr_{m}\otimes\tau)(h)=1$, we have $\sum_{i=1}^{m}p_{i}=q$.

A $q\times q$-matrix $y$ can be represented by a block matrix
$y=(y_{ij})_{i,j=1,\ldots,m},$ where each $y_{ij}$ is a $p_{i}\times p_{j}$-matrix. Let $F_{ij}$ denote the $p_{i}\times p_{j}$-matrix
given by
\begin{equation*}
(F_{ij})_{k,l}=\begin{cases}
1,&  \text{if $k=l$,} \\
0,&   \text{if $k\neq l$.}\\
\end{cases}
\end{equation*}
and let $f_{ij}$ denote the $q\times q$-matrix with block matrix
\begin{equation*}
(f_{ij})_{i'j'}=\begin{cases}
F_{ij}, &\text{if $(i', j')=(i, j)$,}\\
0,& \text{otherwise.}\\
\end{cases}
\end{equation*}
Note that the number $1$ occurs  min$\{p_{i},p_{j}\}$ times in $F_{ij}$ and $f_{ij}$. Let $(e_{ij})_{i,j=1,\ldots,m}$ be the matrix units in
$M_{m}(\mathbb{C})$ and define a linear map $T$ from $M_{m}(\mathbb{C})\otimes N$ to $M_{q}(\mathbb{C})\otimes N$ by
$T(\sum_{i,j=1}^{m}e_{ij}\otimes x_{ij})=\sum_{i,j=1}^{m}f_{ij}\otimes x_{ij},~~x_{ij}\in N.$
Then $T$ is unital. Moreover, for $i\neq j$, we have
\begin{align*}
(tr_{q}\otimes\tau )(T(e_{ij}\otimes x_{ij}))=( tr_{q}\otimes\tau)( f_{ij}\otimes x_{ij})=( tr_{m}\otimes\tau)(h(e_{ij}\otimes x_{ij}))=0,\\
(tr_{q}\otimes\tau)(T(e_{ii}\otimes x_{ii}))=tr_{q}(f_{ii})\tau(x_{ii})= \frac{\lambda_{i}}{m}\tau(x_{ii})=
( tr_{m}\otimes\tau)(h( e_{ii}\otimes x_{ii})).
\end{align*}
Hence, $( tr_{q}\otimes\tau)\circ T(x)=(tr_{m}\otimes\tau )(hx)=\phi(x),~~x\in M_{m}(\mathbb{C})\otimes N.$
To see that $T$ is completely positive, put $p=max\{p_{1},\ldots,p_{m}\}$ and let $\tilde{f}_{ij}$ be the element in $M_{mp}(\mathbb{C})$
given by the $m\times m$-block matrix
\begin{equation*}
(\tilde{f}_{ij})_{i'j'}=\begin{cases}
I_p, & \text{if $(i', j')=(i, j)$,} \\
0, &\text{otherwise.}\\
\end{cases}
\end{equation*}
Here $I_{p}$ is the $p\times p$-unit matrix. The map $\tilde{T}$ from $M_{m}(\mathbb{C})\otimes N$ to $M_{mp}(\mathbb{C})\otimes N$ by
$\tilde{T}(\sum_{i,j=1}^{m}e_{ij}\otimes x_{ij})=\sum_{i,j=1}^{m}\tilde{f}_{ij}\otimes x_{ij},~~x_{ij}\in N,$
is a $*$-representation and therefore completely positive.
It is not difficult to see that there exists a projection $e$ in $M_{mp}(\mathbb{C})\otimes N$ such that
$e(M_{mp}(\mathbb{C})\otimes N)e=M_{q}(\mathbb{C})\otimes N$ and
$T(x)=e\tilde{T}(x)e,~~x\in M_{m}(\mathbb{C})\otimes N.$
Hence $T$ is normal and completely positive.
This ends the proof of Claim A.

Claim B: there is a normal u.c.p. map $S:M_{q}(\mathbb{C})\otimes N\rightarrow M_{m}(\mathbb{C})\otimes N$ such that $\phi\circ S=tr_{q}\otimes\tau$ and $S\circ T(e_{ij}\otimes x_{ij})=\frac{min\{p_{i},p_{j}\}}{\sqrt{p_{i}p_{j}}}e_{ij}\otimes x_{ij}.$

\emph{Proof of Claim B}.
For any $s,t\in\mathbb{N}$, define a linear map $D$ from  $M_{s\times t}(\mathbb{C})\otimes N$ to $N$ by
$$D(\sum_{1\leq i\leq s, 1\leq j\leq t}l_{ij}\otimes h_{ij})=\sum_{i=1}^{min\{s,t\}}h_{ii},$$
where $(l_{ij})_{1\leq i\leq s, 1\leq j\leq t}$ is the matrix units in $M_{s\times t}(\mathbb{C})$ and $h_{ij}$ is in $N$
for any $1\leq i\leq s, 1\leq j\leq t$.
Let $(k_{st})_{s,t=1,\ldots,q}$ be the matrix units in $M_{q}(\mathbb{C})$. For $x=\sum_{i,j=1}^{m}e_{ij}\otimes x_{ij}\in
 M_{m}(\mathbb{C})\otimes N
,~~y=\sum_{i,j=1}^{q} k_{ij}\otimes y_{ij}\in M_{q}(\mathbb{C})\otimes N,$ define a linear map $S'$ from
 $M_{q}(\mathbb{C})\otimes N$ to $M_{m}(\mathbb{C})\otimes N$ by
 $$S'(y)=\sum_{i,j=1}^{m} e_{ij}\otimes\frac{1}{\sqrt{p_{i}p_{j}}} D(f_{ii}yf_{jj}).$$
 For $1\leq i,j \leq m$, put $a_{ij}=\frac{1}{\sqrt{p_{i}p_{j}}} D(f_{ii}yf_{jj})$
 and $p_{0}=0$, then
 $$a_{ij}=\frac{1}{\sqrt{p_{i}p_{j}}}\sum_{k=1}^{min\{p_{i},p_{j}\}}y_{p_{1}+p_{2}+\ldots+p_{i-1}+k,~p_{1}+p_{2}+\ldots+p_{j-1}+k}.$$

Note that
\begin{align*}
(tr_{m}\otimes\tau)(h^{\frac{1}{2}}S'(y) h^{\frac{1}{2}}x^{*})&=(tr_{m}\otimes\tau)((\sum_{i,j=1}^{m} e_{ij}\otimes\sqrt{\lambda_{i}\lambda_{j}}a_{ij})(\sum_{k,l=1}^{m}e_{lk}\otimes x_{kl}^{*}))\\
&=\sum_{i,j=1}^{m}\frac{
 \tau(\sqrt{\lambda_{i}\lambda_{j}}a_{ij}x^{*}_{ij})}{m}\\
 &=\sum_{i,j=1}^{m}\frac{
 \tau(\sqrt{p_{i}p_{j}}a_{ij}x^{*}_{ij})}{q}\\
 &=\sum_{i,j=1}^{m}\sum_{k=1}^{min\{p_{i},p_{j}\}}\frac{\tau(y_{p_{1}+p_{2}+\ldots+p_{i-1}+k,~p_{1}+p_{2}+
 \ldots+p_{j-1}+k}x^{*}_{ij})}{q}.
\end{align*}
Note that $f_{ij}=\sum_{k=1}^{min\{p_{i},p_{j}\}}k_{p_{1}+p_{2}+\ldots+p_{i-1}+k,~p_{1}+p_{2}+\ldots+p_{j-1}+k},$
then we have
\begin{align*}
(tr_{q}\otimes\tau)(yT(x)^{*})&=(tr_{q}\otimes\tau)((\sum_{s,t=1}^{q} k_{st}\otimes y_{st})(\sum_{i,j=1}^{m}f_{ji}\otimes x_{ij}^{*}))\\
&=\sum_{i,j=1}^{m}\sum_{s,t=1}^{q}(tr_{q}\otimes\tau)(k_{st}f_{ji}\otimes y_{st}x_{ij}^{*})\\
&=\sum_{i,j=1}^{m}\sum_{k=1}^{min\{p_{i},p_{j}\}}\sum_{s,t=1}^{q}tr_{q}(k_{st}k_{p_{1}+p_{2}+\ldots+p_{j-1}+k,~p_{1}+p_{2}+\ldots+p_{i-1}+k})\\
&\circ\tau (y_{st}x_{ij}^{*})\\
&=\sum_{i,j=1}^{m}\sum_{k=1}^{min\{p_{i},p_{j}\}}\frac{\tau(y_{p_{1}+p_{2}+\ldots+p_{i-1}+k,~p_{1}+p_{2}+
 \ldots+p_{j-1}+k} x^{*}_{ij})}{q}.
\end{align*}

By Lemma \ref{3}, there exists a unique normal u.c.p. map $S$ from $M_{q}(\mathbb{C})\otimes N$ to $M_{m}(\mathbb{C})\otimes N$ such that
 for $x\in M_{m}(\mathbb{C})\otimes N,~y\in M_{q}(\mathbb{C})\otimes N$, $(tr_{m}\otimes\tau)(h^{\frac{1}{2}}S(y) h^{\frac{1}{2}}x^{*})
 =(tr_{q}\otimes\tau)(yT(x)^{*}),$
so it follows that $S=S'$ and $\phi\circ S=tr_{q}\otimes\tau$.

Since $T(e_{ij}\otimes x_{ij})=f_{ij}\otimes x_{ij}$, by the definition of $S'=S$
we have
$$S\circ T(e_{ij}\otimes x_{ij})=\frac{min\{p_{i},p_{j}\}}{\sqrt{p_{i}p_{j}}}e_{ij}\otimes x_{ij}.$$
This ends the proof of Claim B.

Now we check that $\|S\circ T(x)-x\|_{\phi}^{\sharp}\leq\|h^{\frac{1}{2}}x-xh^{\frac{1}{2}}\|_{2},~x\in M_{m}(\mathbb{C})\otimes N$.

For any $x=\sum_{i,j=1}^{m}x_{ij}\otimes e_{ij}\in M_{m}(\mathbb{C})\otimes N$,
\begin{align*}
(\|x\|_{\phi}^{\sharp})^{2}&=\phi(\frac{xx^{*}+x^{*}x}{2})\\
&=(tr_{m}\otimes\tau)(\frac{h(xx^{*}+x^{*}x)}{2})\\
&=\frac{1}{2m}\sum_{i,j=1}^{m}(\lambda_{i}+\lambda_{j})\|x_{ij}\|_{2}^{2}\\
&=\frac{1}{2q}\sum_{i,j=1}^{m}(p_{i}+p_{j})\|x_{ij}\|_{2}^{2}.\\
\end{align*}

Hence $(\|S\circ T(x)-x\|_{\phi}^{\sharp})^{2}=\frac{1}{2q}\sum_{i,j=1}^{m}(p_{i}+p_{j})(1-\frac{min\{p_{i},
p_{j}\}}{\sqrt{p_{i}p_{j}}})^{2}\|x_{ij}\|_{2}^{2}.$

If $p_{i}\leq p_{j},$
\begin{align*}
(1-\frac{min\{p_{i},p_{j}\}}{\sqrt{p_{i}p_{j}}})^{2}&=(1-(\frac{p_{i}}{p_{j}})^{\frac{1}{2}})^{2}\\
&=\frac{1}{p_{j}}(p^{\frac{1}{2}}_{i}-p^{\frac{1}{2}}_{j})^{2}\\
&\leq\frac{2}{p_{i}+p_{j}}(p^{\frac{1}{2}}_{i}-p^{\frac{1}{2}}_{j})^{2}.\\
\end{align*}

By symmetry, the formula also holds for $p_{j}\leq p_{i}$. Hence
$$(\|S\circ T(x)-x\|_{\phi}^{\sharp})^{2}\leq\frac{1}{q}\sum_{i,j=1}^{m}(p^{\frac{1}{2}}_{i}-p^{\frac{1}{2}}_{j})^{2}\|x_{ij}\|_{2}^{2}.$$

On the other hand, the $(i,j)$-th element of the matrix $h^{\frac{1}{2}}x-xh^{\frac{1}{2}}$ is $(\lambda_{i}^{\frac{1}{2}}-\lambda_{j}
^{\frac{1}{2}})x_{ij}$. Thus
\begin{align*}
\|h^{\frac{1}{2}}x-xh^{\frac{1}{2}}\|_{2}^{2}&=\frac{1}{m}\sum_{i,j=1}^{m}(\lambda_{i}^{\frac{1}{2}}-\lambda_{j}^{\frac{1}{2}})^{2}\|x_{ij}\|_{2}^{2}\\
&=\frac{1}{q}\sum_{i,j=1}^{m}(p_{i}^{\frac{1}{2}}-p_{j}^{\frac{1}{2}})^{2}\|x_{ij}\|_{2}^{2}.\\
\end{align*}

Then we finish the proof.
\end{proof}

With the help of the above four lemmas, we now proceed to prove Theorem \ref{diffuse}. Actually, the proof of Theorem \ref{diffuse} is adapted from \cite[Lemma 3.4, Proposition 3.5]{haagerup}. For the reader's convenience, we include the proof below.
\begin{proof}[Proof of Theorem \ref{diffuse}]
It is sufficient to consider unitary operators $u_{1},\ldots,u_{n}\in M$.

Claim A: there exists a $q\in \mathbb{N}$, a normal u.c.p. map $T$ from $M_{q}(\mathbb{C})\otimes N$ to $M$, and $n$ operators
 $y_{1},\ldots,y_{n}\in M_{q}(\mathbb{C})\otimes N$, such that $\|y_{k}\|\leq1$, $\tau\circ T=tr_{q}\otimes\tau$
and $\|T(y_{k})-u_{k}\|_{2}<\varepsilon,~k=1,\ldots,n.$

\emph{Proof of Claim A}.
Let $\varepsilon>0$. By Lemma \ref{2}, there exists an $m\in \mathbb{N}$, and normal u.c.p. maps
$S_{1}:M\rightarrow M_{m}(\mathbb{C})\otimes N$ and $T_{1}:M_{m}(\mathbb{C})\otimes N\rightarrow M$ such that
$\|T_{1}\circ S_{1}(u_{k})-u_{k}\|_{2}<\varepsilon,~k=1,\ldots,n$, and $\tau\circ T_{1}(x)=(tr_{m}\otimes\tau)(hx)$, where $h$ is an invertible rational positive ``good'' simple operator, which is of scalar form.
Put $x_{k}=S_{1}(u_{k}), ~k=1,\ldots,n$. Note that $\|x_{k}\|\leq1$ and $$\|T_{1}(x_{k})-u_{k}\|_{2}<\varepsilon,~k=1,\ldots,n.$$

Put $\phi(x)=(tr_{m}\otimes\tau)(hx),~x\in M_{m}(\mathbb{C})\otimes N.$
By Lemma \ref{4}, there exists a $q\in \mathbb{N}$, normal u.c.p. maps $T_{2}:M_{m}(\mathbb{C})\otimes N\rightarrow M_{q}(\mathbb{C})\otimes N$ and
$S_{2}:M_{q}(\mathbb{C})\otimes N\rightarrow M_{m}(\mathbb{C})\otimes N$ such that
$\phi\circ S_{2}=tr_{q}\otimes\tau,~(tr_{q}\otimes\tau)\circ T_{2}=\phi$,
and $\|S_{2}\circ T_{2}(x)-x\|_{\phi}^{\sharp}\leq\|h^{\frac{1}{2}}x-xh^{\frac{1}{2}}\|_{2},~x\in M_{m}(\mathbb{C})\otimes N.$

For $k=1,\ldots,n$,
\begin{align*}
\|h^{\frac{1}{2}}x_{k}-x_{k}h^{\frac{1}{2}}\|_{2}^{2}&=(tr_{m}\otimes\tau)(hx_{k}x_{k}^{*}+hx_{k}^{*}x_{k}-2h^{\frac{1}{2}}x_{k}h^{\frac{1}{2}}x_{k}^{*})\\
&=\phi(x_{k}x_{k}^{*})+\phi(x_{k}^{*}x_{k})-2(tr_{m}\otimes\tau)(h^{\frac{1}{2}}x_{k}h^{\frac{1}{2}}x_{k}^{*})\\
&\leq2-2(tr_{m}\otimes\tau)(h^{\frac{1}{2}}x_{k}h^{\frac{1}{2}}x_{k}^{*}).
\end{align*}

By Lemma \ref{3} (2),
\begin{align*}
(tr_{m}\otimes\tau)(h^{\frac{1}{2}}x_{k}h^{\frac{1}{2}}x_{k}^{*})&\geq \|T_{1}(x_{k})\|_{2}^{2}\\
&\geq(\|u_{k}\|_{2}-\|u_{k}-T_{1}(x_{k})\|_{2})^{2}\\
&>(1-\varepsilon)^{2}\\
&> 1-2\varepsilon.\\
\end{align*}
Then we have $\|S_{2}\circ T_{2}(x_{k})-x_{k}\|_{\phi}^{\sharp}<2\varepsilon^{\frac{1}{2}}.$

Put $y_{k}=T_{2}(x_{k}), ~k=1,\ldots,n$ and $T=T_{1}\circ S_{2}$. Then $T$ is a normal u.c.p. map such that
$\tau\circ T=(\tau\circ T_{1})\circ S_{2}=\phi\circ S_{2}=tr_{q}\otimes\tau.$

By the Schwartz inequality for c.p. maps, we have for $x\in M_{m}(\mathbb{C})\otimes N$,
\begin{align*}
\|T_{1}(x)\|_{2}^{2}&\leq \frac{1}{2}\tau(T_{1}(x^{*}x)+T_{1}(xx^{*}))\\
&=(\|x\|_{\phi}^{\sharp})^{2}.
\end{align*}

Note that
\begin{align*}
\|T(y_{k})-T_{1}(x_{k})\|_{2}&=\|T_{1}(S_{2}(y_{k})-x_{k})\|_{2}\\
&\leq \|S_{2}(y_{k})-x_{k}\|_{\phi}^{\sharp}\\
&<2\varepsilon^{\frac{1}{2}}.
\end{align*}
Then we have
$\|T(y_{k})-u_{k}\|_{2}<3\varepsilon^{\frac{1}{2}},~k=1,\ldots,n.$
This ends the proof of Claim A.

By Lemma \ref{3} (1), there is a unique normal u.c.p. map $S$ from $M$ to $M_{q}(\mathbb{C})\otimes N$
such that $(tr_{q}\otimes\tau)(S(y)x^{*})=\tau(yT(x)^{*})$, for $y\in M$, $x\in M_{q}(\mathbb{C})\otimes N,$
and $(tr_{q}\otimes\tau)\circ S=\tau$.

Note that
\begin{align*}
\|T(x)\|_{2}^{2}&\leq \tau(T(x^{*}x))\\
&=(tr_{q}\otimes\tau)(x^{*}x)\\
&=\|x\|_{2}.\\
\end{align*}
Similarly we get $\|S(y)\|_{2}\leq \|y\|_{2}$, $y\in M$.

For $k=1,\ldots,n$,
\begin{align*}
|(tr_{q}\otimes\tau)(S(u_{k})y_{k}^{*})|&=|\tau(u_{k}T(y_{k})^{*})|\\
&=|\tau(1)-\tau(u_{k}(u_{k}-T(y_{k}))^{*})|\\
&\geq 1-\|u_{k}\|_{2}\|u_{k}-T(y_{k})\|_{2}\\
&>1-3\varepsilon^{\frac{1}{2}},\\
\end{align*}

\begin{align*}
\textmd{Im~}\tau(u_{k}T(y_{k})^{*})&=\frac{1}{2}|\tau(u_{k}T(y_{k})^{*})-\tau(u_{k}^{*}T(y_{k}))|\\
&=\frac{1}{2}|\tau(u_{k}(T(y_{k})-u_{k})^{*})-\tau(u_{k}^{*}(T(y_{k})-u_{k}))|\\
&\leq\|T(y_{k})-u_{k}\|_{2}\\
&<3\varepsilon^{\frac{1}{2}}.\\
\end{align*}
Then we conclude that $\textmd{Re~}\tau(u_{k}T(y_{k})^{*})>\sqrt{(1-3\varepsilon^{\frac{1}{2}})^2-(3\varepsilon^{\frac{1}{2}})^2}>1-6\varepsilon^{\frac{1}{2}}$.

Thus, we obtain that
\begin{align*}
\|S(u_{k})-y_{k}\|_{2}^{2}&=\|S(u_{k})\|_{2}^{2}+\|y_{k}\|_{2}^{2}-2\textmd{Re}(tr_{q}\otimes\tau)(S(u_{k})y_{k}^{*})\\
&<2-2(1-6\varepsilon^{\frac{1}{2}})\\
&=12\varepsilon^{\frac{1}{2}}.\\
\end{align*}

Hence,
\begin{align*}
\|T\circ S(u_{k})-u_{k}\|_{2}&=\|T(S(u_{k})-y_{k})\|_{2}+\|T(y_{k})-u_{k}\|_{2}\\
&<4\varepsilon^{1/4}+3\varepsilon^{1/2}.
\end{align*}
\end{proof}

\section{Permanence properties for amenable inclusions}
In this section, we apply our main theorems to study permanence properties for amenable inclusions.

\subsection*{Haagerup property}
In \cite{jo}, it was shown that if the basic construction $\langle M,e_{N}\rangle$ is a finite von Neumann algebra and $N$ has the Haagerup
property, then $M$ also has the Haagerup property. Anantharaman-Delaroche \cite{anan6} showed that if L$H\subset$L$G$ is an amenable inclusion of group von Neumann algebras and L$H$ has the Haagerup property, then L$G$ also has the Haagerup property. In \cite{popa2}, Popa asked if the inclusion of finite von Neumann algebras $N\subset M$ is amenable, and $N$ has the Haagerup property, does $M$ also have the Haagerup property?
Bannon and Fang settled the question in the affirmative in \cite{fang}. Their proof is based on an equivalent characterization of the Haagerup property using correspondences.

Since the definition of the Haagerup property involves normal c.p. maps, it is natural to expect a proof using normal c.p. maps rather than correspondences. As an application of our main results, we can give such a proof of certain cases of Bannon-Fang's result.

\begin{corollary}\label{corollary on permanence results}
Let $M$ be a finite von Neumann algebra (resp. a type II$_{1}$ factor) with a faithful normal tracial state $\tau$, and let $N\subset M$ be a type II$_{1}$ factor (resp. $N$ have an atomic part). If the inclusion $N\subset M$ is amenable and $N$ has the Haagerup property, then $M$ also has the Haagerup property.
\end{corollary}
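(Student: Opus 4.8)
The plan is to combine the trace-preserving factorization supplied by Theorem \ref{diffuse} (when $M$ is a finite von Neumann algebra and $N$ is a type II$_1$ factor) and by Theorem \ref{atomic} (when $M$ is a type II$_1$ factor and $N$ has an atomic part) with a Haagerup net of $N$, transported through the matrix amplification. Fix a finite set $\{x_1,\dots,x_n\}\subset M$ and $\varepsilon>0$. By the relevant theorem there exist $m\in\mathbb{N}$ and normal unital c.p. maps $S\colon M\to M_m(\mathbb{C})\otimes N$ and $T\colon M_m(\mathbb{C})\otimes N\to M$ with $(tr_m\otimes\tau)\circ S=\tau$, $\tau\circ T=tr_m\otimes\tau$, and $\|T\circ S(x_k)-x_k\|_2<\varepsilon$ for all $k$. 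Let $(\phi_i)_{i\in I}$ be a Haagerup net for $N$, so each $\phi_i$ is a normal c.p. map on $N$ with $\tau\circ\phi_i\le\tau$, inducing a compact operator on $L^2(N)$, and with $\|\phi_i(y)-y\|_2\to0$ for all $y\in N$. I would then form the normal c.p. maps
\[
\psi_{i}=T\circ(id_m\otimes\phi_i)\circ S\colon M\to M,
\]
and argue that, as $(\{x_1,\dots,x_n\},\varepsilon)$ and $i$ vary, suitable members of this family witness the Haagerup property of $M$.

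The verification breaks into the three defining conditions. For the trace estimate, because $\tau\circ T=tr_m\otimes\tau$ holds exactly,
\[
\tau\circ\psi_i=(tr_m\otimes\tau)\circ(id_m\otimes\phi_i)\circ S=\bigl(tr_m\otimes(\tau\circ\phi_i)\bigr)\circ S\le(tr_m\otimes\tau)\circ S=\tau,
\]
the inequality coming from $\tau\circ\phi_i\le\tau$ and the positivity of $S$; this is precisely where the trace preservation of $T$ (rather than the weaker conclusion of Proposition \ref{1st theorem}, involving the density $h$) is indispensable. For compactness, the trace conditions together with the Schwartz inequality for unital c.p. maps show that $S$ and $T$ extend to contractions between the respective $L^2$-spaces, while $id_m\otimes\phi_i$ induces on $L^2(M_m(\mathbb{C})\otimes N)\cong M_m(\mathbb{C})\otimes L^2(N)$ an operator of the form $id_{M_m(\mathbb{C})}\otimes(\text{compact})$, which is compact since $M_m(\mathbb{C})$ is finite dimensional. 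Hence the operator induced by $\psi_i$ on $L^2(M)$ factors as contraction--compact--contraction and is itself compact.

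For the approximation, expanding $S(x_k)=\sum_{a,b}e_{ab}\otimes y^{(k)}_{ab}$ with $e_{ab}$ the matrix units of $M_m(\mathbb{C})$ and $y^{(k)}_{ab}\in N$, the pointwise convergence $\|\phi_i(y)-y\|_2\to0$ gives $\|(id_m\otimes\phi_i)(S(x_k))-S(x_k)\|_2\to0$. Since $T$ is an $L^2$-contraction, the triangle inequality yields
\[
\|\psi_i(x_k)-x_k\|_2\le\|(id_m\otimes\phi_i)(S(x_k))-S(x_k)\|_2+\|T\circ S(x_k)-x_k\|_2,
\]
whose second summand is already below $\varepsilon$ and whose first summand is below $\varepsilon$ for $i$ large, giving $\|\psi_i(x_k)-x_k\|_2<2\varepsilon$. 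Indexing over $(\{x_1,\dots,x_n\},\varepsilon)$ then produces a net of normal c.p. maps on $M$ satisfying all three axioms, so $M$ has the Haagerup property. I expect the one genuinely delicate point to be the compactness step, where it is the trace identities that force $S$ and $T$ to be $L^2$-bounded and thereby let compactness of $id_m\otimes\phi_i$ survive the composition; the remaining verifications are routine bookkeeping with the Haagerup axioms.
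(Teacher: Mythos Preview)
Your proposal is correct and follows essentially the same route as the paper: both arguments sandwich a Haagerup map for $M_m(\mathbb{C})\otimes N$ between the trace-preserving maps $S$ and $T$ furnished by Theorem~\ref{diffuse} (resp.\ Theorem~\ref{atomic}), verify the subtracial condition using $\tau\circ T=tr_m\otimes\tau$, and use the $L^2$-contractivity of $T$ to control the approximation error. Your write-up is in fact more explicit than the paper's, spelling out the form $L=id_m\otimes\phi_i$ and the reasons for $L^2$-boundedness and compactness that the paper leaves as ``easy to check.''
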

\begin{proof}
 Let $\{x_{1},\ldots,x_{n}\}$ be a finite set in $M$ and let $\varepsilon>0$. By Theorem \ref{diffuse} ( resp. Theorem \ref{atomic}), there exists an $m\in \mathbb{N}$, and normal u.c.p. maps
$S:M\rightarrow M_{m}(\mathbb{C})\otimes N,\quad T:M_{m}(\mathbb{C})\otimes N\rightarrow M,$
such that $(tr_{m}\otimes\tau)\circ S=\tau,\quad \tau\circ T=tr_{m}\otimes\tau$
and $\|T\circ S(x_{i})-x_{i}\|_{2}<\varepsilon$, $i=1,\ldots,n$. Since $N$ has the Haagerup property, we can find a normal c.p. map $L: M_{m}(\mathbb{C})\otimes N\rightarrow  M_{m}(\mathbb{C})\otimes N$, such that $(tr_{m}\otimes\tau)\circ L\leq tr_{m}\otimes\tau$, $\|L(S(x_{i}))-S(x_{i})\|_{2}<\varepsilon$, $i=1,\ldots,n$, and
$L$ induces a compact bounded operator on $L^{2}(M)$. It is easy to check that $T\circ L\circ S$ satisfies the subtracial condition $\tau\circ T\circ L\circ S\leq \tau$, and it induces a compact bounded operator on $L^{2}(M)$. Moreover, we have
\begin{align*}
 \|T\circ L\circ S(x_{i})-x_{i}\|_{2}&=\|T\circ L\circ S(x_{i})-T\circ S(x_{i})+T\circ S(x_{i})-x_{i}\|_{2}\\
&\leq \|T\|\| L\circ S(x_{i})-S(x_{i})\|_{2}+\|T\circ S(x_{i})-x_{i}\|_{2}\\
&<2\varepsilon.
\end{align*}

Let $\Lambda=\{(E,\varepsilon):~E~\text{is a finite subset in M and~}\varepsilon>0\}$. For $(E,\varepsilon)$, $(F,\epsilon)\in \Lambda$, define $(E,\varepsilon)\prec(F,\epsilon)$ if $E\subseteq F$ and $\varepsilon\geq \epsilon$. Then $\Lambda$ is a directed set. Thus $(T\circ L\circ S_{(\{x_{1},\ldots,x_{n}\},\varepsilon)})_{(\{x_{1},\ldots,x_{n}\},\varepsilon)\in \Lambda}$ is the net which proves the corollary.
\end{proof}

\subsection*{Weak Exactness}
The theory of exact $C^{*}$-algebras was introduced and studied intensively by Kirchberg. It has been playing a significant role in the development of $C^{*}$-algebras, e.g. in the classification of $C^{*}$-algebras (see \cite{kirchberg,rodan}) and in the theory of noncommutative topological entropy (see \cite{Brown,stomer,voiculescu}). Hence it is natural to explore an analogue of this notion for von Neumann algebras. The concept of weakly exact von Neumann algebras was also introduced by Kirchberg \cite{kirchberg}. He proved that a von Neumann algebra $M$ is weakly exact if it contains a dense weakly exact $C^{*}$-algebra. Ozawa in \cite{ozawa} gave a local characterization of weak exactness and proved that a discrete group is exact if and only if its group von Neumann algebra is weakly exact. Weak exactness also passes to a von Neumann subalgebra which is the range of a normal conditional expectation. Hence, every von Neumann subalgebra of a weakly exact finite von Neumann algebra is again weakly exact. It is left open
whether the ultrapower $R^{\omega}$ of the hyperfinite type II$_{1}$ factor $R$ is weakly exact or not. For more details and results on weak exactness, we refer the reader to \cite{ozawabook,ozawa}.

As the second application of our main results Theorem \ref{diffuse} and Theorem \ref{atomic}, we prove a permanence result for weak exactness.
\begin{corollary}\label{weakly}
Let $M$ be a finite von Neumann algebra (resp. a type II$_{1}$ factor) with a faithful normal tracial state $\tau$, and let $N\subset M$ be a type II$_{1}$ factor (resp. $N$ have an atomic part). If the inclusion $N\subset M$ is amenable and $N$ is weakly exact, then $M$ is also weakly exact.
\end{corollary}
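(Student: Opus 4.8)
The plan is to verify the local characterization of weak exactness for $M$ given by Ozawa \cite{ozawa} (together with Remark~\ref{different topologies coincide}, which allows us to work in the pointwise $\|\cdot\|_{2}$-norm topology). Thus, fixing a finite-dimensional operator system $E\subset M$ with spanning set $\{x_{1},\ldots,x_{n}\}$ and $\varepsilon>0$, I must produce a $k\in\mathbb{N}$ and u.c.p. maps $\alpha:E\rightarrow M_{k}(\mathbb{C})$ and $\beta:\alpha(E)\rightarrow M$ for which $\|\beta\circ\alpha(x_{i})-x_{i}\|_{2}$ is small; this is exactly what the two cases of the hypothesis feed into, via Theorem~\ref{diffuse} and Theorem~\ref{atomic} respectively.

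First I would invoke Theorem~\ref{diffuse} (resp. Theorem~\ref{atomic}) applied to $\{x_{1},\ldots,x_{n}\}$ to obtain an $m\in\mathbb{N}$ and normal u.c.p. maps $S:M\rightarrow M_{m}(\mathbb{C})\otimes N$, $T:M_{m}(\mathbb{C})\otimes N\rightarrow M$ that are trace preserving, namely $(tr_{m}\otimes\tau)\circ S=\tau$ and $\tau\circ T=tr_{m}\otimes\tau$, and satisfy $\|T\circ S(x_{i})-x_{i}\|_{2}<\varepsilon$. The trace preservation of $T$ is essential: combined with the Kadison--Schwarz inequality it gives $\|T(y)\|_{2}^{2}\leq\tau(T(y^{*}y))=(tr_{m}\otimes\tau)(y^{*}y)=\|y\|_{2}^{2}$, so $T$ is a $\|\cdot\|_{2}$-contraction. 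This is precisely the point where the strengthening to trace-preserving factorizations (the content of Section~4) is indispensable, and where the cruder Proposition~\ref{1st theorem} would not suffice.

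Next I would transport the weak exactness of $N$ up to $M_{m}(\mathbb{C})\otimes N$. Writing $S(x_{i})=\sum_{s,t}e_{st}\otimes a^{(i)}_{st}$ with $a^{(i)}_{st}\in N$, let $F\subset N$ be the finite-dimensional operator system spanned by $1_{N}$ and the $a^{(i)}_{st}$. Since $N$ is weakly exact, the local characterization \cite{ozawa} yields an $l\in\mathbb{N}$ and u.c.p. maps $\phi:F\rightarrow M_{l}(\mathbb{C})$, $\psi:\phi(F)\rightarrow N$ with $\psi\circ\phi$ close to the identity on each $a^{(i)}_{st}$ in $\|\cdot\|_{2}$. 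Amplifying, the maps $\Phi=id_{M_{m}}\otimes\phi$ and $\Psi=id_{M_{m}}\otimes\psi$ remain u.c.p., with $\Phi$ landing in $M_{m}(\mathbb{C})\otimes M_{l}(\mathbb{C})\cong M_{ml}(\mathbb{C})$, and the computation $\|\Psi\circ\Phi(S(x_{i}))-S(x_{i})\|_{2}^{2}=\tfrac{1}{m}\sum_{s,t}\|\psi\circ\phi(a^{(i)}_{st})-a^{(i)}_{st}\|_{2}^{2}$ shows that, after choosing the tolerance in the $N$-level factorization small enough (of order $\varepsilon/\sqrt{m}$, to absorb the $m^{2}$ matrix entries), we obtain $\|\Psi\circ\Phi(S(x_{i}))-S(x_{i})\|_{2}<\varepsilon$.

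Finally I would set $\alpha=\Phi\circ S|_{E}:E\rightarrow M_{ml}(\mathbb{C})$ and $\beta=T\circ\Psi:\alpha(E)\rightarrow M$, both u.c.p.; here $\alpha(E)=\Phi(S(E))$ lies in the domain $M_{m}(\mathbb{C})\otimes\phi(F)$ of $\Psi$. Using the contraction property of $T$ and the triangle inequality, $\|\beta\circ\alpha(x_{i})-x_{i}\|_{2}\leq\|T(\Psi\circ\Phi\circ S(x_{i})-S(x_{i}))\|_{2}+\|T\circ S(x_{i})-x_{i}\|_{2}\leq\|\Psi\circ\Phi\circ S(x_{i})-S(x_{i})\|_{2}+\varepsilon<2\varepsilon$, which verifies the local characterization and hence that $M$ is weakly exact. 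The main subtlety, and the step I expect to require the most care, is the passage in the third paragraph from the weak exactness of $N$ to an approximate matricial factorization \emph{inside} $M_{m}(\mathbb{C})\otimes N$: the amplification argument handles it, but one must ensure the approximation error is controlled uniformly across all $m^{2}$ entries, which is what dictates shrinking the $N$-level tolerance with $m$.
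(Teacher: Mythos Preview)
Your proof is correct and follows essentially the same approach as the paper: factor through $M_{m}(\mathbb{C})\otimes N$ using the trace-preserving maps of Theorem~\ref{diffuse}/\ref{atomic}, invoke weak exactness at the intermediate level, and compose, using that the trace-preserving u.c.p. map $T$ is a $\|\cdot\|_{2}$-contraction. The only cosmetic difference is that the paper cites \cite[Corollary~14.1.5]{ozawabook} to assert directly that $M_{m}(\mathbb{C})\otimes N$ is weakly exact and then applies Ozawa's local characterization there, whereas you carry out the amplification $id_{M_{m}}\otimes\phi$, $id_{M_{m}}\otimes\psi$ by hand.
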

\begin{proof}
Let $E$ be a finite dimensional operator system in $M$. Since the inclusion $N\subset M$ is amenable, by Theorem  \ref{diffuse} (resp. Theorem \ref{atomic}), there exist two nets of trace preserving normal u.c.p. maps $ S_i:M\rightarrow M_{n_{i}}(\mathbb{C})
\otimes N$ and $ T_i:M_{n_{i}}(\mathbb{C})\otimes N\rightarrow M$, such that for all $x\in M$,
$ T_i\circ S_i(x)\rightarrow x$ in the $\|\cdot\|_{2}$-norm topology. By \cite[Corollary 14.1.5]{ozawabook}, $M_{n_{i}}(\mathbb{C})\otimes N$ is weakly exact. Note that $ S_i(E)\subset \widetilde{E}$ for some finite-dimensional operator system $\widetilde{E}$ in $M_{n_{i}}(\mathbb{C})\otimes N$. By \cite[p.2]{ozawa} and Remark \ref{different topologies coincide}, there exist two nets of u.c.p. maps $ S^{'}_j:\widetilde{E}\rightarrow M_{n_i}(\mathbb{C})$ and $T'_{j}: S^{'}_j(\widetilde{E})\rightarrow M_{n_{i}}(\mathbb{C})\otimes N$ such that the net $( T^{'}_j\circ S^{'}_j)$ converges to $id_{\widetilde{E}}$ in the point-wise $\|\cdot\|_{2}$-norm topology.  For $x\in E$, we have
\begin{align*}
 \| T_i\circ  T^{'}_j\circ  S^{'}_j\circ  S_i(x)-x\|_{2}&\leq \| T_i( T^{'}_j\circ S^{'}_j\circ  S_i(x)- S_i(x))\|_{2}+
 \| T_i\circ  S_i(x)-x\|_{2}\\
&\leq\| T^{'}_j\circ S^{'}_j\circ  S_i(x)- S_i(x)\|_{2}+
 \| T_i\circ  S_i(x)-x\|_{2}\\
&\rightarrow 0.
\end{align*}
The second inequality follows from the fact that $ T_i$ is a trace preserving u.c.p. map.
Thus $(S_i\circ  S^{'}_j)$ and $(T^{'}_j\circ T_i)$ are two nets of u.c.p. maps witnessing the weak exactness of $M$.
\end{proof}

\subsection*{Weak Haagerup property}
In \cite{Knudby}, the author introduced the weak Haagerup property both for locally compact groups and finite von Neumann algebras. He proved that a discrete group has the weak Haagerup property if and only if its group von Neumann algebra does and several hereditary results for the weak Haagerup property. We should mention that the weak Haagerup property of a von Neumann algebra does not depend on the choice of faithful normal traces by \cite[Proposition 8.4]{Knudby}, hence we omit the mention of the trace below.

Note that the weak Haagerup property requires normal completely bounded maps. Our main results give a description of relative amenability using normal unital completely positive maps, which are naturally completely bounded. Thus, as the third application of our main results, we add one more permanence property.
\begin{corollary}\label{weak}
Let $M$ be a finite von Neumann algebra (resp. a type II$_{1}$ factor) with a faithful normal tracial state $\tau$, and let $N\subset M$ be a type II$_{1}$ factor (resp. $N$ have an atomic part). If the inclusion $N\subset M$ is amenable and $N$ has the weak Haagerup property, then $M$ also has the weak Haagerup property.
\end{corollary}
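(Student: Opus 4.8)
The plan is to follow the scheme of Corollary \ref{corollary on permanence results} almost verbatim, sandwiching a map coming from the weak Haagerup property of the intermediate algebra $M_m(\mathbb{C})\otimes N$ between the two trace preserving maps $S$ and $T$ supplied by Theorem \ref{diffuse} (resp. Theorem \ref{atomic}). The only genuinely new point is the symmetry requirement $\langle\phi(x),y\rangle_\tau=\langle x,\phi(y)\rangle_\tau$ in the definition of the weak Haagerup property: in the Haagerup case one only needed the subtracial inequality, which is automatically preserved under composition, whereas here I must produce a composite that is self-adjoint with respect to $\tau$. The key to this is the observation that $S$ may be taken to be the $L^2$-adjoint of $T$. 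Indeed, in the proof of Theorem \ref{diffuse} the map $S$ is manufactured from $T$ via Lemma \ref{3}(1) with $h=1$, which reads precisely $\langle S(y),x\rangle_{tr_m\otimes\tau}=\langle y,T(x)\rangle_\tau$; and in the atomic case $S=S_2\circ S_1$, $T=T_1\circ T_2$ are compositions of mutually $L^2$-adjoint pairs. So in either case I would record that $S=T^*$ and $T=S^*$ as operators between $L^2(M)$ and $L^2(M_m(\mathbb{C})\otimes N)$.

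With this in hand, I would fix $\{x_1,\ldots,x_n\}\subset M$ and $\varepsilon>0$, apply the relevant theorem to get $m$ and trace preserving normal u.c.p. maps $S,T$ with $S=T^*$ and $\|T\circ S(x_k)-x_k\|_2<\varepsilon$, and then invoke the weak Haagerup property of $M_m(\mathbb{C})\otimes N$. Since $M_m(\mathbb{C})$ is finite dimensional, the weak Haagerup constant of $M_m(\mathbb{C})\otimes N$ is bounded by that of $N$, uniformly in $m$, by the permanence results of \cite{Knudby}; this is what lets me keep a single constant $C$. I would thus obtain a normal completely bounded map $L$ on $M_m(\mathbb{C})\otimes N$ with $\|L\|_{c.b.}\le C$, symmetric with respect to the trace, inducing a compact operator on $L^2$, and with $\|L(S(x_k))-S(x_k)\|_2<\varepsilon$, and then set $\Phi=T\circ L\circ S$.

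Verifying the four conditions for $\Phi$ is then routine. It is normal and completely bounded with $\|\Phi\|_{c.b.}\le\|T\|_{c.b.}\|L\|_{c.b.}\|S\|_{c.b.}\le C$, because the u.c.p. maps $S,T$ are completely contractive; it induces a compact operator on $L^2(M)$, being the product of the bounded $L^2$-contractions induced by $S,T$ with the compact operator induced by $L$; and the estimate $\|\Phi(x_k)-x_k\|_2\le\|L(S(x_k))-S(x_k)\|_2+\|T\circ S(x_k)-x_k\|_2<2\varepsilon$ (using $\|T\|\le 1$ on $L^2$) gives the pointwise approximation. The symmetry is where $S=T^*$ enters: for $a,b\in M$,
\begin{align*}
\langle\Phi(a),b\rangle_\tau&=\langle L(S(a)),S(b)\rangle_{tr_m\otimes\tau}\\
&=\langle S(a),L(S(b))\rangle_{tr_m\otimes\tau}=\langle a,\Phi(b)\rangle_\tau,
\end{align*}
using $T^*=S$, then $L^*=L$, then $S^*=T$.

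Finally I would index these maps by the directed set of pairs (finite subset, tolerance) exactly as in Corollary \ref{corollary on permanence results}, obtaining a net of normal completely bounded maps with completely bounded norms at most $C$, symmetric with respect to $\tau$, each inducing a compact operator on $L^2(M)$, and converging to the identity pointwise in $\|\cdot\|_2$; this witnesses the weak Haagerup property of $M$. The main obstacle is the first paragraph — arranging and justifying $S=T^*$ so that the symmetry condition survives the two outer compositions; everything downstream is bookkeeping essentially identical to the Haagerup case.
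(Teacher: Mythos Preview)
Your approach is correct, and the key observation that $S$ and $T$ can be taken to be $L^2$-adjoints of each other is valid: in the proof of Theorem~\ref{diffuse} the final $S$ is produced from $T$ via Lemma~\ref{3}(1) with $h=1$, which is exactly the adjoint relation, and in Theorem~\ref{atomic} one has $S_2^*=T_2$ by direct computation while $S_1^*=T_1$ holds because Haagerup's construction in \cite[Proposition~3.5]{haagerup} has the same structure. With $S=T^*$ in hand, $\Phi=T\circ L\circ S$ is automatically $\tau$-symmetric and the rest is bookkeeping, as you say.

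The paper takes a different route that avoids opening up the proofs of Theorems~\ref{diffuse} and~\ref{atomic}. It treats $S$ and $T$ as black boxes (trace-preserving normal u.c.p.\ maps, nothing more), invokes \cite[Lemma~2.5]{anan7} to obtain abstract $L^2$-adjoints $S':M_m(\mathbb{C})\otimes N\to M$ and $T':M\to M_m(\mathbb{C})\otimes N$, and then \emph{symmetrizes} by setting $\widetilde{T}=\tfrac{1}{2}(T\circ L\circ S+S'\circ L\circ T')$. Since $(T\circ L\circ S)^*=S'\circ L\circ T'$, the average is $\tau$-symmetric regardless of whether $S'=T$ and $T'=S$. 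The approximation is then checked only weakly, via inner products $|\langle\widetilde{T}(x_i)-x_i,x_j\rangle_\tau|<2\varepsilon$, rather than in $\|\cdot\|_2$.

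Your argument is cleaner and gives the stronger $\|\cdot\|_2$-convergence directly, but it relies on the particular construction of $S$ and $T$; the paper's argument is more robust in that it would apply to \emph{any} pair of trace-preserving normal u.c.p.\ maps realizing the approximate factorization, at the cost of an averaging step. If $S=T^*$ is granted, the paper's $\widetilde{T}$ collapses to your $\Phi$, so the two proofs ultimately coincide.
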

\begin{proof}
Let $\{x_{1},\ldots,x_{n}\}$ be a finite set in the unit ball of $M$ and let $\varepsilon>0$. By Theorem \ref{diffuse} (resp. Theorem \ref{atomic}), there exists an $m\in \mathbb{N}$, and two normal u.c.p. maps
$S:M\rightarrow M_{m}(\mathbb{C})\otimes N,$ $T:M_{m}(\mathbb{C})\otimes N\rightarrow M,$
such that $(tr_{m}\otimes\tau)\circ S=\tau$, $\tau\circ T=tr_{m}\otimes\tau$ and $\|T\circ S(x_{k})-x_{k}\|_{2}<\varepsilon,~k=1,\ldots,n$.
By \cite[Lemma 2.5]{anan7}, there exist two normal u.c.p. maps $S': M_m(\mathbb{C})\otimes N\to M$ and $T': M\to M_{m}(\mathbb{C})\otimes N$ such that $\langle S(x),a \rangle_{tr_m\otimes \tau}=\langle x, S'(a) \rangle_{\tau}$ and $\langle T(a), y\rangle_{\tau}=\langle a, T'(y) \rangle_{tr_m\otimes \tau}$ for all $x, y\in M$ and $a\in M_m(\mathbb{C})\otimes N$. Since $N$ has the weak Haagerup property, there exists a constant $C>0$ and a normal completely bounded map $L$ on $M_m(\mathbb{C})\otimes N$ with $\|L\|_{c.b.}\leq C$ such that $\langle L(a),b\rangle_{tr_m\otimes \tau}=\langle a,L(b)\rangle_{tr_m\otimes \tau}$ for $a,~b\in M_m(\mathbb{C})\otimes N$, $L$ induces a compact bounded map on $L^2(M_m(\mathbb{C})\otimes N)$, and for $i,j=1,\ldots,n$, $|\langle L\circ S(x_{i})- S(x_{i}),T'(x_{j})\rangle_{tr_{m}\otimes\tau}|<\varepsilon$, $|\langle L\circ T'(x_{i})- T'(x_{i}),S(x_{j})\rangle_{tr_{m}\otimes\tau}|<\varepsilon$ following from \cite[Remark 7.5]{Knudby}.

Define $\widetilde{T}=\frac{1}{2}(T\circ L\circ S+S'\circ L\circ T')$.
It is clear that $\widetilde{T}$ is a normal completely bounded map with $\|\widetilde{T}\|_{c.b.}\leq C$, since $T,T',S,S'$ are normal u.c.p. maps and $L$ is a normal completely bounded map with $\|L\|_{c.b.}\leq C$.

We check that $\langle \widetilde{T}(x), y\rangle_{\tau}=\langle x, \widetilde{T}(y)\rangle_{\tau}$ for $x,y\in M$.
Note that
\begin{align*}
 \langle T\circ L\circ S(x),y \rangle_{\tau}&=\langle L\circ S(x),T'(y) \rangle_{tr_{m}\otimes\tau}\\
&=\langle S(x),L\circ T'(y) \rangle_{tr_{m}\otimes\tau}\\
&=\langle x,S'\circ L\circ T'(y) \rangle_{\tau}.
\end{align*}
Clearly, this implies $\langle \widetilde{T} (x), y\rangle_{\tau}=\langle x, \widetilde{T}(y)\rangle_{\tau}$ for $x,y\in M$.
It is easy to see that $\widetilde{T}$ induces a compact operator on $L^{2}(M)$, since $L$ induces a compact operator.

We check that $|\langle\widetilde{T}x_{i}- x_{i},x_{j}\rangle_{\tau}|<2\varepsilon$ for $i,j=1,\ldots,n$.
Since $\|T\circ S(x_{i})-x_{i}\|_{2}<\varepsilon$ and $x_{i}$ is in the unit ball of $M$, it follows that $|\langle T\circ S (x_{i})- x_{i},x_{j}\rangle_{\tau}|<\varepsilon$, for $i,j=1,\ldots,n$.

Thus we have
\begin{align*}|\langle T\circ L\circ S(x_{i})-x_{i},x_{j} \rangle_{\tau}|&=|\langle L\circ S(x_{i}),T'(x_{j}) \rangle_{tr_{m}\otimes\tau}-\langle x_{i},x_{j} \rangle_{\tau}|\\
&\leq|\langle L\circ S(x_{i})- S(x_{i}),T'(x_{j})\rangle_{tr_{m}\otimes\tau}+\langle S(x_{i}),T'(x_{j})\rangle_{tr_{m}\otimes\tau}\\
&\quad-\langle x_{i},x_{j} \rangle_{\tau}|\\
&<2\varepsilon.
\end{align*}
Similarly,
\begin{align*}
 |\langle S'\circ L\circ T'(x_{i})-x_{i},x_{j} \rangle_{\tau}|&=|\langle L\circ T'(x_{i}),S(x_{j}) \rangle_{tr_{m}\otimes\tau}-\langle x_{i},x_{j} \rangle_{\tau}|\\
&\leq|\langle L\circ T'(x_{i})- T'(x_{i}),S(x_{j})\rangle_{tr_{m}\otimes\tau}\\
&\quad+\langle T'(x_{i}),S(x_{j})\rangle_{tr_{m}\otimes\tau}-\langle x_{i},x_{j} \rangle_{\tau}|\\
&<2\varepsilon.
\end{align*}
Let $\Lambda=\{(E,\varepsilon):~E~\text{is a finite subset in the unit ball of M and~}\varepsilon>0\}$. For $(E,\varepsilon)$, $(F,\epsilon)\in \Lambda$, define $(E,\varepsilon)\prec(F,\epsilon)$ if $E\subseteq F$ and $\varepsilon\geq \epsilon$. Then $\Lambda$ is a directed set. Thus $(\widetilde{T}_{(\{x_{1},\ldots,x_{n}\},\varepsilon)})_{(\{x_{1},\ldots,x_{n}\},\varepsilon)\in \Lambda}$ is the net which proves the corollary.
\end{proof}

\subsection*{Concluding remark}
Recall that a type II$_{1}$ factor $M$ with a trace $\tau$ is said to have property $\Gamma$ if, given any $\varepsilon>0$ and $x_{1},\ldots,x_{n}\in M$, there exists a trace zero unitary $u\in M$ such that $\|ux_{i}-x_{i}u\|_{2}<\varepsilon$, $1\leq i\leq n$.
In \cite[Problem 3.3.2]{popa}, Popa asked, if $N\subset M$ are type II$_{1}$ factors with trace $\tau$, the inclusion $N\subset M$ is amenable, and $N$ has property $\Gamma$, does this imply that $M$ has property $\Gamma$? In \cite{anan3}, B\'edos proved that if $G$ is a discrete amenable group with a free action $\alpha$ on a von Neumann algebra $N$ and $N$ has property $\Gamma$, then $M:=N\rtimes_{\alpha}G$ has property $\Gamma$.
We tried to use our Theorem \ref{diffuse} to attack this problem, but did not succeed. The reason is as follows. Following the above ideas, assume $x_{1},\ldots,x_{n}$ are finite elements in the unit ball of $M$. By Theorem \ref{diffuse}, for any $\varepsilon>0$, there exists an $m\in \mathbb{N}$, and two normal u.c.p. maps
$S:M\rightarrow M_{m}(\mathbb{C})\otimes N,$ $T:M_{m}(\mathbb{C})\otimes N\rightarrow M,$
such that $(tr_{m}\otimes\tau)\circ S=\tau$, $\tau\circ T=tr_{m}\otimes\tau$ and $\|T\circ S(x_{k})-x_{k}\|_{2}<\varepsilon,~k=1,\ldots,n$. Since $N$ has property $\Gamma$, we can find a unitary operator $\tilde{u}\in M_{m}(\mathbb{C})\otimes N$  with $(tr_{m}\otimes \tau)(\tilde{u})=0$ such that $\|S(x_{i})\tilde{u}-\tilde{u}S(x_{i})\|_{2}<\varepsilon$. It follows that $\|T(S(x_{i})\tilde{u}-\tilde{u}S(x_{i}))\|_{2}<\varepsilon$ and $\tau\circ T(\tilde{u})=(tr_{m}\otimes \tau)(\tilde{u})=0$, since $T$ is a trace preserving normal u.c.p map. Then, we run into two problems. One is that this normal u.c.p. map $T$ is not a homomorphism on the algebra $M_{m}(\mathbb{C})\otimes N$. If so, then we would have $\|x_{i}T(\tilde{u})-T(\tilde{u})x_{i}\|_{2}<2\varepsilon$, $1\leq i\leq n$ and $\tau\circ T(\tilde{u})=0$, but we don't know this $T(\tilde{u})$ is a unitary operator or not, or it can be approximated by trace zero unitaries in $M$.

\proof[Acknowledgements]
The first author would like to thank Yongle Jiang for providing several helpful suggestions and comments.
The second author was supported by the Project sponsored by the NSFC grant 11431011 and startup funding from Hebei Normal University.
The authors would like to thank the referee for several useful comments.


\bibliographystyle{amsplain}

\end{document}